\numberwithin{equation}{section}
\theoremstyle{plain}
\newtheorem{lemma}{Lemma}[section]
\newtheorem{proposition}[lemma]{Proposition}
\newtheorem{theorem}[lemma]{Theorem}
\newtheorem{corollary}[lemma]{Corollary}
\theoremstyle{definition}
\newtheorem{definition}[lemma]{Definition}
\newtheorem{remark}[lemma]{Remark}
\newtheorem{example}[lemma]{Example}
\begin{document}
\newcommand{\R}{{\mathbb R}}
\newcommand{\C}{{\mathbb C}}
\newcommand{\F}{{\mathbb F}}
\renewcommand{\O}{{\mathbb O}}
\newcommand{\Z}{{\mathbb Z}} 
\newcommand{\N}{{\mathbb N}}
\newcommand{\Q}{{\mathbb Q}}
\renewcommand{\H}{{\mathbb H}}

\newcommand{\Aa}{{\mathcal A}}
\newcommand{\Bb}{{\mathcal B}}
\newcommand{\Cc}{{\mathcal C}}    
\newcommand{\Dd}{{\mathcal D}}
\newcommand{\Ee}{{\mathcal E}}
\newcommand{\Ff}{{\mathcal F}}
\newcommand{\Gg}{{\mathcal G}}    
\newcommand{\Hh}{{\mathcal H}}
\newcommand{\Kk}{{\mathcal K}}
\newcommand{\Ii}{{\mathcal I}}
\newcommand{\Jj}{{\mathcal J}}
\newcommand{\Ll}{{\mathcal L}}    
\newcommand{\Mm}{{\mathcal M}}    
\newcommand{\Nn}{{\mathcal N}}
\newcommand{\Oo}{{\mathcal O}}
\newcommand{\Pp}{{\mathcal P}}
\newcommand{\Qq}{{\mathcal Q}}
\newcommand{\Rr}{{\mathcal R}}
\newcommand{\Ss}{{\mathcal S}}
\newcommand{\Tt}{{\mathcal T}}
\newcommand{\Uu}{{\mathcal U}}
\newcommand{\Vv}{{\mathcal V}}
\newcommand{\Ww}{{\mathcal W}}
\newcommand{\Xx}{{\mathcal X}}
\newcommand{\Yy}{{\mathcal Y}}
\newcommand{\Zz}{{\mathcal Z}}

\newcommand{\zt}{{\tilde z}}
\newcommand{\xt}{{\tilde x}}
\newcommand{\Ht}{\widetilde{H}}
\newcommand{\ut}{{\tilde u}}
\newcommand{\Mt}{{\widetilde M}}
\newcommand{\Llt}{{\widetilde{\mathcal L}}}
\newcommand{\yt}{{\tilde y}}
\newcommand{\vt}{{\tilde v}}
\newcommand{\Ppt}{{\widetilde{\mathcal P}}}
\newcommand{\bp }{{\bar \partial}} 

\newcommand{\Remark}{{\it Remark}}
\newcommand{\Proof}{{\it Proof}}
\newcommand{\ad}{{\rm ad}}
\newcommand{\Om}{{\Omega}}
\newcommand{\om}{{\omega}}
\newcommand{\eps}{{\varepsilon}}
\newcommand{\Di}{{\rm Diff}}
\newcommand{\Pro}[1]{\noindent {\bf Proposition #1}}
\newcommand{\Thm}[1]{\noindent {\bf Theorem #1}}
\newcommand{\Lem}[1]{\noindent {\bf Lemma #1 }}
\newcommand{\An}[1]{\noindent {\bf Anmerkung #1}}
\newcommand{\Kor}[1]{\noindent {\bf Korollar #1}}
\newcommand{\Satz}[1]{\noindent {\bf Satz #1}}

\renewcommand{\a}{{\mathfrak a}}
\renewcommand{\b}{{\mathfrak b}}
\newcommand{\e}{{\mathfrak e}}
\renewcommand{\k}{{\mathfrak k}}
\newcommand{\pg}{{\mathfrak p}}
\newcommand{\g}{{\mathfrak g}}
\newcommand{\gl}{{\mathfrak gl}}
\newcommand{\h}{{\mathfrak h}}
\renewcommand{\l}{{\mathfrak l}}
\newcommand{\sm}{{\mathfrak m}}
\newcommand{\n}{{\mathfrak n}}
\newcommand{\s}{{\mathfrak s}}
\renewcommand{\o}{{\mathfrak o}}
\newcommand{\so}{{\mathfrak so}}
\renewcommand{\u}{{\mathfrak u}}
\newcommand{\su}{{\mathfrak su}}
\newcommand{\ssl}{{\mathfrak sl}}
\newcommand{\ssp}{{\mathfrak sp}}
\renewcommand{\t}{{\mathfrak t }}
\newcommand{\Cinf}{C^{\infty}}
\newcommand{\la}{\langle}
\newcommand{\ra}{\rangle}
\newcommand{\half}{\scriptstyle\frac{1}{2}}
\newcommand{\p}{{\partial}}
\newcommand{\notsub}{\not\subset}
\newcommand{\iI}{{I}}               
\newcommand{\bI}{{\partial I}}      
\newcommand{\LRA}{\Longrightarrow}
\newcommand{\LLA}{\Longleftarrow}
\newcommand{\lra}{\longrightarrow}
\newcommand{\LLR}{\Longleftrightarrow}
\newcommand{\lla}{\longleftarrow}
\newcommand{\INTO}{\hookrightarrow}

\newcommand{\QED}{\hfill$\Box$\medskip}
\newcommand{\UuU}{\Upsilon _{\delta}(H_0) \times \Uu _{\delta} (J_0)}
\newcommand{\bm}{\boldmath}

\title[Geometric structures associated with a  Cartan 3-form]{\large   Geometric structures  associated with
a simple Cartan 3-form}
\author{H\^ong V\^an L\^e} 

\medskip

\abstract We introduce the notion of a manifold admitting a simple compact Cartan 3-form $\om^3$.  We study algebraic types of such manifolds, in particular  those  having skew-symmetric torsion, or those  associated with a closed or coclosed  3-form $\om^3$. We prove the existence of an algebra of multi-symplectic forms $\phi^l$ on these manifolds. Cohomology groups associated with   complexes   of differential forms on $M^n$ in presence of such a closed multi-symplectic form $\phi^l$ and their relations with the de Rham cohomologies of $M$ are investigated. We show  rigidity of a class of strongly associative (resp. strongly coassociative) submanifolds. We include an appendix describing  all connected simply connected
 complete Riemannian manifolds admitting a parallel 3-form.
\endabstract
\maketitle
\tableofcontents

AMSC2010: 53C10, 53C2, 53C38.

{\it Key words: Cartan 3-form, multi-symplectic form, $G$-structure}

\section{Introduction}

Let $G$ be a  Lie subgroup  of  $O(n)$. We want to characterize a class of ``natural" $G$-structures on  Riemannian manifolds $M^n$.  First, we  would like to see
$G$  in the list of  possible holonomy  groups of  Riemannian manifolds  $M^n$.  Second, we  also like to characterize
$G$ as  the    stabilizer group  of some exterior $k$-form on $\R^n$ (as it is the case with  most of  special holonomy
groups of Riemannian manifolds, see  \cite[table 1, chapter 10]{Besse1987}).  Note that  the holonomy group of a Riemannian  manifold  $M^n$  lies in such a 
``natural" group $G \subset  O(n)$  only if $M^n$ admits a parallel $k$-form $\phi^k$, which  makes $M^n$ a  manifold
with calibration.   
A careful analysis shows that  a  connected simply connected complete Riemannian manifold $M^n$ admits a parallel
3-form $\phi^3$, if and only if $(M^n, \phi^3)$ is a product of basis   Riemannian manifolds $(M^i, \phi_i ^3)$, where
either $\phi^3_i = 0$, or $M^i$ is flat and $\phi^3_i$ is a parallel form,  or $\phi^3_i$ is one of  stable 3-forms in dimensions 6,7,8, or $\phi^3$ is a wedge product of a K\"ahler 2-form with
a 1-form, or $\phi^3$ is a Cartan 3-form associated with a simple compact Lie group, see Theorem \ref{3para} for a precise  formulation.

This motivates  us to study geometry associated with a Cartan 3-form.  It turns out that these manifolds possess  very
rich geometric structures, arising from the cohomological structure of the associated Lie algebra.
 Our study can be thought as a continuation of   the study initiated by Hitchin of  geometries associated with  stable 3-forms  in dimension 6, 7, 8  \cite{Hitchin2000}, \cite{Hitchin2001}, which includes the  Special Lagrangian (SL) 3-form, the 3-form of $G_2$-type, and  the Cartan 3-form on $\su(3)$.
On the other hand, our study provides  new examples  and some structure theorems   for the theory of manifolds supplied with a closed multi-symplectic form,  which has been discovered long time ago  in relation with
the multi-variate field theory \cite{GS1977}, and enjoys its active development nowadays  \cite{BHR2010}, \cite{MS2010}.  

The plan of our  note is as follows.  In section 2 we  recall the definition of a  Cartan 3-form $\om _\g$ and show
that it is multi-symplectic if and only if $\g$ is semisimple  (Lemma \ref{multi}). We  compute  the stabilizer of $\om_\g$ in the case  that $\g$ is a simple Lie algebra over $\C$ or a real form
of a simple complex Lie algebra  (Theorem \ref{dyn1}). In section 3  we present many examples of  manifolds provided with a simple Cartan 3-form.
In section 4, using the notion of intrinsic torsion, we prove several structure theorems on algebraic types of a manifold provided with a 
compact simple Cartan form $\om^3$, especially  on those algebraic types having  skew-symmetric torsions, and those associated with  a closed or coclosed
3-form $\om ^3$ of type $\om_\g$ (Proposition \ref{main1}, Theorem \ref{main2}, Lemma \ref{kth}, Corollary \ref{skew2}). We end  this section with a  theorem describing  torsion-free complete $Aut(\g)$-manifolds  (Theorem \ref{tor}).  In section 5  we show the existence of an algebra of nowhere vanishing multi-symplectic forms $\phi^l$
on an orientable manifold  $M^n$ equipped with a compact Cartan 3-form  (Theorem \ref{main3} and Lemma \ref{exi2}). We study  cohomology groups associated
with a closed multi-symplectic form $\phi^l$  of the considered type and their relations with the de Rham cohomology groups of $M^n$ (Theorem \ref{spec2}, Example \ref{csu3}, Proposition \ref{har2}, Lemma \ref{har1}).  In section 6 we study   a class of strongly associative (or strongly coassociative) submanifolds in a manifold
$M^n$ provided with  a compact simple Cartan  3-form   and prove their algebraic and geometric rigidity  (Propositions \ref{main5},  \ref{triv4}, Proposition \ref{triv5}, Proposition \ref{triv6}, Remark \ref{liu})).  In section 7 we discuss some questions for further research.  In  the appendix we describe  simply-connected  complete Riemannian  manifolds admitting a parallel 
3-form.

\section{Cartan 3-form  $\om_\g$  and its stabilizer group}

In this section we recall the definition of  the Cartan 3-form  associated with a semisimple Lie algebra
$\g$. We show that  a Cartan 3-form is multi-sympletic  (Lemma \ref{multi}). We   compute
the stabilizer group of the Cartan  3-form in the case that $\g$ is a simple Lie algebra over $\C$, or
a real form of a  simple Lie algebra over $\C$, see Theorem \ref{dyn1}.  We discuss a generalization of Theorem \ref{dyn1}
in Remark \ref{hit}.3.
 
Let $\g$ be a semisimple Lie algebra  over $\C$ or over $\R$.  The Cartan 3-form $\om_\g$
is defined on $\g$ as follows
$$\om_\g (X, Y, Z) := \la X, [Y,Z]\ra ,$$
where $\la , \ra$ denotes the  Killing  bilinear form on $\g$.

Let  $\F$ be  field  $\C$ or $\R$. We recall that a $k$-form  $\om$ on $\F^n$ is {\it multi-symplectic},
if  the linear map
\begin{equation}
L_\om: \F ^n \to  \Lambda ^{k-1} ( \F^n ) ^*,  v \mapsto v \rfloor  \om ,\label{multi}
\end{equation}
is an injective map.

\begin{lemma}\label{multi}  The Cartan 3-form $\om _\g$ is multi-symplectic, if and only if $\g$ is semismple.
\end{lemma}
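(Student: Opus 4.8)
The plan is to translate the injectivity of $L_{\om_\g}$ directly into a statement about the Killing form, and then invoke Cartan's criterion for semisimplicity. First I would unwind the definition: for $v \in \g$ the $2$-form $v \rfloor \om_\g$ takes the value
\[
(v \rfloor \om_\g)(Y,Z) = \om_\g(v,Y,Z) = \la v, [Y,Z]\ra
\]
on a pair $(Y,Z)$. Hence $v \in \ker L_{\om_\g}$ precisely when $\la v, [Y,Z]\ra = 0$ for all $Y,Z \in \g$, that is, when $v$ is Killing-orthogonal to the derived subalgebra $[\g,\g]$. In other words, $\ker L_{\om_\g} = [\g,\g]^{\perp}$, where $\perp$ is taken with respect to the Killing form. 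So the whole lemma reduces to deciding when this orthogonal complement is trivial.

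For the ``if'' direction I would argue as follows. If $\g$ is semisimple, then $[\g,\g] = \g$, and by Cartan's criterion the Killing form is non-degenerate. Therefore $\ker L_{\om_\g} = [\g,\g]^{\perp} = \g^{\perp} = 0$, so $L_{\om_\g}$ is injective and $\om_\g$ is multi-symplectic. For the converse, suppose $\g$ is not semisimple. By Cartan's criterion the Killing form is then degenerate, so its radical $\g^{\perp} = \{v : \la v, w\ra = 0 \text{ for all } w \in \g\}$ is a nonzero subspace of $\g$. Since $[\g,\g] \subseteq \g$, any element of $\g^{\perp}$ is in particular orthogonal to $[\g,\g]$, whence $\g^{\perp} \subseteq [\g,\g]^{\perp} = \ker L_{\om_\g}$. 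Thus $\ker L_{\om_\g} \neq 0$ and $L_{\om_\g}$ fails to be injective, so $\om_\g$ is not multi-symplectic.

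There is essentially no serious obstacle here once the kernel is identified with $[\g,\g]^{\perp}$; the proof is a direct application of Cartan's criterion. The one point worth handling carefully is the converse direction, where I would resist the temptation to compute $[\g,\g]^{\perp}$ exactly (which is delicate when the form is degenerate) and instead use only the easy inclusion $\g^{\perp} \subseteq [\g,\g]^{\perp}$, so that the nonvanishing of the Killing radical immediately forces a nonzero kernel. This keeps the argument uniform over both $\F = \R$ and $\F = \C$, since Cartan's criterion holds in either case.
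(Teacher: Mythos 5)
Your proof is correct and follows essentially the same route as the paper: the ``if'' direction combines $[\g,\g]=\g$ with the non-degeneracy of the Killing form, and the converse rests on the same key inclusion of the Killing radical into $\ker L_{\om_\g}$ (you state it contrapositively, the paper directly, but the argument is identical). Your explicit identification $\ker L_{\om_\g} = [\g,\g]^{\perp}$ is a slightly more detailed packaging of what the paper leaves implicit, but not a different method.
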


\begin{proof}  Assume that $\g$ is semisimple,  then $[\g, \g ] = \g$. Taking into account the non-degeneracy of  the  Killing bilinear form on $\g$, we get  immediately the ``if" assertion.   Now assume that $\om_\g$ is multi-simplectic.
Then the Killing form $\la, \ra$ is non-degenerate, since the kernel of the Killing form lies in the kernel 
of $L_{\om_\g}$. This  proves the ``only if" assertion.
\end{proof}

Next we note that  the stabilizer group $Stab (\om_\g)$ of $\om_\g$ contains the  automorphism  group $Aut (\g)$  of the Lie algebra $\g$.  


\begin{theorem}\label{dyn1}(cf. \cite{Freudenthal1953}, \cite[Theorem 7]{Kable2009}) 1. Let $\g$ be a simple  Lie algebra over $\C$. 
Then $Stab (\om_\g) =   Aut(\g)\times \Z_3$ if $\dim \g >3$.  If $\dim \g = 3$ then $Stab (\om_\g) =  SL(\g)$.\\
2. Let  $\g$ be a real form of  a complex simple Lie algebra over $\C$.  Then $Stab (\om _\g) =   Aut(\g)$, if
$\dim \g \ge 3$. If $\dim \g = 3$, then $Stab(\om_\g) = SL(\g)$.
\end{theorem}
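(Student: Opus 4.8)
The plan is to convert the pointwise invariance condition into an identity for the Lie bracket via the nondegeneracy of the Killing form (Lemma \ref{multi}), and then to show that the Killing-symmetric part of any stabilizing map is forced to be a scalar by the simplicity of $\g$. Throughout, for $A\in GL(\g)$ let $A^{\top}$ denote the adjoint with respect to the Killing form, so that $\langle AX,Y\rangle=\langle X,A^{\top}Y\rangle$. By definition $A\in Stab(\om_\g)$ means $\langle AX,[AY,AZ]\rangle=\langle X,[Y,Z]\rangle$ for all $X,Y,Z$; rewriting the left-hand side as $\langle X,A^{\top}[AY,AZ]\rangle$ and using nondegeneracy of $\langle\cdot,\cdot\rangle$, this is equivalent to
\begin{equation}\tag{$\heartsuit$}
[AY,AZ]=(A^{\top})^{-1}[Y,Z]\qquad\text{for all }Y,Z .
\end{equation}
Setting $N:=(A^{\top}A)^{-1}$, which is $\langle\cdot,\cdot\rangle$-symmetric, the relation $(\heartsuit)$ gives $\ad_{AX}=(A^{\top})^{-1}\ad_X A^{-1}$, and a short computation using $[\ad_U,\ad_V]=\ad_{[U,V]}$ turns this into the operator identity
\begin{equation}\tag{$\star$}
\ad_{N[X,Y]}=\ad_X\,N\,\ad_Y-\ad_Y\,N\,\ad_X\qquad\text{for all }X,Y .
\end{equation}

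The dimension-three case is immediate and is where the exception comes from. If $\dim\g=3$, then $\Lambda^3\g^*$ is one-dimensional over the ground field and $\om_\g\neq 0$ spans it, so evaluating $A\in GL(\g)$ on a top form gives $\om_\g(AX,AY,AZ)=(\det A)\,\om_\g(X,Y,Z)$. Hence $A$ preserves $\om_\g$ if and only if $\det A=1$, i.e. $Stab(\om_\g)=SL(\g)$, in both the complex and the real cases.

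The main step, valid for $\dim\g>3$ (so $\ad(\g)$ acts irreducibly), is to deduce from $(\star)$ that $N$ is a scalar multiple of the identity; equivalently, that $T=A^{\top}A$ commutes with every $\ad_X$, after which Schur's lemma together with the $\langle\cdot,\cdot\rangle$-symmetry of $T$ forces $T$ to be scalar. I would extract this by testing $(\star)$ on a root-space decomposition of $\g$ (of its complexification in the real case): choosing $X,Y$ among Cartan and root vectors makes $(\star)$ a system of linear relations among the matrix entries of $N$, whose only solutions are the scalars once $\dim\g>3$. This is the computational heart of the theorem and the only place where simplicity is genuinely used; it is precisely the content located in the works of Freudenthal and Kable cited above, and the detailed verification can be quoted from there. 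I expect this to be the main obstacle, since the outer reduction and the bookkeeping below are purely formal.

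Once $T=A^{\top}A=c\,\mathrm{Id}$, we have $(A^{\top})^{-1}=\mu A$ with $\mu=c^{-1}$, so $(\heartsuit)$ reads $[AY,AZ]=\mu\,A[Y,Z]$. Then $\phi:=\mu^{-1}A$ satisfies $[\phi Y,\phi Z]=\phi[Y,Z]$, i.e. $\phi\in Aut(\g)$, and $A=\mu\phi$. Evaluating the invariant form, $\om_\g(AX,AY,AZ)=\mu^3\,\om_\g(\phi X,\phi Y,\phi Z)=\mu^3\,\om_\g(X,Y,Z)$ must equal $\om_\g(X,Y,Z)$, so $\mu^3=1$. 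Over $\R$ this forces $\mu=1$, hence $A=\phi\in Aut(\g)$ and $Stab(\om_\g)=Aut(\g)$. Over $\C$ all three cube roots occur; since the only scalar automorphism is the identity, the central subgroup $\{\mu\,\mathrm{Id}:\mu^3=1\}\cong\Z_3$ meets $Aut(\g)$ trivially and commutes with it, giving $Stab(\om_\g)=Aut(\g)\times\Z_3$.
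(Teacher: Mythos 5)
Your formal reductions all check out: the equivalence of $A\in Stab(\om_\g)$ with $(\heartsuit)$, the derivation of the operator identity $(\star)$ for $N=(A^{\top}A)^{-1}$, the three-dimensional case via $\dim\Lambda^3(\g^*)=1$, and the endgame: once $A^{\top}A=c\,\mathrm{Id}$ one gets $A=\mu\phi$ with $\phi\in Aut(\g)$, $\mu^3=1$, whence $Stab(\om_\g)=Aut(\g)$ over $\R$ and $Aut(\g)\times\Z_3$ over $\C$. This last part is in substance the same bookkeeping the paper performs with $\Z_3=\Zz(GL(\g))\cap Stab(\om_\g)$ and the observation that the only scalar automorphism is the identity, and you carry it out correctly.

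The genuine gap is the step you yourself flag as the heart: that $(\star)$ forces $N$ scalar when $\dim\g>3$ is asserted, not proved, and your claim that the verification ``can be quoted'' from Freudenthal or Kable does not hold in the form you need. Freudenthal's theorem determines only the \emph{identity component} of $Stab(\om_\g)$ -- his root-space computation is explicitly for maps sufficiently near the identity -- so it yields $A^{\top}A$ scalar only on $Stab^{0}(\om_\g)$. To globalize, the paper inserts a genuinely separate argument: since $Stab^{0}(\om_\g)=Ad(\g)$ is normal in $Stab(\om_\g)$, the full stabilizer lies in the normalizer $\Nn_{GL(\g)}(\g)$; the quotient by the centralizer embeds in $Aut(\g)$, and Schur identifies the centralizer with the center. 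Your ``main step'' is then a \emph{corollary} of that argument (for $A$ normalizing $\ad(\g)$, one checks directly that $A^{\top}A$ centralizes $\ad(\g)$), not an input available off the shelf. Kable's Theorem 7, by contrast, is already the global structure theorem $Stab(\om_\g)=Aut(\g)\ltimes M(\g)$, so quoting it makes your detour through $(\star)$ superfluous. As written, your proposal therefore either silently assumes away the connectedness issue or collapses to ``cite the full theorem,'' which is what the paper does -- the legitimate alternative, actually carrying out the root-space analysis showing that a Killing-symmetric invertible $N$ satisfying $(\star)$ is scalar for $\dim\g>3$, would be a self-contained proof handling all components at once, but it is precisely the part that is missing. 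Two minor points: irreducibility of the adjoint representation holds also in dimension $3$ (what fails there is ``$(\star)\Rightarrow N$ scalar'' itself, consistently with $Stab(\om_\g)=SL(\g)$); and in the real case you should note that the adjoint representation of a real form is \emph{absolutely} irreducible (its complexification is the adjoint representation of the complex simple algebra), so the commutant is $\R$ and Schur gives a real scalar without appeal to the symmetry of $T$.
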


\begin{proof}  Kable \cite[Theorem 7]{Kable2009}  showed that the stabilizer
group of the Cartan 3-form  $\om _\g$ on a semisimple Lie algebra  $\g$ is a  semi-direct product  of  $Aut(\g)$ with an abelian subgroup
$M(\g)$ consisting of elements $g \in  GL(\g)$ such that $g$ commutes with the adjoint action of $\g$, and $g^3 =1$.
Clearly Theorem \ref{dyn1} follows  from   Kable's theorem and  Schur's lemma.    

We  also  obtain Theorem \ref{dyn1} from  the  result by Freudenthal, which determined the identity component of the group  $Stab(\om_\g)$ \cite{Freudenthal1953}.   Denote by $\Nn _{GL(\g)} (\g)$ the normalizer of $\g$ in $GL (\g)$,
and by $\Zz_{GL(\g)} (\g)$  the centralizer of $\g$ in $GL (\g)$. We observe that  there is  a monomorphism
\begin{equation}
\Nn_{GL (\g)}  (\g)/\Zz _{GL (\g)}(\g)  \to Aut (\g) .
\label{aut}
\end{equation}
By  Freudenthal's  theorem  the group $Stab (\om_\g)$ is a subgroup of
$\Nn_{GL (\g)}(\g)$. Since the adjoint representation  of $\g$  is irreducible,     Schur's lemma  implies that
$\Zz_{GL (\g)}   (\g)$ is  the center  $\Zz ( GL(\g))$ of $GL (\g)$.
 Using (\ref{aut})  we get  the inclusion $Stab (\om_\g) \subset Aut (\g) \times \Z_3$,  where $\Z_3 = \Zz (GL (\g)) \cap Stab (\om _\g)$,  if $\g$ is a complex simple Lie algebra of dimension at least 8.
We  also get the inclusion $Stab (\om_\g) \subset Aut (\g)$, if $\g$ is a real form of a complex simple Lie algebra.
Now Theorem \ref{dyn1} follows immediately, observing that $Aut(\g) \subset Stab (\om _\g)$.
 \end{proof}

\begin{remark}\label{pan} In his  famous paper \cite{Dynkin1952}  Dynkin  explained  how one can apply  his result to     determine  the Lie algebra of the  stabilizer group of a $G$-invariant  differential form, where $G$ is a simple   Lie group.    Dynkin's  idea and  his classification result in \cite{Dynkin1952}  form   a main ingredient   of   Kable's proof (and our  unpublished  proof)  of Theorem \ref{dyn1}.
We thank   Dmitri Panyushev  for    informing us   of Kable's  paper and Freudenthal's  results  after a proof of Theorem \ref{dyn1} is obtained.
\end{remark}

\begin{remark}\label{hit}  1. Theorem \ref{dyn1}.2 is  a generalization of our theorem in \cite{LPV2008} for
case $\g = \ssl(3,\C)$  and its real forms, which has been proved by a different method.

2. Let $Ad(\g)$ denote the adjoint group of a Lie algebra $\g$. If $\g$ is a complex simple  Lie algebra or a compact form of a complex simple Lie algebra, then $Aut(\g)/Ad(\g)$ is  isomorphic to the  automorphism group  $Aut(D(\g))$ of  the Dynkin diagram  $D(\g)$ of $\g ^\C$, \cite[Theorem IX.5.4, Theorem IX.5.5, Theorem X.3.29]{Helgason1978}.  
It is well-known that  $Aut (\so (8)) = \Sigma_3$ -the permutation group on three letters, $Aut (D(\g)) =\Z_2$, if $\g = \su_n$ or $\g = \so (2n), \, n\not=4$,  or $\g = E_6$.  In other cases $Aut(D(\g)) = Id$.

3.  It  follows from  Dynkin results \cite[Theorem 2.1, Theorem 2.2]{Dynkin1952} that  if $\g$ is a  simple  complex Lie
algebra  over $\C$ or over $\R$ such that $\g \not = \ssp (n, \C)$  and $\g \not = \so (n, \C)$, then   the  Lie algebra of the stabilizer group of any $Ad(\g)$-invariant form  $\phi^l$ on $\g$ coincides with $\g$. We conjecture that this  assertion also holds for  $\g = \ssp (n)$ or $\g = \so (n)$.   If the Lie algebra of the stabilizer group  of $\phi^l$ is $\g$ we can use the same method in the proof  of Theorem \ref{dyn1}
to  find the stabilizer group of $\phi^l$.

4. Let $\g$ be a compact simple Lie group. The algebra of  $Ad(\g)$-invariant forms on $\g$ is  equal to  the algebra of de Rham cohomologies of any compact Lie group $G$ having Lie algebra $\g$.  
The algebra $H^*(G, \R)$ of a compact simple Lie group $G$ is generated by primitive elements $x_i, y_i$   of degree $i$ as follows \cite[App.A, Table 6.1.4, p.1742]{EDM1993}.\\
A.  $H^* (SU(n), \R) = \Lambda (x_3, x_5, \cdots, x_{2n-1})$.\\
B.  $H^* (SO(2n+1),\R) = \Lambda (x_3,x_7, \cdots, x_{4n-1})$.\\
C. $H^* (Sp(n), \R) = \Lambda (x_3,x_7, \cdots, x_{4n-1})$.\\
D. $H^* (SO(2n), \R) = \Lambda (x_3,x_7, \cdots, x_{4n-5}, y_{2n-1})$.\\
E6. $H^*(E_6, \R) = \Lambda (x_3, x_9, x_{11}, x_{15}, x_{17}, x_{23})$.\\
E7. $H^*(E_7, \R) = \Lambda (x_3, x_{11}, x_{15}, x_{19}, x_{23}, x_{27}, x_{35})$.\\
E8. $H^*(E_8, \R) = \Lambda (x_3, x_{15}, x_{23}, x_{27}, x_{35},  x_{39},  x_{47}, x_{59})$.\\
F. $H^*(F_4, \R) = \Lambda (x_3, x_{11}, x_{15}, x_{23})$.\\
G. $H^*(G_2, \R) = \Lambda (x_3, x_{11})$.

5.  If $\g$ is a complex simple Lie algebra,  and $\g_0$  is its  real form, then
any  $Ad(\g_0)$-invariant form $\phi^l$ on $\g_0$ extends  to an $Ad (\g)$-invariant  invariant form on $\g$.
\end{remark}

\section{Examples  of manifolds  provided with a  3-form  $\om ^3$ of  type $\om_\g$}

In this section we assume that $\g$ is  a  complex simple Lie algebra  of dimension $n\ge 8$  or a real form  of such a
Lie  algebra.  We introduce
the notion of a differential 3-form $\om ^3$ of type $\om_\g$, see Definition \ref{car1}. We   show some examples of  manifolds  provided
with a differential 3-form $\om ^3$ of type $\om_\g$, and  we discuss     some   possible  ways to construct  such manifolds  (Examples (\ref{ex1}) - (\ref{ex4})).

\begin{definition}\label{car1} A differential 3-form $\om^3$ on a manifold $M^n$  is  said  to be {\it  of  type   $\om_\g$}, if at every point $x\in M^n$ the 3-form $\om^3(x)$ is  equivalent to   
the Cartan form $\om_\g$ on $\g$, i.e. any linear isomorphism from $\g$ to $T_xM^n$  sends $\om^3(x)$ to a 3-form which belongs to 
the   $GL(\g)$-orbit  of $\om_\g$.  If $\g$ is a complex Lie algebra  we require that $M^n$ possesses  a volume form as well as an almost complex
structure $J$ and  the mentioned above linear  isomorphism    commutes with the (almost)  complex structures on $T_xM^n$ and on $\g$, moreover this
linear isomorphism preserves  a fixed volume form on $\g$.
\end{definition}

A differential 3-form $\om^3$ in Definition \ref{car1} is also called {\it a (simple, compact) Cartan 3-form}, if
no misunderstanding occurs.

 By Theorem \ref{dyn1}.2  the existence of a simple  Cartan 3-form on $M^n$ is equivalent to the existence
of an $Aut (\g)$-structure on $M^n$, if $\g$ is a real form of  a complex simple Lie algebra. Below we show   examples  and a possible construction 
of  manifolds $M^n$ admitting a  3-form of type  $\om_\g$, where  $n\ge 8$. (Note that any 3-manifold is parallelizable, so  it admits a 3-form  of type $\om_{\su(2)}$.)

\begin{example}\label{ex1} Any parallelizable  manifold  $M^n$   is provided  with a simple  Cartan 3-form, if
$n = \dim _\R \g$. For example  any (simple) Lie group $G$  is parallelizable,  the  manifold $S^{n_1} \times \cdots \times S^{n_r}$, $r\ge 2$, $\sum n _i = n$,  is parallelizable,
if  at least one  of the $n_i$ is odd \cite{Kervaire1956}.
\end{example}

\begin{example}\label{hom}   Assume that $\g$ is a real form
of a simple complex Lie algebra. A homogeneous   space $K/H$ admits   a  Cartan 3-form of type $\om_\g$  if and only if  the isotropy action  of $H$  is contained in the group  $Ad(\g) \subset \ssl (V) $, where $V = T_e K/H$ and $\dim V = \dim \g$.
Let $G$ be a Lie group with Lie algebra $\g$, and $\rho: H \to G$ - an embedding. We define $K:= H \times G$, and  let  $i: H \to K = H \times G$   have  the form $i = (Id \times \rho)$.   Note that  the isotropy action of $H$ on $V$ is contained in the group $Ad(\g)$, and this action  is reducible unless $\dim  H = \dim G$.  
For  given groups $H, G$ the homogeneous spaces $(H \times G )/H$ may have  infinitely  many distinct  homotopy types  depending on an embedding $\rho : H \to G$, see  \cite{KS1991} for the case $G = SU(3)$ and $H =  U(1)$.
\end{example}

\begin{example}\label{coh1}   Assume that a manifold $M^n$ is equipped with a differential 3-form $\om^3$ of type $\om_\g$ and a  Lie group $K$  acts on $M$  with cohomogeneity 1 preserving  the form $\om ^3$.
Assume that $\g$ is a real form of a simple complex Lie algebra. Then the principal $K$-orbit on $M^n$ has the form
$K/H$, where  the Lie algebra $\h$ of $H$  is also a sub-algebra of $\g$ such that the  induced adjoint action of $\h$ on $\g$  has a trivial   component of  dimension 1. Conversely, if $H$ is a subgroup of $K$ such that the sum of  the isotropy
action of $H$ on $T_e(K/H)$ with the trivial action of $H$ on $\R$ is  equivalent to the adjoint  action of $H$ on $\g$ via some embedding $\rho: \h \to \g$, then $(K/H)\times \R$ admits a differential 3-form
of type $\om_\g$.  In particular,   the  direct product $(G/S^1)\times  S^1$ admits
a 3-form of type  $\om_\g$, if $G$ is a Lie group with Lie algebra $\g$.  For a given $G$ these spaces may have infinitely  many distinct homotopy types, see \cite{KS1991}. Another example of a  compact cohomogeneity 1 space is manifold $(SO(5)/SU(2)) \times S^1$ admitting  a differential 3-form of type $\om_{\su(3)}$  as well as a differential 3-form
of type $\om_{\su(1,2)}$, since the sum of the isotropy representation of $SU(2)$ on $T_e (SO(5)/SU(2))$ with the trivial representation of $SU(2)$ on $\R$ is equivalent to
the adjoint action of $SU(2)$  on $\su(3)$ as well as to the one  on $\su(1,2)$.
\end{example}




\begin{example}\label{ex4}  Let $\g$ be a compact  simple Lie subalgebra of a compact Lie algebra $\bar \g$, and $\bar G$ -  a compact Lie group
with Lie  algebra  $\bar \g$. Let $\dim \g = k\ge 8$. Denote by $\Dd_k(\g\subset \bar\g)$ the collection  of  oriented $k$-planes $V^k$   on $\bar \g$ such that
$(\om_{\bar \g})_{| V^k}$ is equivalent to $\om_\g$. Let $\hat \Dd_k(\g\subset \bar\g)$ be the subset of the Grasmannian bundle $Gr_k(\bar G)$ of oriented $k$-planes tangent to $\bar G$ such that elements  of $\hat \Dd_k(\g\subset \bar\g)$ are obtained from $\Dd_k(\g\subset \bar\g)$ by translations composed from  left and right multiplications on $\bar G$. Let $M^k$ be a   submanifold of
$\bar G$  such that $TM^k \subset \hat\Dd_k(\g\subset\bar \g)$; we call such  $M^k$ {\it an  integral  submanifold of  $\hat\Dd_k(\g\subset\bar \g)$}. Then $M^k$
admits a closed 3-form of type $\om_\g$, which is the restriction of the Cartan 3-form on $\bar G$.  It is an interesting question,  if $\Dd_k(\g \subset \bar \g)$  has an integral submanifold   not locally isomorphic to the connected Lie subgroup $G$ having Lie algebra $\g$ in $\bar G$. Note that $\Dd_k(\g\subset\bar\g)$  contains  the subset $\hat \Dd(\g)$
which is obtained from subspace $\g\subset \bar \g$ by translations composed from  left and right multiplications on $\bar G$.
Liu proved that  any integral submanifold of $\hat\Dd(\g)$ is a totally geodesic submanifold in $\bar G$ \cite{Liu1995}. His result
generalized  a previous  result by Ohnita and  Tasaki \cite{OT1986}.  
\end{example}

\section{Algebraic types of manifolds provided with a 3-form $\om^3$ of type $\om_\g$}

In this section   we assume that  $G$ is a  compact simple Lie  group with Lie algebra $\g$ of dimension at least 8. We  recall the notion of  the intrinsic  torsion of a $G$-structure on a manifold $M^n$, and the notion of the  algebraic type of a $G$-structure, specializing  to the case
$G = Aut(\g) \subset SO(\g)$  (Definitions \ref{int},  \ref{aty}). 
We   prove some structure theorems on  $\g$-submodules of the $\g$-module  $\g \otimes \g^\perp$ with focus on those
intrinsic torsions whose affine  torsion is skew-symmetric  (Remark \ref{ten}, Remark \ref{skew}, Proposition \ref{main1}, Theorem \ref{main2}  and Corollary \ref{skew2}).
We  prove that any complete torsion-free $Aut(\g)$-manifold $M^n$ is either  a quotient of $\R^n$
or irreducible and locally symmetric of type I or IV  (Theorem \ref{tor}).
\medskip

Suppose that $M^n$ is a manifold  equipped with a  differential 3-form $\om^3$ of type $\om_\g$. By Theorem \ref{dyn1}  $M^n$ is equipped with an  $Aut(\g)$-structure, and
hence with  a Riemannian metric $K_\g$ which is associated with the negative of the Killing metric $K $ on $\g$, see  Remark \ref{fried}.1 below. 
Let $\hat\nabla$ be an $Aut(\g)$-connection of the $Aut(\g)$-structure on $M^n$, and $\nabla ^{LV}$ the Levi-Civita connection of the Riemannian metric
$K_\g$. 
Then $\eta: = \hat\nabla - \nabla ^{LC}$ is a tensor taking values in $T^*M^n \otimes \so(TM^n,K_\g)$, which  is
called {\it the  torsion tensor of $\hat \nabla$}. 
Using the isomorphism  $\so (\g) = \Lambda ^2 (\g)$
we get  the following $Ad(\g)$-equivariant decomposition
\begin{equation}
\so (\g)  = \g \oplus \g ^\perp, \label{decso}
\end{equation}
where 
\begin{equation}
 \g ^{\perp} : = \ker \delta_\g : \Lambda ^2 (\g) \to \g , \: v\wedge w \stackrel{\delta_\g}{\mapsto} [v, w].
 \label{lbr}
\end{equation}

\begin{remark}\label{del1}  Let us define a linear operator
\begin{equation}
d_\g: \Lambda ^1 (\g) \to \Lambda ^{2} (\g), \, \la d_\g(v), x\wedge y\ra : =\la v, [x, y]\ra. 
\end{equation}
Clearly,  $\delta _\g$ is the  adjoint  of $ d_\g$ with respect to the Killing metric $\la, \ra$.  
\end{remark}

Denote  by $\eta ^\g$ the component of the torsion tensor $\eta$ in $\g \otimes \g \subset \g\otimes \so(\g)$.   

\begin{definition}\label{int}\cite[\S 1]{Salamon2001} {\it The intrinsic torsion}
of an $Aut(\g)$-structure on $M^n$ is  defined by 
\begin{equation}
\xi: = \eta -  \eta ^\g  \in \g \otimes  \g ^\perp.
\label{tor1}
\end{equation}
\end{definition}

Since any  $Aut(\g)$-connection  on $(M^n, \om^3)$ is obtained from $\hat \nabla $ by adding a tensor taking value in $\g \otimes \g$,
the  intrinsic torsion is defined uniquely on $(M^n, \om_\g)$.

\begin{definition}\label{aty}  Let $W$ be a $\g$-submodule  in $\g \otimes \g ^\perp$.
A manifold $M^n$ provided with a 3-form $\om ^3$ of type $\om _\g$ is called of {\it  algebraic type of $W$}, if its intrinsic torsion $\xi$ takes values
in $W$.
\end{definition}

\begin{remark}\label{fried}  1. Given  a differential  3-form  $\om^3$   of type $\om_\g$ on $M^n$, we  define  the associated metric $K_\g$  by specifying a linear
isomorphism   $I_x:T_xM^n \to \g$ sending $\om_\g$ to $\om^3_x$.  Then $I_x^* (-K)$  is the required  Riemannian metric $K_\g (x)$. By Theorem \ref{dyn1}   the obtained  metric does not  depend on the choice of $I_x$.

2. (cf.  \cite[Lemma 1.2]{Salamon2001}) The above definition of algebraic types of  $Aut(\g)$-structures   is a specialization 
of   a definition of  algebraic types of  $G$-structures on manifolds $M^n$, where $G \subset SO(n)$. 
This scheme has been suggested first by Gray and  Hervella  for  almost Hermitian manifolds \cite{GH1980}.
In fact  they considered the case of a group $G$ being the stabilizer of   a tensor $T\in V$, where $V$ is a tensor space over $\R^n$ with induced action of $G$, and they looked at  the $G$-type of the  tensor $\nabla^{LC} T$.  Since $\hat\nabla T = 0$, where as before $\hat\nabla$  is a $G$-connection, we get
\begin{equation}
 \nabla ^{LC}T = \eta (T) =  \xi (T) = \sum_i e_i \otimes( \rho_* (\xi( e_i))  T \in \g \otimes  V,
 \label{rel1}
 \end{equation}
where $\rho_*: \so (n) \to \so (V)$ is the differential of the induced embedding $\rho : SO(n) \to SO(V)$, and $(e_i)$
is  an orthonormal basis in $\g$, so $\xi( e_i)$ is  a contraction of $\xi $ with $e_i$, which  takes values in $\g^\perp \subset \so (\g)$.  It follows from (\ref{rel1}) that    if $G\subset SO(n)$ is the stabilizer
group of a tensor $T$  on $\R^n$, then 
the algebraic $G$-type of $\nabla^{LC} T$ defines  the algebraic  $G$-type of the intrinsic torsion $\xi$  of a $G$-structure and vice versa.\\
3. In the case $T$ is  a 3-form of type $\om_\g\in \Lambda ^3 (\g)$ the formula (\ref{rel1}) has the following simple expression (see  the proof of Lemma \ref{dgs}.2 below)
\begin{equation}
\rho_* (\tau)(\om_\g) = d_\g \tau, \label{dg2}
\end{equation}
for $\tau \in \so (\g) = \Lambda ^2 (\g)$.
\end{remark}

\begin{remark}\label{ten} 1. For  a compact simple Lie algebra $\g$  we have computed the  decomposition of $\g^\perp\otimes \C$ into irreducible components using table 1 and table 5 in \cite{OV1990}. We put the result in Table 1.
We keep  notations in \cite{OV1990} with $k\pi_i$   denoting the  irreducible representation of the  highest weight  $(0, \cdots, k_{(i)},\cdots,  0)$ with respect to a  basis of  simple roots  of $\g$, and $R(\Lambda)$ denotes the  irreducible  representation with  the highest weight  $\Lambda$.
\medskip

{\bf Table 1}:  {\it Decomposition  of $\g^\perp\otimes  \C$}.
We denote by $\delta(\g)$ the  highest weight of the adjoint representation of $\g\otimes \C$. We note that $\so(6) \cong \su(4)$.
\medskip

\begin{tabular}{l|l|l|l}
$\g$ & $\dim \g $ & $\delta(\g)$ &  $\g^\perp\otimes \C$\\  
$\su (n+1)$ & $n ^2 + 2n$ &   $\pi_1 + \pi_l$& $  R(2\pi_1 +\pi_{n-1}) + R(\pi_2 + 2\pi _n)$\\
$\so(2n+1)$ & $ n(2n+1)$ & $\pi_2$ &  $ R(\pi_3 + \pi _1)$\\
$\ssp(n)$ & $n(2n+1)$ & $2\pi_1$ & $R(2\pi_1 + \pi_2) $\\
$\so(2n), \, n\ge 4$  & $n(2n-1) $ & $ \pi_2$ & $R(\pi_3 + \pi _1)$\\
$E_6$  & $ 78$  &$\pi_6$ &  $ R(\pi_3)$ \\
$E_7 $  & $133 $ & $\pi_6$ & $ R (\pi_5) $\\
$E_8 $  & $ 248 $ & $\pi_1$ &  $ R(\pi_2)$ \\
$F_4$ & $52 $ &  $\pi_4$ & $ R (\pi_3) $\\
$G_2$ &  $ 14 $ & $\pi_2$ & $R (3\pi_1)$  
\end{tabular}

\medskip

Thus    except $\g = \su (n+1)$,   in all other cases $\g^\perp\otimes \C$ is irreducible. (I thank Dmitri Panyushev, who
informed me of this observation).
In case $\g = \su(n+1)$, since $R(2\pi_1 +\pi_{n-1})$ and $ R(\pi_2 + 2\pi _n)$  are complex conjugate, the  real  $\su(n+1)$-module $ \g^\perp$
is also irreducible. We refer the reader to \cite[ \S 8]{Onishik2004}  for a comprehensive  exposition of the theory of  real representations of real semisimple Lie algebras.

2. To find  a decomposition  of the tensor  product $(\g \otimes \g^\perp)\otimes \C$ into irreducible  components
for a given simple Lie algebra $\g$ we  could use  available     software program packages (GAP or LiE or something else).  Note that a general  formula of this decomposition  for a compact simple Lie algebra $\g$ in any  infinite series $A_n, B_n, C_n, D_n$ is not  known.  Below we will find some important  $\g$-submodules  in  the  $\g$-module $\g \otimes \g^\perp$.
First we note that 
the  irreducible component with the largest dimension  of the tensor product $R(\delta(g)) \otimes R(\Lambda)$   has the highest weight $\delta(\g) + \Lambda$  \cite[Theorem 3.1]{Dynkin1952}.  (In \cite[Theorem 3.1]{Dynkin1952}
Dynkin gave   a  simple method to  find  some less  obvious irreducible components of  the tensor product of two irreducible complex representations.)  Let $\Lambda ^\perp$ denote the highest weight of  the irreducible   representation $\g^\perp\otimes \C$ if $\g \not = \su (n+1)$, and   let $\Lambda ^\perp$  denote the weight $(2\pi_1  + \pi _{n-1})$, if
$\g = \su (n+1)$. It follows that $\g \otimes \g^\perp$ contains an irreducible component  $R_{\max}: =Re (R ( \delta   + \Lambda ^\perp))$ if  $\g \not = \su (n+1)$, and $R_{\max} : = R(\delta + \Lambda ^\perp)\otimes \R$ if $\g = \su (n+1)$.   Here we denote by $Re (V)$ a real form  of a complex vector space $V$, and by $V\otimes \R$ - its realification.
\end{remark}

\medskip

Let us  consider the following $Ad(\g)$-equivariant linear
maps
\begin{equation}
D_+ : \g \otimes \g^\perp \to \Lambda ^4 (\g): \, v\otimes \tau \mapsto \sum v \wedge \rho_*( \tau)( \om _\g),\label{do1} 
\end{equation}
\begin{equation}
D_- : \g \otimes \g^\perp \to \Lambda ^2 (\g): \: v\otimes \tau \mapsto \sum v \rfloor  \rho_*(\tau)( \om _\g),\label{do2} 
\end{equation}
where $\rho_*(\tau)$  acts on the space $\Lambda ^3 (\g)$ as we have explained in Remark  \ref{fried}.

\begin{proposition}\label{main1} 1. A 3-form $\om ^3$ of  type $\om_\g$ is closed, if and only if its intrinsic torsion $\xi$ satisfies $\xi(x) \in \ker D_+(x)$  for all $x \in M^n$.\\
2. A  3-form $\om ^3$ of type $\om_\g$ is co-closed, if and only if its intrinsic torsion $\xi$ satisfies  $\xi(x) \in \ker D_-(x)$  for all $x \in  M^n$.\\
3. The  irreducible component $R_{\max}$ belongs to $\ker( D_-)$.\\
4. The irreducible component $R_{\max}$ belongs to $\ker (D_+)$ if and only if $\g = \su (3)$. 
\end{proposition}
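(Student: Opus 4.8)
The plan is to exploit the $Ad(\g)$-equivariance of $D_+$ and $D_-$ together with Schur's lemma. First I would use the identity $\rho_*(\tau)(\om_\g)=d_\g\tau$ from (\ref{dg2}) to rewrite both maps on a decomposable tensor as
\[
D_+(v\otimes\tau)=v\wedge d_\g\tau,\qquad D_-(v\otimes\tau)=v\rfloor\, d_\g\tau,
\]
where $v\in\g\cong\Lambda^1(\g)$ and $\tau\in\g^\perp\subset\Lambda^2(\g)$. Both are maps of $\g$-modules. By Remark \ref{ten}.2 (Dynkin's theorem) the component $R_{\max}$ is the Cartan component of highest weight $\delta+\Lambda^\perp$ and occurs in $\g\otimes\g^\perp$ with multiplicity one; hence by Schur's lemma $D_\pm|_{R_{\max}}$ is either $0$ or injective, and it is injective exactly when the target module contains a copy of $R_{\max}$ lying in the image. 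Thus both assertions reduce to questions about the occurrence of $R(\delta+\Lambda^\perp)$ in $\Lambda^2(\g)$ and in $\Lambda^4(\g)$. For the weight bookkeeping I would use the height of a weight, i.e. the sum of its coordinates in the basis of simple roots; recall that $\delta$ and $\Lambda^\perp$ lie in the root lattice and that conjugate weights have equal height.

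For Part 3 the target of $D_-$ is $\Lambda^2(\g)=\g\oplus\g^\perp$ of (\ref{decso}). The highest weights of its irreducible constituents are $\delta$ (from $\g$) and $\Lambda^\perp$ (from $\g^\perp$); for $\g=\su(n+1)$ the two summands of $\g^\perp\otimes\C$ have highest weights $2\pi_1+\pi_{n-1}$ and $\pi_2+2\pi_n$, both of height equal to the height of $\Lambda^\perp$. Since $\delta$ and $\Lambda^\perp$ have positive height, the weight $\delta+\Lambda^\perp$ has strictly greater height than every constituent highest weight of $\Lambda^2(\g)$, so $R_{\max}$ cannot be isomorphic to any of them. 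By Schur's lemma $D_-(R_{\max})=0$, which proves Part 3 uniformly.

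For Part 4 everything is controlled by the single element $D_+(e_\theta\otimes w)=e_\theta\wedge d_\g w$, the image of the highest weight vector of $R_{\max}$; here $e_\theta$ is a root vector for the highest root $\theta$ (the highest weight vector of the adjoint module) and $w$ is the highest weight vector of $\g^\perp$ (for $\su(n+1)$, of the summand of highest weight $\Lambda^\perp=2\pi_1+\pi_{n-1}$). By Schur's lemma $R_{\max}\subset\ker D_+$ if and only if this element vanishes. In the case $\g=\su(3)$ I would show it must vanish by a height bound: the nonzero weights of $\g^\C=\ssl(3,\C)$ are the roots $\theta=\pi_1+\pi_2$, $\alpha_1$, $\alpha_2$ and their negatives, with $0$ of multiplicity two, so the maximal height of a weight occurring in $\Lambda^4(\g^\C)$ is $\mathrm{ht}(\theta)+\mathrm{ht}(\alpha_1)+\mathrm{ht}(\alpha_2)=2+1+1=4$; on the other hand $\delta+\Lambda^\perp=4\pi_1+\pi_2=3\alpha_1+2\alpha_2$ has height $5$. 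Hence $R(\delta+\Lambda^\perp)$ does not occur in $\Lambda^4(\g)$, so the would-be highest weight vector $e_\theta\wedge d_\g w$ of such a copy must be $0$, giving $D_+(R_{\max})=0$.

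For the converse, $\g\neq\su(3)$, I would prove $e_\theta\wedge d_\g w\neq 0$. First, $d_\g w\neq 0$: if $d_\g v=0$ then $\la v,[x,y]\ra=0$ for all $x,y$, so $v\perp[\g,\g]=\g$ and $v=0$, whence $d_\g$ is injective on $\Lambda^1(\g)$ with image the summand $\g$ of (\ref{decso}); therefore $d_\g$ is injective on the complementary summand $\g^\perp$. The vanishing question then becomes whether $d_\g w\in e_\theta\wedge\Lambda^2(\g)$, i.e. whether every monomial of the weight-$\Lambda^\perp$ cocycle $d_\g w$ carries the factor $e_\theta$. The hard part will be showing this fails for every compact simple $\g\neq\su(3)$. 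Structurally $\su(3)$ is singled out as the only compact simple algebra of dimension at least $8$ with just three positive roots, which is precisely why $\delta+\Lambda^\perp$ overshoots the top height of $\Lambda^4(\g)$ there; for all other types there are at least four positive roots and hence ample room, and I would verify $e_\theta\wedge d_\g w\neq 0$ by writing $w$ and $d_\g w$ explicitly in a Chevalley basis and exhibiting one monomial of $d_\g w$ free of $e_\theta$. I expect this last nonvanishing to require a short case analysis along the classification of simple types (or a computer-algebra check with LiE or GAP, as in Remark \ref{ten}.2), since, as noted there, no uniform decomposition formula for $\g\otimes\g^\perp$ or $\Lambda^4(\g)$ across the infinite series is available.
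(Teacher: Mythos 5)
The most serious problem is that you never prove Parts 1 and 2, and nothing in your plan could prove them: the proposal contains no step relating $d\om^3$ or $d^*\om^3$ to the intrinsic torsion. The paper's argument for these parts is short but indispensable: since $\hat\nabla \om^3=0$ and $\rho_*$ of the summand $\g\subset\so(\g)$ annihilates the $Ad(\g)$-invariant form $\om_\g$, one gets $\nabla^{LC}\om^3=\sum_i e_i\otimes \rho_*(\xi(e_i))\om^3$ (formula (\ref{rel1})), and then the pointwise identities (\ref{kn1}) and (\ref{kn2}) give $d\om^3(x)=D_+(\xi(x))$ and $d^*\om^3(x)=D_-(\xi(x))$, from which both equivalences are immediate. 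Without this bridge, Parts 3 and 4 are statements about abstract $\g$-modules with no consequence for closedness or coclosedness of $\om^3$; you must supply it.

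For Parts 3 and 4 your route is essentially the paper's: rewrite $D_+(v\otimes\tau)=v\wedge d_\g\tau$ and $D_-(v\otimes\tau)=v\rfloor d_\g\tau$ (the paper's Lemma \ref{dgs}), use equivariance together with irreducibility of $R_{\max}$, and evaluate on the highest weight vector $v_\delta\otimes(v_\delta\wedge v_{\delta-\alpha_1})$. Part 3 is correct and matches the paper (the target $\Lambda^2(\g)=\g\oplus\g^\perp$ contains no constituent isomorphic to $R_{\max}$; your height count is a valid, if stronger than needed, justification, since $\delta+\Lambda^\perp\neq\delta,\Lambda^\perp$ already suffices). Your height bound for the $\su(3)$ vanishing in Part 4 --- the weight $\delta+\Lambda^\perp=4\pi_1+\pi_2=3\alpha_1+2\alpha_2$ has height $5$, while no weight of $\Lambda^4(\ssl(3,\C))$ exceeds height $4$, so the relevant weight space is zero --- is cleaner and more checkable than the paper's unexpanded ``direct computation,'' and is a genuine local improvement. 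But your converse direction (that $e_\delta\wedge d_\g w\neq 0$ for every compact simple $\g\neq\su(3)$) is explicitly deferred to a future case analysis or computer check, so the biconditional is not actually established; the paper is admittedly equally laconic at this point (``using the table of simple roots \dots we conclude''), but a self-contained proof cannot lean on that. One smaller slip: injectivity of $d_\g$ on $\Lambda^1(\g)$ does not by itself yield injectivity of $d_\g$ on $\g^\perp$, as your ``therefore'' suggests; you need Whitehead's lemma $H^2(\g)=0$ (equivalently $b_2(\g)=0$), which forces $\ker d_\g\cap\Lambda^2(\g)=d_\g\Lambda^1(\g)$, the summand $\g$ of (\ref{decso}), and hence $\ker d_\g\cap\g^\perp=0$.
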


\begin{proof}   Let us explain the meaning of the operators  $D_\pm$. We denote by $(e_i)$  an orthonormal frame  in $\g$ and $(e^i)$ its dual frame. 
Using the  following well-known identities  for a  differential form $\rho$ on  a Riemannian manifold $M^n$
\begin{eqnarray}
d\rho(x) = \sum_i e^i\wedge \nabla ^{LV}_{e_i} \rho (x) ,\label{kn1}\\
 d^* \rho (x) =   \sum _ie_i \rfloor \nabla ^{LC} _{e_i} \rho (x) ,\label{kn2}
 \end{eqnarray}
taking into account (\ref{rel1}), we get the first and second assertions of Proposition \ref{main1}.

 Since $\g$ and $\g^\perp$ are the only irreducible components of $\Lambda ^2 (\g)$, the image  $Im \, (D_-) \subset \Lambda ^2 (\g)$ does not contain  any component equivalent to $R_{\max}$.  It follows  that   $R _{\max}\subset \ker (D_-)$.  This proves  the  third assertion of Proposition \ref{main1}. 

4. We will show that $R_{\max} \subset\ker (D_+)$ if and only if $\g = \su(3)$.  First we recall that the   $Ad (\g)$-equivariant  differential  $d_\g:  \Lambda ^k (\g) \to \Lambda ^{k+1} (\g)$ is  defined as follows. For $k = 1$
we define $d_\g$ as in Remark \ref{del1}.  For $k \ge 2$ we define $d_\g$ inductively by
\begin{equation}
d_\g (\alpha \wedge \phi) : =  d_\g (\alpha) \wedge \phi + (-1) ^{deg\, \alpha}\alpha \wedge d_\g (\phi).\label{dg1}
\end{equation}
 Next  we define  linear operators $ \Pi_{1,3}^+: \g \otimes \Lambda ^3 (\g)  \to  \Lambda ^4 (\g)$ and
 $\Pi^{-}_{1,3}: \g \otimes \Lambda ^3 (\g)  \to  \Lambda ^2(\g)$  by
$$ \Pi_{1,3}^+( v\otimes \theta ):=  v\wedge \theta, \: \Pi_{1,3}^-( v\otimes \theta ):=  v\rfloor \theta.$$

\begin{lemma} \label{dgs} 1. The operator $d_\g$ can be  expressed by
\begin{equation}
d_\g ( \phi) = \sum _i(e_i \rfloor \om_\g) \wedge (e_i \rfloor \phi), \label{dg2b}
\end{equation}
where $(e_i)$ is an  orthonormal basis in $\g$.

2. Let $\tau \in \g^\perp$. Then 
\begin{equation}
D_+ ( v \otimes \tau) = \Pi_{1,3}^+ ( v \otimes  d_\g ( \tau)), \: D_- ( v \otimes \tau) = \Pi_{1,3}^- ( v \otimes  d_\g ( \tau)).\label{dpl}
\end{equation}
\end{lemma}

\begin{proof} 1. We  check easily  the validity  of (\ref{dg2}) for $\phi\in \Lambda ^1 (\g)$.  Denote the RHS of (\ref{dg2}) by
$\sigma _+ (\phi)$.
We   observe that $\sigma _+$ is a differential, i.e. $ \sigma _+ (\alpha \wedge \beta ) = \sigma _+ (\alpha) \wedge
\beta +(-1)^{deg\, \alpha} \alpha \wedge \sigma_+(\beta)$. Hence $d_\g = \sigma _+$. This proves the first assertion of Lemma \ref{dgs}.

2. Note that $D_+$ is the restriction of  the linear operator also denoted by $D_+ : \g \otimes \Lambda ^2 (\g)$ defined by the same formula
in (\ref{do1}). Thus it suffices to check the validity of (\ref{dpl})  for basis elements $e_i \otimes (e_j \wedge e_k) \in \g \otimes \Lambda ^2 (\g)$.
Using (\ref{do1}) we get
$$ D_+ (e_i \otimes (e_j \wedge e_k)) = e_i \wedge [e_j \wedge (e_k \rfloor \om_\g) - e_k \wedge (e_j \rfloor \om_\g)].$$
Using (\ref{dg2}) we get
$$\Pi_{1,3}^+ (e_i \otimes d_\g ( e_j \wedge e_k)) =  e_i  \wedge [(e_k \rfloor \om_\g) \wedge e_j  - e_k \wedge (e_j \rfloor \om_\g)].$$
Comparing the above formulas we get 
$$D_+ (v\otimes \tau) = \Pi^+_{1,3} (v \otimes \tau).$$
In the same way we get $D_-(v\otimes \tau) = \Pi^{-}_{1,3} (v\otimes \tau)$. This completes  the proof of the second assertion of Lemma \ref{dgs}.
\end{proof}

Let us continue the proof of Proposition \ref{main1}, keeping the notations in Table 1. Let $\alpha_1$ be  the  simple root of $\g \otimes \C$
such that $(\delta, \alpha_1) \not = 0$ and $v_{\delta} \wedge v_{\delta -\alpha_1}$ is  (one of) the highest  vector of $\g^\perp \otimes \C$, where $v_\delta\in \g\otimes \C$ (resp. to $v_{\delta -\alpha_1}\in \g\otimes \C$) is the  root vector  corresponding to $\delta$ (resp.
$\delta -\alpha_1$),   see e.g. \cite[Theorem 3.1]{Dynkin1952}. Note that the component $R_{\max}\otimes \C$ has the highest vector
$v_\delta \otimes (v_{\delta} \wedge v_{\delta -\alpha_1})$.  To prove the last assertion of  Proposition \ref{main1}
it suffices to show that  $D_+ (e_\delta \otimes (e_{\delta} \wedge e_{\delta -\alpha_1}))=0$
if and only if $\g = \su(3)$, where $D_+$, $\Pi_{1,3}^+$ and $d_\g$ extend $\C$-linearly to
$(\g\otimes \g^\perp)\otimes \C$ and $\Lambda ^k(\g \otimes \C)$.
A direct computation using (\ref{dpl}) shows that 
$$D_+ (e_\delta \otimes (e_{\delta} \wedge e_{\delta -\alpha_1}))= e_\sigma \wedge \sum_{\alpha +\beta = \sigma} c^{\sigma} _{\alpha,\beta} e_\alpha \wedge e_\beta \wedge e_{\sigma -\alpha_1},$$
where $d_{\g}e_\sigma = \sum_{\alpha +\beta = \sigma} c^{\sigma} _{\alpha,\beta} e_\alpha \wedge e_\beta$. 
Using the table of simple roots  of simple Lie algebras \cite[Table 1]{OV1990} and the above formula, we conclude  that $D_+ (e_\delta \otimes (e_{\delta} \wedge e_{\delta -\alpha_1}))= 0$, if and only if $\g = \su(3)$.  This completes the proof
of Proposition \ref{main1}.
\end{proof}
\medskip

We introduce new notations by looking at the following orthogonal decompositions
$$W_{har} : = \ker (D_+) \cap \ker (D_-)\subset \g \otimes \g^\perp.$$
$$\ker (D_+) := W_{har} \oplus W_{har}  ^\perp(D_+).$$
$$\ker (D_-) := W_{har} \oplus W_{har} ^\perp ( D_-).$$
Denote by $\delta _\g$ the adjoint of $d_\g$ with respect to the minus Killing metric on $\g$.  Note that  we have the following  orthogonal decomposition
\begin{equation}
\Lambda ^3 (\g) = \la \om_\g \ra _\R \oplus  \Lambda ^3_d(\g) \oplus \Lambda ^3 _\delta (\g), \label{dec3}
\end{equation}
where $\Lambda ^3_\delta (\g) := \delta_\g (\Lambda ^ 4(\g))$  and $\Lambda ^3_d(\g) = d_\g ( \g ^\perp)$. 

\begin{theorem}\label{main2} 1. We have the following decomposition  
$$ \g \otimes \g^\perp = W_{har} \oplus_i  V_i, $$
where $V_i$  is one of  irreducible modules contained in $\Lambda ^2 (\g) \oplus \Lambda ^4 (\g)$.\\
2.  $W_{har}^\perp(D_+)$ contains  a $\g$-module which is
isomorphic to $\g^\perp \subset \Lambda ^2 (\g)$.\\
3.  $W_{har}^\perp (D_-)$ contains  a $\g$-module which is isomorphic to $\Lambda ^3_\delta (\g)$.\\ 
4.  The image of $D_+ $ contains  the module $\om _\g \wedge \Lambda ^ 1(\g)$.\\
5.  $D_-$ is surjective.
\end{theorem}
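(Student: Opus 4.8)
The plan is to reduce all five assertions to a small number of universal identities for the Chevalley--Eilenberg operators $d_\g$ and $\delta_\g$ on $\Lambda^\bullet(\g)$, combined with Hodge theory for the compact simple algebra $\g$. Fix an orthonormal basis $(e_i)$ of $\g$ for the metric $-K$, identify $\g$ with $\Lambda^1(\g)$ via $-K$, and write $L_{e_i}:=e_i\rfloor d_\g+d_\g(e_i\rfloor\,\cdot\,)$ for the algebraic Lie derivative; each $L_{e_i}$ is the derivation of $\Lambda^\bullet(\g)$ induced by $\ad(e_i)\in\so(\g)$, hence is skew-adjoint, commutes with $d_\g$ and $\delta_\g$, and preserves the submodule $\g^\perp\subset\Lambda^2(\g)$. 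By Lemma \ref{dgs}.2 the operators factor as $D_\pm(v\otimes\tau)=\Pi^\pm_{1,3}(v\otimes d_\g\tau)$, so everything is governed by $d_\g$ on the second factor. From Hodge theory I record that $H^2(\g)=0$ forces $d_\g\colon\g^\perp\to\Lambda^3_d(\g)$ to be a $\g$-module isomorphism (it is injective because a $d_\g$-closed element of $\g^\perp=\ker\delta_\g$ would be harmonic), that $d_\g$ annihilates the $\g$-summand $d_\g(\Lambda^1(\g))$ of $\Lambda^2(\g)$, and that (\ref{dec3}) holds with $H^3(\g)=\langle\om_\g\rangle$.

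Assertion 1 is then formal: $D_+\oplus D_-\colon\g\otimes\g^\perp\to\Lambda^4(\g)\oplus\Lambda^2(\g)$ is $Ad(\g)$-equivariant with kernel $W_{har}$, so its restriction to $W_{har}^\perp$ is an isomorphism onto a submodule of $\Lambda^2(\g)\oplus\Lambda^4(\g)$; decomposing $W_{har}^\perp=\bigoplus_i V_i$ into irreducibles by complete reducibility, each $V_i$ is isomorphic to an irreducible constituent of $\Lambda^2(\g)\oplus\Lambda^4(\g)$. For the two surjectivity-type assertions I argue directly. For assertion 4, I start from $\om_\g=\tfrac13\sum_i e_i\wedge(e_i\rfloor\om_\g)$ and set $a_i:=e_i\rfloor\om_\g=d_\g e_i$; expanding $a_i\wedge w=d_\g(e_i\wedge w)+e_i\wedge a_w$ and wedging with $e_i$, the terms $e_i\wedge e_i\wedge a_w$ vanish, so that $\om_\g\wedge w=\tfrac13\sum_i e_i\wedge d_\g\big(P_{\g^\perp}(e_i\wedge w)\big)=D_+\big(\tfrac13\sum_i e_i\otimes P_{\g^\perp}(e_i\wedge w)\big)$, using that $d_\g$ kills the $\g$-summand. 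Hence $\om_\g\wedge\Lambda^1(\g)\subset\operatorname{im}(D_+)$. For assertion 5, Cartan's formula $v\rfloor d_\g\tau=L_v\tau-d_\g(v\rfloor\tau)$ shows $P_{\g^\perp}D_-(v\otimes\tau)=L_v\tau$ and $P_\g D_-(v\otimes\tau)=-d_\g(v\rfloor\tau)$; the first is nonzero because $\g^\perp$ is a nontrivial irreducible $\g$-module, the second because $v\mapsto d_\g v=\ad(v)$ is injective, and since $\dim\g^\perp>\dim\g$ the only submodule of $\Lambda^2(\g)=\g\oplus\g^\perp$ surjecting onto both summands is all of $\Lambda^2(\g)$.

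Assertions 2 and 3 I prove by exhibiting explicit equivariant copies, using the two master identities $\sum_i e_i\wedge L_{e_i}=2d_\g$ and $\sum_i L_{e_i}(e_i\rfloor\,\cdot\,)=-2\delta_\g$ on $\Lambda^\bullet(\g)$. For assertion 2, put $\Phi(\sigma):=\sum_i e_i\otimes L_{e_i}\sigma$ for $\sigma\in\g^\perp$ (which lies in $\g\otimes\g^\perp$ since $L_{e_i}$ preserves $\g^\perp$). Because $L_{e_i}$ commutes with $d_\g$, the first identity gives $D_+\Phi(\sigma)=\sum_i e_i\wedge L_{e_i}(d_\g\sigma)=2d_\g^2\sigma=0$, so $\Phi(\g^\perp)\subset\ker D_+$; the second identity gives $D_-\Phi(\sigma)=\sum_i L_{e_i}(e_i\rfloor d_\g\sigma)=-2\delta_\g d_\g\sigma=-2\Delta\sigma$, a nonzero multiple of $\sigma$ since the Hodge Laplacian $\Delta=d_\g\delta_\g+\delta_\g d_\g$ acts on the irreducible module $\g^\perp$ by a positive scalar (as $H^2(\g)=0$). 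Thus $\Phi(\g^\perp)\cong\g^\perp$ meets $W_{har}$ trivially and projects isomorphically into $W_{har}^\perp(D_+)$. Dually, for assertion 3 put $\Psi(\psi):=\sum_i e_i\otimes P_{\g^\perp}(e_i\rfloor\psi)$ and restrict to $\psi\in\Lambda^3_\delta(\g)$: since $d_\g$ kills the $\g$-summand, the second identity yields $D_-\Psi(\psi)=\sum_i L_{e_i}(e_i\rfloor\psi)=-2\delta_\g\psi=0$ (coexact forms are coclosed), while the first yields $D_+\Psi(\psi)=\sum_i e_i\wedge d_\g(e_i\rfloor\psi)=-2d_\g\psi$, which is injective on $\Lambda^3_\delta(\g)$. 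Hence $\Psi$ maps $\Lambda^3_\delta(\g)$ isomorphically onto a submodule of $\ker D_-$ on which $D_+$ is injective; this submodule meets $W_{har}$ trivially and projects isomorphically into $W_{har}^\perp(D_-)$.

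The technical heart---and the step I expect to be the main obstacle---is the proof of the two master identities. I plan to obtain $\sum_i e_i\wedge L_{e_i}=2d_\g$ from Lemma \ref{dgs}.1 (which gives $d_\g=\sum_i a_i\wedge(e_i\rfloor\,\cdot\,)$ with $a_i=e_i\rfloor\om_\g$), the elementary relation $\sum_i e_i\wedge(e_i\rfloor\phi)=(\deg\phi)\phi$, and the Cartan decomposition of $L_{e_i}$, and then to deduce $\sum_i L_{e_i}(e_i\rfloor\,\cdot\,)=-2\delta_\g$ by taking formal adjoints, using that each $L_{e_i}$ is skew-adjoint while $e_i\wedge$ and $e_i\rfloor$ are mutually adjoint. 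The remaining points are routine book-keeping: that $L_v$ preserves $\g^\perp$, that the Laplace eigenvalue on $\g^\perp$ is nonzero (equivalently $H^2(\g)=0$) so the constructed modules are not absorbed into $W_{har}$, and that $\g^\perp$ is irreducible as a real module (Remark \ref{ten}.1). Note that the $\su(3)$ exception of Proposition \ref{main1}.4 plays no role here, as assertions 2--5 hold uniformly.
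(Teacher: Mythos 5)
Your proposal is correct and follows essentially the paper's own route: your $\Psi$ is exactly the paper's map $\Theta$, your $\Phi$ coincides with $\Theta\circ d_\g|_{\g^\perp}$ (Cartan's formula together with $d_\g^2=0$ identifies $\Pi_{\g^\perp}(e_i\rfloor d_\g\sigma)$ with $L_{e_i}\sigma$), and your two ``master identities'' are the paper's identity $d_\g=\tilde d_\g=\sum_i e_i\wedge ad_{e_i}$ from the proof of Lemma \ref{dth} together with its formal adjoint, while your treatments of assertions 1, 4 and 5 reproduce the paper's arguments (the wedge identity $\sum_i e_i\wedge d_\g(e_i\wedge w)=3\,\om_\g\wedge w$ and the Schur-type analysis of the two summands of $\Lambda^2(\g)$) in lightly repackaged form. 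The only deviation is the convention-dependent constant in the first master identity ($2d_\g$ versus the paper's $d_\g$, which would give $-2d_\g$ in place of the paper's $-3d_\g$ in Lemma \ref{dth}.1), and this is immaterial since every application needs only that the constants be nonzero.
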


\begin{proof} 1. We have a decomposition $\g\otimes \g^\perp = \ker (D_+ ) \oplus \ker( D_+ )^\perp$.  Let us  consider the decomposition $\ker (D_+) = W_{har} \oplus  W_{har}^{\perp}(D_+)$.
Since the restriction of $D_-$ to $W_{har}^\perp (D_+)$ is injective, $W_{har}^\perp (D_+)$  is isomorphic to  a $\g$-module in $\Lambda ^2 (\g)$. 
Since $(\ker D_+) ^\perp$ is isomorphic to a $\g$-submodule in $\Lambda ^4(\g)$, we  get the first assertion of Theorem \ref{main2} immediately.

2. We define  an $Ad(\g)$-equivariant  linear map
\begin{equation}
\Theta : \Lambda ^3 (\g) \to \g \otimes \g^\perp , \:  T \mapsto \sum_i e_i \otimes \Pi_{\g^\perp} ( e_i \rfloor  T),
\end{equation}
where $\Pi _{\g ^{\perp}}$ is the orthogonal projection onto   the  subspace $\g^\perp \subset \Lambda ^2 (\g)$
and $(e_i)$ is an orthonormal basis in $\g$.

\begin{remark}\label{skew} A result by Friedrich  \cite[Theorem 3.1]{Friedrich2002} implies that   the image $\Theta(\Lambda^3(\g))$ consists of all  intrinsic torsions $\xi ^A$ of  $Aut(\g)$-connections $A$ whose affine torsion  $T^A: =  \nabla ^A _X Y - \nabla ^A _Y X - [X, Y ]$ is skew-symmetric, i.e. $\la T^A (X, Y), Z \ra = -
\la T^A(X, Z), Y\ra $.
\end{remark}



\begin{lemma}\label{dth}  For  any $\theta \in \Lambda ^3 (\g)$  we have \\
1. $(D_+ )\circ \Theta (\theta) = -3 d_\g \theta$.\\
2. $(D_-) \circ \Theta  (\theta) = -\delta _\g (\theta)$.
\end{lemma}

\begin{proof} 1. Let $\theta\in \Lambda ^ 3 (\g)$.  Using (\ref{dpl}) we  get
\begin{equation}
 D_+\circ  \Theta (\theta) =  \sum _i ( e_i \wedge d_\g ( e_i \rfloor \theta)),\label{dth1}
 \end{equation}
where $(e_i)$ is an orthonormal basis in $\g$. Applying the Cartan formula  $d_\g(e_i\rfloor \phi) = -e_i\rfloor d_\g\phi + ad_{e_i} \phi$  to the RHS of (\ref{dth1}) we get
\begin{equation}
D_+\circ \Theta (\theta)  =  \sum _i e_i \wedge[ ( - e_i  \rfloor  d_\g \theta)  + ad _{e_i}  (\theta)]  = - 4 d_\g \theta + \sum_i e_i \wedge ad_{e_i} ( \theta).\label{dth2}
\end{equation}
Set $\tilde d_\g: = \sum _i e_i \wedge ad_{e_i}$. Note that  the  operator $\tilde d_\g :\Lambda ^k(\g) \to \Lambda ^{k+1} (\g)$ is a differential,
i.e. 
$\tilde d_\g ( \alpha \wedge \beta) =  \tilde d_\g (\alpha) \wedge \beta +(-1) ^{deg\, \alpha} \alpha \wedge \tilde
d_\g \beta$.   Furthermore we check easily for $\alpha \in \Lambda ^1 (\g)$ that $\tilde  d_\g (\alpha ) = d_\g(\alpha)$.
It follows that $d_\g = \tilde d_\g$. Using this we get the first assertion of Lemma \ref{dth} immediately from (\ref{dth2}).

2. Let $\theta \in \Lambda ^ 3 (\g)$.  Using (\ref{dpl}) and the Cartan formula $ad_{e_i} \theta = d_\g (e_i \rfloor \theta) + e_i \rfloor d_\g \theta$, we  get
\begin{equation}
 D_- \circ\Theta (\theta) =  \sum _i ( e_i \rfloor  d_\g ( e_i \rfloor \theta)) =  \sum _i (e_i \rfloor ad_{e_i} \theta).\label{dth3}
 \end{equation}
 
On the other hand we have
\begin{equation}
\delta_\g (v_1 \wedge    \cdots  \wedge v_k ) = \sum _{i<j} (-1) ^{i +j +1} [ v_i, v_j] \wedge v_1 \wedge \cdots _{\hat i} \cdots _{\hat j}\cdots \wedge v_k. \label{delg}
\end{equation}

Now compare (\ref{dth3}) with (\ref{delg}) we get  the second assertion  of Lemma \ref{dth} immediately.
\end{proof}

\begin{lemma}\label{kth}  $\ker \Theta = \la \om _\g \ra _\R.$
\end{lemma}

\begin{proof}  Clearly $\phi \in \ker \Theta$ if and only
if $v_i \rfloor  \phi = d_\g w_i$ for all $v_i$ and  some $w_i  \in \g$ depending on $v_i$.
In particular $\om _\g \in \ker \Theta$. 

Now let $\phi \in \ker \Theta$. We write
$$ \phi = \phi_{harm} + \phi_d  + \phi _\delta$$
corresponding to the decomposition (\ref{dec3}). To complete the proof of Lemma \ref{kth} it suffices to show that
$\phi_d = 0 = \phi_\delta$.  Using Lemma \ref{dth} we get
$$ (D_+)\circ \Theta (\phi) = -3d_\g \phi_\delta, $$
$$(D_-)\circ \Theta (\phi)= -\delta_\g \phi _d.$$
Hence we get
$$d_\g \phi_\delta = 0 = \delta_\g \phi_d.$$
Since $(\ker d_\g) _{| \Lambda _d^3 (\g)} = 0$ and
$(\ker \delta _\g) _{|\Lambda ^3 _{\delta}(\g)} = 0$, we conclude that
$\phi_d = 0 = \phi_\delta$. This completes the proof of Lemma \ref{kth}.
\end{proof}

 Let us continue the proof of  Theorem \ref{main2}.2. It follows from Lemma \ref{dth}
\begin{equation}
 D_+( \Theta (d_\g (\g^\perp)) )= 0,  \label{d+1}
 \end{equation}

 \begin{equation}
  D_- (\Theta (d_\g (\tau))  =-\delta_\g d_\g (\tau) \text{ for } \tau \in \g^{\perp}.\label{mid}
 \end{equation}
It follows that $\Theta (d_\g (\g^\perp)) \subset W_{har} ^\perp (D_+)$. Taking into account Lemma \ref{kth} this proves the second assertion of  Theorem \ref{main2} immediately.

3. By Lemma \ref{kth} $\ker \Theta _{|\Lambda ^3 _\delta (\g)}=0$.  Lemma \ref{dth} implies that  $\ker (D_-)$ contains  a $\g$-submodule $\Theta (\Lambda ^3 _\delta (\g))$,  and moreover  the kernel of the restriction of $D_+$ to $\Theta
(\Lambda ^3 _\delta (\g))$ is zero. 
This proves the third assertion of Theorem \ref{main2}.

4. The fourth assertion of Theorem \ref{main2} follows by comparing  (\ref{do1}) with the following formula 
$$ \sum _i  e_i \wedge  \rho_*(e_i \wedge X)(\theta) = 3X\wedge \theta,$$
where $(e_i)$ is an orthonormal basis in $\g$, $X \in \g$  and $\theta \in \Lambda ^3(\g)$, in particular this formula holds for $\theta = \om_\g$.  

5. Using  (\ref{mid}) we conclude that the image of $D_-$ contains   $\g ^\perp$. A direct computation yields the following identity  for any $X \in \g$
$$\sum _i e_i \rfloor  \rho_*(e_i \wedge X)(\om _\g) = -2 X\rfloor \om _\g = -2 d _\g X,$$
where $(e_i)$ is an orthonormal basis in $\g$. It follows that  the image of $D_-$ contains the  irreducible component $d_\g \g \subset \Lambda ^2(\g)$.
Since $\Lambda ^2 (\g) = d_\g (\g) \oplus \g^\perp$, this completes the proof of Theorem \ref{main2}.
\end{proof}

From Remark \ref{skew}, Lemma \ref{kth} and Lemma \ref{dth} we get immediately

\begin{corollary}\label{skew2} The space of $Aut(\g)$-connections with skew-symmetric affine torsion on a manifold $M^n$   provided with a 3-form $\om^3$ of type $\om_\g$ is a direct sum  of two $\g$-modules, one of them consists of  those
connections for which $d\om^3 = 0$ and the other one consists of those  connections for which $d^* \om^3 = 0$.
\end{corollary}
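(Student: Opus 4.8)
The plan is to assemble Corollary \ref{skew2} directly from the three ingredients cited in its preamble, treating the map $\Theta$ as the organizing device. First I would recall, via Remark \ref{skew}, that the space of $Aut(\g)$-connections with skew-symmetric affine torsion is exactly the image $\Theta(\Lambda^3(\g)) \subset \g \otimes \g^\perp$. By Lemma \ref{kth} the kernel of $\Theta$ is precisely $\la \om_\g \ra_\R$, so $\Theta$ descends to an $Ad(\g)$-equivariant isomorphism from $\Lambda^3(\g)/\la\om_\g\ra_\R$ onto this space of intrinsic torsions. Invoking the orthogonal decomposition (\ref{dec3}), namely $\Lambda^3(\g) = \la\om_\g\ra_\R \oplus \Lambda^3_d(\g) \oplus \Lambda^3_\delta(\g)$, I would conclude that the relevant space is the direct sum of two $\g$-modules $\Theta(\Lambda^3_d(\g))$ and $\Theta(\Lambda^3_\delta(\g))$.

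Next I would identify these two summands with the closedness and co-closedness conditions using Proposition \ref{main1} together with Lemma \ref{dth}. Part 1 of Proposition \ref{main1} says $d\om^3 = 0$ is equivalent to $\xi \in \ker D_+$, and part 2 says $d^*\om^3 = 0$ is equivalent to $\xi \in \ker D_-$. For a torsion of the form $\xi = \Theta(\theta)$, Lemma \ref{dth} gives $(D_+)\circ\Theta(\theta) = -3 d_\g\theta$ and $(D_-)\circ\Theta(\theta) = -\delta_\g\theta$. Hence on the summand $\theta \in \Lambda^3_d(\g) = d_\g(\g^\perp)$ we have $d_\g\theta = 0$ automatically (since $d_\g^2 = 0$), so $D_+\circ\Theta$ vanishes there and these connections satisfy $d\om^3 = 0$; dually, on $\theta \in \Lambda^3_\delta(\g) = \delta_\g(\Lambda^4(\g))$ we have $\delta_\g\theta = 0$, so $D_-\circ\Theta$ vanishes and these connections satisfy $d^*\om^3 = 0$.

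To make this airtight I would also check the two summands are genuinely characterized by (not merely contained in) the vanishing conditions, i.e. that on $\Theta(\Lambda^3_\delta(\g))$ the operator $D_+\circ\Theta = -3d_\g$ is injective and on $\Theta(\Lambda^3_d(\g))$ the operator $D_-\circ\Theta = -\delta_\g$ is injective. This follows from the nondegeneracy facts already used in the proof of Lemma \ref{kth}: the restriction $(\ker d_\g)_{|\Lambda^3_d(\g)} = 0$ and $(\ker\delta_\g)_{|\Lambda^3_\delta(\g)} = 0$, combined with $\ker\Theta = \la\om_\g\ra_\R$ being disjoint from both summands. Thus a skew-torsion connection satisfies $d\om^3 = 0$ exactly when its $\Theta$-preimage lies in $\Lambda^3_d(\g)$, and satisfies $d^*\om^3 = 0$ exactly when its preimage lies in $\Lambda^3_\delta(\g)$, giving the claimed splitting into two $\g$-modules with the stated interpretations.

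The main obstacle I anticipate is not any single computation but the bookkeeping needed to confirm that the $\g$-module direct sum is orthogonal and exhausts the whole space of skew-symmetric-torsion intrinsic torsions, with no overlap between the $d$-summand and the $\delta$-summand. This reduces entirely to the orthogonality built into (\ref{dec3}) and the equivariance of $\Theta$, so once the Hodge-type decomposition of $\Lambda^3(\g)$ is in hand the result is essentially a translation of Lemma \ref{dth} through the isomorphism established in Lemma \ref{kth}. I would therefore present the argument compactly, emphasizing that everything has already been proved and only needs to be combined.
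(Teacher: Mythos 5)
Your proposal is correct and follows exactly the paper's route: the paper obtains the corollary ``immediately'' from Remark \ref{skew}, Lemma \ref{kth} and Lemma \ref{dth}, and your write-up is precisely that assembly — identifying the space of skew-torsion intrinsic torsions with $\Theta(\Lambda^3_d(\g)) \oplus \Theta(\Lambda^3_\delta(\g))$ via the decomposition (\ref{dec3}) and translating the two summands through Proposition \ref{main1} — with the injectivity checks made explicit. (One cosmetic point: the nondegeneracy facts you need should read $(\ker d_\g)_{|\Lambda^3_\delta(\g)}=0$ and $(\ker \delta_\g)_{|\Lambda^3_d(\g)}=0$; the subscripts as you quote them, inherited verbatim from the paper's proof of Lemma \ref{kth}, are swapped, since $d_\g$ vanishes identically on $\Lambda^3_d(\g)=d_\g(\g^\perp)$.)
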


It follows that a manifold $M^n$ admitting a harmonic form $\om^3$ of type $\om_\g$ and having an $Aut(\g)$-connection
with skew-symmetric torsion is  in fact torsion-free.

\begin{theorem}\label{tor}  Let $M^n$ be a complete torsion-free $Aut(\g)$-manifold.  Then $M^n$ is either flat, or
$M^n$ is irreducible  and locally symmetric  of type I or IV.
\end{theorem}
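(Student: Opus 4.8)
The plan is to show that torsion-freeness forces the Riemann curvature tensor of $M^n$ to be, pointwise, a constant multiple of the canonical ``symmetric-space'' curvature built from the bracket of $\g$, and then to read off the conclusion from the de Rham and symmetric-space classifications. First I would translate torsion-freeness into a holonomy statement. By (\ref{rel1}) the vanishing of the intrinsic torsion $\xi$ is equivalent to $\nabla^{LC}\om^3=0$, so $\om^3$ is parallel and the holonomy group of $(M^n,K_\g)$ is contained in $Stab(\om_\g)\cap O(\g)=Aut(\g)$ by Theorem \ref{dyn1}.2. Passing to the identity component, the restricted holonomy algebra satisfies $\mathfrak{hol}\subseteq\ad(\g)$, where $\ad\colon\g\to\ad(\g)\subseteq\so(\g)=\Lambda^2(\g)$ is an isomorphism of Lie algebras and, under (\ref{decso}), the summand $\g$ is exactly $\ad(\g)=d_\g(\g)$ since $d_\g v=\ad_v$.

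Next I would encode the curvature algebraically. The curvature operator $\hat R\colon\Lambda^2(\g)\to\Lambda^2(\g)$ is symmetric with $\mathrm{Im}\,\hat R\subseteq\mathfrak{hol}\subseteq\ad(\g)=(\g^\perp)^\perp$; symmetry then forces $\hat R$ to vanish on $\g^\perp=\ker\delta_\g$. Since $d_\g$ is injective and $\delta_\g$ is surjective (Remark \ref{del1}, (\ref{lbr})), writing $R(X,Y)=\ad_{\Phi(X,Y)}$ I may factor $\Phi=\psi\circ\delta_\g$ for a unique self-adjoint $\psi\colon\g\to\g$ (self-adjointness again from symmetry of $\hat R$ and $d_\g^*=\delta_\g$). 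The first Bianchi identity $\sum_{\mathrm{cyc}}R(X,Y)Z=0$ becomes the purely algebraic condition
\[
[\psi[X,Y],Z]+[\psi[Y,Z],X]+[\psi[Z,X],Y]=0 \qquad (X,Y,Z\in\g),
\]
equivalently that the symmetric $4$-form $(X,Y,Z,W)\mapsto\la\psi[X,Y],[Z,W]\ra$ obeys the cyclic (Bianchi) relation; that is, $\hat R$ lies in the space $\Kk(\ad\g)$ of algebraic curvature tensors with holonomy in $\ad(\g)$. The choice $\psi=\mathrm{Id}$ gives, by the Jacobi identity, the curvature $R_0(X,Y)=\ad_{[X,Y]}$ of the bi-invariant metric, whose image is all of $\ad(\g)$.

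The heart of the argument, and the step I expect to be the main obstacle, is the algebraic rigidity statement: for a compact simple $\g$ with $\dim\g\ge 8$, every self-adjoint $\psi$ satisfying the displayed identity is scalar, equivalently $\Kk(\ad\g)=\R\cdot R_0$. I would prove this by regarding $\psi\mapsto\sum_{\mathrm{cyc}}[\psi[X,Y],Z]$ as an $\ad(\g)$-equivariant map $F\colon S^2(\g)\to\mathrm{Hom}(\Lambda^3(\g),\g)$, whose kernel is a submodule containing the line $\R\cdot\mathrm{Id}$ spanned by the Killing form; the task is then to show that $F$ is nonzero, hence injective, on every nontrivial irreducible summand of $S^2(\g)$. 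Either one finds a uniform argument forcing the above $4$-form to be $\ad$-invariant, or one checks the claim case by case using the decompositions of $S^2(\g)$ obtained as in Remark \ref{ten} and Table 1. This representation-theoretic verification is the technical crux.

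Granting this rigidity, at every point $\hat R=c\,R_0$ for a function $c$ on $M^n$. Since $R_0$ is the curvature of an Einstein metric with nonzero Einstein constant $\kappa$ (the Killing form being nondegenerate), the Ricci tensor equals $c\kappa\,K_\g$, and Schur's lemma (as $\dim\g\ge 3$) forces $c$ to be constant. If $c=0$ then $M^n$ is flat, i.e. a quotient of $\R^n$. If $c\ne 0$ then $\mathrm{Im}\,\hat R=\ad(\g)$, so $\mathfrak{hol}=\ad(\g)$ acts irreducibly on $\g=T_pM^n$ (the adjoint representation of a simple algebra is irreducible); hence $M^n$ is irreducible by the de Rham decomposition theorem. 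Moreover $R_0$ is $Aut(\g)$-invariant, hence parallel, so $\nabla^{LC}R=c\,\nabla^{LC}R_0=0$ and $M^n$ is locally symmetric. Finally the sign of $c$ (equivalently of the scalar curvature) separates the two remaining cases: an irreducible symmetric space whose isotropy representation is the adjoint representation of a compact simple Lie algebra is either a compact simple Lie group with its bi-invariant metric or its noncompact dual $G^{\C}/G$, which are precisely the types I and IV of the statement \cite{Helgason1978}. This completes the plan.
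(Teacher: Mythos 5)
Your architecture is sound at both ends, but the load-bearing middle step is missing. The entire argument funnels through the rigidity claim $\Kk(\ad(\g))=\R\cdot R_0$, i.e.\ that every self-adjoint $\psi$ satisfying the algebraic Bianchi identity is scalar, and you never prove it: you sketch two possible strategies (``a uniform argument'' or a case-by-case check over all compact simple $\g$) and explicitly defer the verification as ``the technical crux.'' This is a genuine gap rather than a routine omission. The claim is false for general holonomy algebras --- for $\h=\so(n)$ or $\u(n)$ the space of algebraic curvature tensors with values in $\h$ is huge --- so any proof must exploit the specific structure of $\ad(\g)$, and the proposed case-by-case route requires decomposing $S^2(\g)$ for every compact simple $\g$, for which no general formula is available (compare the analogous difficulty for $\g\otimes\g^\perp$ noted in Remark \ref{ten}.2). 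The statement itself is true: it is exactly the one-dimensionality of the space of curvature tensors of an irreducible, non-transitive (hence symmetric, by Simons' theory of holonomy systems) holonomy representation, and citing that theory would close the gap; but as written your representation-theoretic plan is a program, not a proof.

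It is worth noting that the paper's own proof bypasses curvature entirely and is much shorter. It argues at the level of the holonomy representation: if the holonomy algebra is a proper subalgebra $\h\subset\g$, write $\g=\h\oplus V$; simplicity of $\g$ forces $\ad_\g(\h)|_V\neq 0$, which is incompatible with the de Rham product decomposition, so the holonomy is either trivial (flat case) or all of $Ad(\g)$, acting irreducibly. Since $Ad(\g)$ (for $\dim\g\geq 8$) is not transitive on the sphere and is not on Berger's list, $M^n$ is locally symmetric, and Theorem \ref{3para} (via Takeuchi's Poincar\'e polynomial tables, which show that among the relevant symmetric spaces only simple Lie groups and their noncompact duals have $b_3\neq 0$) identifies the type as I or IV. The portions of your proposal surrounding the missing lemma are correct --- the equivalence of torsion-freeness with $\nabla^{LC}\om^3=0$ via (\ref{rel1}), the factorization $\Phi=\psi\circ\delta_\g$ through $\ker\delta_\g=\g^\perp$, the Schur argument forcing $c$ constant, and the parallelism of $R_0$ --- so the unproven rigidity lemma is the whole difficulty, and in substance it is equivalent to the Berger--Simons input the paper invokes.
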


\begin{proof} Let $\h$ be  a Lie subalgebra in $\g$.
We write $\g = \h \oplus V$,  where $V$ is orthogonal complement to $\h$. Since $\g$ is simple, the adjoint representation  $ad_\g (\h)$ restricted to $V$ is nontrivial. Taking into account the de Rham decomposition theorem,
we conclude that $M^n$ cannot have a  holonomy group $H$  strictly smaller than $Ad(\g)$ unless $H = Id$. Hence 
$M^n$ must be either irreducible and locally symmetric, or flat. Taking into account Theorem \ref{3para} we conclude that,
if $M^n$ is locally symmetric,  then it is of type I or IV. This completes the proof of Theorem \ref{tor}.
\end{proof}

\begin{remark}\label{ad1} 1. A complete list of algebraic types of $PSU(3)$-structures is given in Witt's  Ph.D.
Thesis \cite{Witt2004}, but Witt studied  only $PSU(3)$-structures corresponding to   a harmonic form $\om_{\su(3)}$. In \cite{Puhle2010} Puhle  studied a large class of  algebraic types of $PSU(3)$-structures in   great detail.\\
2. We could extend many results in this section to  the case of simple noncompact Lie group, using the theory
of real representation of  semisimple Lie algebras \cite{Onishik2004}. 
\end{remark}


\section{Cohomology theories for  $Aut^+ (\g)$-manifolds  equipped with a quasi-closed form   of type $\phi_0^l$}

In this section we also assume that $\g$ is a compact simple Lie algebra of dimension $n\ge 8$.
Let $Aut ^+(\g) : = Aut (\g) \cap GL ^+ (\g)$ and $\phi^l_0$  an $Aut ^+(\g)$-invariant $l$-form on $\g$. We study  necessary and sufficient conditions for an orientable $Aut(\g)$-manifold  to admit a multi-symplectic form
$\phi^l$ of type $\phi^l_0$ satisfying $d\phi^l =  \theta \wedge \phi^l$  (Theorem \ref{main3} and Lemma \ref{exi2}). Such a differential form  $\phi^l$ will be called {\it a quasi-closed form}.
Furthermore, 
we  construct  cohomology groups for two differential complexes $(\Om^* _{\phi^l_\pm}(M^n), d_{\pm})$  on  an $Aut^+(\g)$-manifold $M^n$ provided with a quasi-closed form $\phi^l$ of type $\phi^l_0$, see Proposition \ref{closed}. 
We consider a  spectral  sequence relating these cohomologies with the deRham cohomologies of $M^n$  (Theorem \ref{spec2}). We compute these groups in the case of 8-manifolds admitting a harmonic 3-form of type $\om_{\su(3)}$ in Example \ref{csu3}. We  introduce the notion of $\phi^l_-$-harmonic forms  (Definition \ref{har})
and   show some relations between  $\phi^l_-$-harmonic forms and  the group $H^*_{\phi^l_-} (M ^n)$ (Proposition \ref{har2}
and Lemma \ref{har1}). 

\begin{theorem}\label{main3} Assume that $\phi^l_0 \not=0$.

1. Any  $Ad(\g)$-invariant form  $\phi^l_0$ on $\g$ is  multi-symplectic, if $\g$ is a simple Lie algebra over $\C$ or over $\R$.

2. Let $\g$ be a  classical compact simple Lie algebra. Then the algebra $\Lambda _{Aut ^+} (\g)$ of $Aut ^+ (\g)$-invariant  forms on $\g$ coincides with the algebra $\Lambda_{\g} (\g)$ of $Ad(\g)$-invariant
forms on $\g$ except the case  $\g  = su (n+1)$ where $ 4 $ divides $ n (n +3)$.

3. Let $\g  = su (n+1)$ such that $ 4 $ divides $ n (n +3)$. Then $\Lambda  _{Aut ^+}(\g) = \Lambda ( x_{4k -1},  x_{4l+1} x_{4m +1})$ where
$x_{4p\pm 1}$ are primitive  generators  of  $\Lambda _\g (\g)$.
\end{theorem}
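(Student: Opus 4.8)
The plan is to treat the three assertions separately, reducing everything to the irreducibility of the adjoint representation and to the structure of the outer automorphism group acting on the primitive cohomology generators.

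For the first assertion I would argue exactly as in Lemma \ref{multi}. Given a nonzero $Ad(\g)$-invariant $l$-form $\phi^l_0$, the kernel $\ker L_{\phi^l_0} = \{v \in \g : v \rfloor \phi^l_0 = 0\}$ is an $Ad(\g)$-submodule of $\g$: for $g \in Ad(\g)$ and $v$ in the kernel, invariance of $\phi^l_0$ gives $(gv)\rfloor \phi^l_0 = g\cdot(v \rfloor \phi^l_0) = 0$. Since $\g$ is simple, its adjoint representation is irreducible (over $\C$ if $\g$ is complex, over $\R$ if $\g$ is a real form), so the kernel is either $0$ or all of $\g$; the latter would force $\phi^l_0 = 0$. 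Hence $L_{\phi^l_0}$ is injective and $\phi^l_0$ is multi-symplectic.

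For the second and third assertions I would first set up the comparison between $Aut^+(\g)$-invariants and $Ad(\g)$-invariants. Since $\Lambda_\g(\g)$ is the de Rham cohomology of $Ad(\g)$ (Remark \ref{hit}.4) and $Aut(\g)/Ad(\g) \cong Aut(D(\g))$ (Remark \ref{hit}.2), and since $Ad(\g)$ is connected and therefore contained in $Aut^+(\g)$, every coset of $Ad(\g)$ in $Aut(\g)$ lies entirely in $GL^+(\g)$ or entirely in its complement, according to the sign of $\det$ of any representative. Writing $\Gamma^+ := Aut^+(\g)/Ad(\g) = \ker\bigl(Aut(D(\g)) \to \{\pm 1\}\bigr)$ for the kernel of the determinant-sign homomorphism, one gets $\Lambda_{Aut^+}(\g) = \Lambda_\g(\g)^{\Gamma^+}$. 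Thus the whole problem reduces to (i) computing $\det$ of representative diagram automorphisms acting on $\g$, and (ii) for a nontrivial $\Gamma^+$, computing the induced action on the primitive generators of $\Lambda_\g(\g)$. For step (i) I would run through the classical types. For $B_n$ and $C_n$ the diagram has no symmetry, so $Aut(D(\g))$ is trivial and $\Lambda_{Aut^+} = \Lambda_\g$ at once. For $D_n$ with $n \ge 5$ the outer automorphism is conjugation by a reflection $g \in O(2n)$ acting on $\g = \so(2n) = \Lambda^2(\R^{2n})$ as $\Lambda^2(g)$, whose determinant is $(\det g)^{2n-1} = -1$; hence this coset has negative determinant, $\Gamma^+$ is trivial, and again $\Lambda_{Aut^+} = \Lambda_\g$. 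For $A_n = \su(n+1)$ the nontrivial diagram automorphism is represented by complex conjugation $\sigma: X \mapsto \bar X$; its $+1$-eigenspace is $\so(n+1)$ (real skew-symmetric matrices, dimension $n(n+1)/2$) and its $-1$-eigenspace consists of the purely imaginary $iS$ with $S$ real symmetric traceless (dimension $n(n+3)/2$), so $\det \sigma = (-1)^{n(n+3)/2}$. Therefore $\sigma \in Aut^+(\g)$ precisely when $4 \mid n(n+3)$; outside this case $\Gamma^+$ is trivial and $\Lambda_{Aut^+} = \Lambda_\g$, completing the second assertion for type $A$. The one delicate classical case is $D_4 = \so(8)$, where $Aut(D(\g)) = \Sigma_3$ contains order-three triality elements; being of odd order they have determinant $1$ and hence lie in $Aut^+(\g)$, so here one must separately analyze the triality action on $\Lambda_\g(\so(8))$ through its permutation of the Pfaffian (Euler) class against the Pontryagin classes. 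I expect this to be the main obstacle in making the statement for the whole classical series uniform.

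For step (ii), which proves the third assertion, I would identify the primitive generators of $\Lambda_\g(\su(n+1)) = \Lambda(x_3,x_5,\dots,x_{2n+1})$ with the transgressions of the invariant polynomials $P_k(X) = \mathrm{tr}(X^k)$, $k = 2,\dots,n+1$, and track the sign under $\sigma$. Using $\bar X = -X^T$ for $X \in \su(n+1)$ one computes $\mathrm{tr}(\bar X^k) = (-1)^k \mathrm{tr}(X^k)$, so after normalizing each $P_k$ to be real-valued, $\sigma$ fixes the generator when $k$ is even and negates it when $k$ is odd; equivalently $\sigma$ fixes the generators of degree $4m-1$ and negates those of degree $4m+1$. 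Consequently $\sigma$ multiplies a monomial by $(-1)^{\,\text{(number of anti-invariant factors)}}$, and $\Lambda_{Aut^+}(\g) = \Lambda_\g(\g)^\sigma$ is generated by the fixed generators $x_{4k-1}$ together with the quadratic products $x_{4l+1}x_{4m+1}$ of the anti-invariant ones, which is exactly the claim of part 3. The only real care needed here is the transgression and sign bookkeeping identifying the primitive generators with the $\mathrm{tr}(X^k)$ and confirming the parity of their degrees.
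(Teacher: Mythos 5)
Your treatment of assertion 1 is exactly the paper's proof: invariance makes $\ker L_{\phi^l_0}$ an $Ad(\g)$-submodule of $\g$, and irreducibility of the adjoint representation of a simple Lie algebra forces the kernel to be $0$ or $\g$, the latter being excluded since $\phi^l_0\neq 0$. For assertions 2 and 3 your overall route is also the paper's: reduce to the action of outer automorphisms on $\Lambda_\g(\g)$, decide by a determinant computation which outer representatives are orientation preserving, and then track signs on the primitive generators. The details differ harmlessly in two places. For $\su(n+1)$ the paper finds the signs by pairing $\sigma(x_{2i+1})$ with the unit $(2i+1)$-vector spanning the orthogonal complement of $\su(i)$ in $\su(i+1)\subset\su(n+1)$, whereas you use $\mathrm{tr}(\bar X^k)=(-1)^k\mathrm{tr}(X^k)$ and transgression; both yield that $x_{4m+1}$ is negated and $x_{4m-1}$ is fixed, and your eigenspace count $\det\sigma=(-1)^{n(n+3)/2}$ usefully supplies the computation the paper dismisses with ``we can check easily.'' Likewise your $\det\Lambda^2(g)=(\det g)^{2n-1}=-1$ for $D_n$, $n\geq 5$, matches the paper's conclusion that $\sigma=Ad(\mathrm{diag}(1,\dots,1,-1))$ reverses orientation, so $Aut^+(\so(2n))=Ad(\so(2n))$ and the invariants are unchanged; types $B_n$, $C_n$ are trivial for both of you.

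The genuine gap is $\g=\so(8)$, which assertion 2 covers but which you explicitly leave open (``I expect this to be the main obstacle''). Your determinant observation is correct --- the order-3 triality elements do lie in $Aut^+(\so(8))$ --- so establishing $\Lambda_{Aut^+}(\so(8))=\Lambda_{\so(8)}(\so(8))$ requires actually proving that triality acts trivially on $\Lambda(x_3,x_7,x_{11},y_7)$, and your argument stops precisely at that point; flagging the difficulty is not resolving it, so as written assertion 2 is unproved for one classical algebra. The paper closes this case by observing that each transposition $\sigma_i$ generating $\Sigma_3=Aut(\so(8))/Ad(\so(8))$ is conjugate inside $Aut(\so(8))$ to the reflection automorphism $\sigma=Ad(\mathrm{diag}(1,\dots,1,-1))$, and concluding from this that $Aut^+(\so(8))$ acts as the identity on the invariant algebra. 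If you complete your version along these lines, be aware that the conjugacy of the transpositions alone is not quite enough: conjugate elements induce conjugate, not necessarily equal, involutions on the two-dimensional space of degree-7 primitives spanned by $x_7$ and $y_7$, and the product of two distinct reflections there would be a nontrivial rotation. So one must additionally verify that the three induced involutions coincide --- which is equivalent to the triality rotations acting trivially, i.e.\ exactly the analysis of how the triality orbit of the Pfaffian generator $y_7$ sits against $x_7$ that you yourself proposed. Your instinct about where the delicacy lies is therefore the right one, but the case must be carried out, not conjectured.
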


\begin{proof}   Recall that a form $\phi^l_0\in \Lambda ^l(\g^*) \cong \Lambda^l(\g)$ is  multi-symplectic if and only  the map
$$L_{\phi^l_0} : \g \to \Lambda ^{l-1} (\g):  v \mapsto v\rfloor  \phi^l_0$$
is injective.  Since $\phi^l_0$ is $Ad(\g)$-invariant,  the kernel of $L_{\phi^l_0}$ can be either  $0$ or  $\g$.
Since $\phi^l_0\not =0$ there exists $v\in \g$ such that  $v \rfloor \phi^l_0 \not = 0$.  Hence  $\ker L_{\phi^l_0} =0$,  this proves
the first  assertion  of Theorem \ref{main3}.

Let  $\g  = \su (n +1)$. It is known that $Aut (\g)$ is generated by $Ad(\g)$ and  the complex conjugation $\sigma$
on $\su(n+1)$.
We can check easily that  $\sigma$  is orientation preserving if and only if $4$  divides $n(n+3)$, see also Remark
\ref{e6} below.
Clearly $\sigma$ acts on the primitive elements $x_{2i +1}$  by multiplying $x_{2i+1}$ with $\pm 1$. To find   exactly the sign
of this multiplication we note that
$$ (\sigma (\phi^l_0), v) = (\phi^l_0,\sigma (v))$$
for any $l$-vector $v\in \Lambda ^l (\su (n+1))$.  Now letting $v$ be  the unit $(2i+1)$-vector associated with the orthogonal complement
to $\su (i)$ in $\su (i +1) \subset \su (n+1)$, we conclude that 
$$\sigma (x_{2i +1}) = - x_{2i +1}, \text{ if } i = 2l,$$
$$\sigma (x_{2i +1}) =  x_{2i +1} \text{  if } i = 2l-1.$$
 This  proves  the third assertion  and  the part of  the second assertion  of Theorem \ref{main3} concerning  $\g = \su (n +1)$.

Let  $\g = \so (2n)$. It is known that $Aut (\g)$ is generated by $Ad(\g)$ and the element $ \sigma = Ad (diag (1, \cdots, 1, -1)) \in Ad (O (2n))$.  Clearly  $\sigma$  reserves the orientation.  Hence $Aut ^+ (\g ) = Ad (\g)$,
if $\g = \so (2n)$ and $n \not =  4$.   Combining with  Remark \ref{hit}.2  we  obtain easily the second assertion  of Theorem \ref{main3} for the case $ \g \not = \so (8)$.

Note that    the  group $Aut  (\so (8))$ is generated by $Ad(\so(8))$ and $\Sigma _3$,  see Remark \ref{hit}.2.
Since $\Sigma _3$ is generated by $\sigma _i$, $i =\overline{1, 3}$, which is conjugate by an element in $Aut(\so(8))$  to  the element $\sigma= Ad (diag (1, \cdots, 1, -1)) \in Ad (O (8))$, it follows that  $Aut ^+ (\so (8))$  acts on 
$\Lambda _{\so (8)} (\so (8))$ as identity. This completes the proof of Theorem \ref{main3}.
\end{proof}

\begin{remark}\label{e6}  It has been observed by Todor Milev that an outer automorphism $\sigma$ of a simple
compact Lie algebra $\g$ is orientation preserving, if and only if the action of $\sigma$  on the Dynkin diagram
$D(\g)$ is a composition of even number of permutations.  To prove this statement for  classical compact Lie algebras he used an argument similar  to our argument  above. For the case $E_6$  he proved this assertion with a help
of a  computer program written by himself.  In particular $Aut ^+ (E_6) = Aut (E_6)$. We conjecture that   $\Lambda _{Aut ^+} (E_6) = \Lambda (x_3,  x_{11}, x_{15}, x_9 x_{17}, x_{23})$.
\end{remark}

Now  assume that $\phi ^l_0 \in \Lambda _{Aut ^+}(\g)$.  Recall that $M^n$ admits  a differential form $\phi^l$ of type
$\phi^l_0$, which by Theorem \ref{main3} is multi-symplectic. Let $\xi$ be the intrinsic torsion of the $Aut^+(\g)$-structure on $M^n$. As in the previous sections we denote by $\om ^3$ the Cartan 3-form on $M^n$.

\begin{lemma} \label{exi2} 1. The values of $d\phi^l$ and $d^* \phi^l$ depend linearly on the intrinsic torsion $\xi$ of $M^n$.
\begin{equation}
d{\phi^l} (x) =  \sum _i  e_i \wedge \rho_*(\xi(e_i))( \phi^l(x)), \label{dlx} 
\end{equation}
\begin{equation}
d^*{\phi^l} (x)  =  \sum _i  e_i \rfloor \rho_*(\xi(e_i))(\phi^l(x)), \label{delx} 
\end{equation}
where $(e_i)$  is an orthonormal basis  in $T_xM^n$, see   also (\ref{do1}), (\ref{do2}).\\
2. If $d\phi^l = \phi^l \wedge \theta$ then $\theta$ is defined uniquely by the following formula
\begin{equation}
\theta =  c(\phi^l_0) * ( \phi \wedge * d\phi),\label{eas1}
\end{equation}
for some nonzero constant $c(\phi^l_0)$.\\
3. If $\om^3$ is quasi-closed, i.e. $d\om^3 = \om^3 \wedge \theta$, then  $\om^3$ is locally conformally closed: $d\theta = 0$.
\end{lemma}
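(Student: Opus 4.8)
The plan is to treat the three assertions in turn, reducing the first two to representation-theoretic facts already available and reserving the real work for the third.

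For part~1 I would specialize the computation behind Proposition~\ref{main1}. Since $\phi^l$ has type $\phi^l_0$ with $\phi^l_0$ an $Aut^+(\g)$-invariant, hence $Ad(\g)$-invariant, form, the $Aut^+(\g)$-connection $\hat\nabla$ satisfies $\hat\nabla\phi^l=0$, so $\nabla^{LC}_{e_i}\phi^l=-\rho_*(\eta(e_i))\phi^l$. The $\g$-component $\eta^\g$ of the torsion acts trivially on $\phi^l_0$ because $\phi^l_0$ is $Ad(\g)$-invariant, so only the intrinsic torsion $\xi\in\g\otimes\g^\perp$ survives and $\nabla^{LC}_{e_i}\phi^l=\rho_*(\xi(e_i))\phi^l$, exactly as in (\ref{rel1}). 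Substituting this into the pointwise identities (\ref{kn1})--(\ref{kn2}) yields (\ref{dlx}) and (\ref{delx}); in particular both depend $\R$-linearly on $\xi$.

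For part~2 I would argue that $\theta$ is forced by $\phi^l_0$ alone. Consider the pointwise operator $R\colon\theta\mapsto *(\phi^l_0\wedge *(\phi^l_0\wedge\theta))$ on $\Lambda^1(\g)\cong\g$. Since $\phi^l_0$ is $Ad(\g)$-invariant and the Hodge star is $SO(\g)$-equivariant (with $Ad(\g)\subset SO(\g)$ for the minus-Killing metric), $R$ is $Ad(\g)$-equivariant; as $\g$ is simple its adjoint module is irreducible, so Schur's lemma gives $R=\lambda\,\mathrm{Id}$ for a scalar $\lambda$. This scalar is nonzero: exterior multiplication $\theta\mapsto\phi^l_0\wedge\theta$ has an $Ad(\g)$-invariant kernel, which cannot be all of $\g$ because $l<\dim\g$ forces $\phi^l_0\wedge\theta\neq0$ for some $\theta$, hence the kernel is $0$; as contraction by the multivector dual to $\phi^l_0$ is the metric adjoint of this multiplication and $R$ equals that contraction applied to $\phi^l_0\wedge\theta$ up to sign, $\langle R\theta,\theta\rangle=\pm\|\phi^l_0\wedge\theta\|^2\neq0$. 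Plugging $d\phi^l=\phi^l\wedge\theta$ into $*(\phi^l\wedge *d\phi^l)$ then returns $\lambda^{-1}\theta$ up to sign, which simultaneously proves uniqueness and identifies $c(\phi^l_0)=\pm\lambda^{-1}$, giving (\ref{eas1}).

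For part~3 I would start from $d\om^3=\om^3\wedge\theta$ and apply $d$. Since $d(\om^3\wedge\theta)=d\om^3\wedge\theta-\om^3\wedge d\theta=\om^3\wedge\theta\wedge\theta-\om^3\wedge d\theta$ and $\theta\wedge\theta=0$, we obtain $\om^3\wedge d\theta=0$ pointwise. It then suffices to prove the key algebraic lemma that exterior multiplication $L_2\colon\beta\mapsto\om_\g\wedge\beta$ is injective on $\Lambda^2(\g)$, and this is where the main difficulty lies. I would use that $L_2$ is $Ad(\g)$-equivariant, so $\ker L_2$ is a submodule of $\Lambda^2(\g)=\g\oplus\g^\perp$ of (\ref{decso}); by Remark~\ref{ten} both summands are irreducible and non-isomorphic, so it is enough to show neither lies in $\ker L_2$. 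For the copy of $\g$, if $\om_\g\wedge(X\rfloor\om_\g)=0$ then contracting with $X$ and using $\iota_X\iota_X=0$ gives $(X\rfloor\om_\g)\wedge(X\rfloor\om_\g)=0$, i.e. the skew form $\ad_X$ has rank $\le 2$; a regular $X$ makes $\ad_X$ of rank $\dim\g-\mathrm{rank}\,\g>2$, a contradiction. For $\g^\perp$ I would take an orthonormal commuting pair $u,v$ in a Cartan subalgebra (possible as $\mathrm{rank}\,\g\ge2$), so $u\wedge v\in\ker\delta_\g=\g^\perp$, and a root triple $\alpha+\beta+\gamma=0$ then contributes a surviving, uncancelled monomial $e^\alpha\wedge e^\beta\wedge e^\gamma\wedge u\wedge v$ in $\om_\g\wedge(u\wedge v)$, whence $\om_\g\wedge(u\wedge v)\neq0$. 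Thus $\ker L_2=0$, and $\om^3\wedge d\theta=0$ forces $d\theta=0$. The genuine obstacle is precisely this injectivity: the $\g$-summand falls to the clean contraction trick, but ruling out $\g^\perp\subset\ker L_2$ demands an explicit nonvanishing wedge, where I would be most careful about cancellations among monomials.
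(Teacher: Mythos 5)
Your proof is correct, and for the first two assertions it runs essentially along the paper's lines: part 1 is read off from (\ref{rel1}), (\ref{kn1}), (\ref{kn2}) exactly as you do (your intermediate sign discrepancy between $\eta$ and $\xi$ is only the convention for how $\rho_*$ acts on forms versus vectors, and the final formulas agree with the paper's), and for part 2 the paper likewise applies Schur's lemma to the $Ad(\g)$-equivariant map $L_{\phi^l}\colon\g\to\Lambda^{l+1}(\g)$, noting that the adjoint module is absolutely irreducible because $\g^\C$ is simple — a point you should state when invoking real Schur, though your operator $R$ is in any case self-adjoint, so irreducibility alone forces $R=\lambda\,\mathrm{Id}$; your identity $\la R\theta,\theta\ra=\pm\|\phi^l_0\wedge\theta\|^2$ makes the nonvanishing of $c(\phi^l_0)$ more explicit than the paper does. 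Where you genuinely diverge is part 3. After the common step $\om^3\wedge d\theta=0$, the paper simply cites Corollary \ref{omg}, formula (\ref{2om}) (equivalently, $L_{\om_\g}$ is injective on $\Lambda^2(\g)$), whose proof there is different from yours: for the summand $d_\g(\Lambda^1(\g))$ it uses the anticommutation rule (\ref{sup0}) together with the cohomological fact $b_4(\g)=0$ for a compact simple Lie algebra, and for $\g^\perp$ it exhibits a nonzero summand of $L_{\om_\g}(\delta_\g(h_\alpha\wedge h_\beta\wedge e_\alpha))$ by a structure-constant computation. Your replacements are sound and more elementary: the contraction trick $X\rfloor\bigl(\om_\g\wedge(X\rfloor\om_\g)\bigr)=(X\rfloor\om_\g)\wedge(X\rfloor\om_\g)$ reduces the $\g$-summand to $\mathrm{rank}\,\ad_X=\dim\g-\mathrm{rank}\,\g\ge 6>2$ for regular $X$ (valid under the section's standing hypothesis $\dim\g\ge 8$; it fails only for $\su(2)$, which is excluded), and your witness $u\wedge v\in\ker\delta_\g=\g^\perp$ with $u,v$ in a Cartan subalgebra works because distinct basis monomials of $\om_\g$ free of the directions $u,v$ remain distinct after wedging with $u\wedge v$, so no cancellation can occur, and a purely root-space monomial with nonzero coefficient exists for any root triple $\alpha+\beta+\gamma=0$. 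Your route buys a self-contained argument independent of the forward reference to Corollary \ref{omg} and of the input $b_4(\g)=0$; you also make explicit that $d_\g(\Lambda^1(\g))\cong\g$ and $\g^\perp$ are non-isomorphic irreducibles (Table 1, Remark \ref{ten}), which rules out diagonal submodules of $\ker L_{\om_\g}$ — a point the paper's reduction to the two summands leaves tacit.
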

\begin{proof}
The first assertion of Lemma \ref{exi2} is a direct consequence of (\ref{rel1}), (\ref{kn1}) and (\ref{kn2}).
In particular, the existence of  a  closed form   or a quasi closed form of type $\phi^l_0$ on
$M^n$ is defined entirely by the algebraic type of the intrinsic torsion  $\xi$ of $M^n$.  

Next we observe that the linear map $L_{\phi^l}: \g \to \Lambda^{l+1} (\g), \: \theta \mapsto \phi^l\wedge \theta$
is an $Ad(\g)$-equivariant map between two $Ad(\g)$-irreducible  modules, moreover $L_{\phi^l}$ extends linearly to
the  complexified irreducible modules of $\g$, since $\g^\C$ is simple. Applying the Schur Lemma we obtain the second
assertion of Lemma \ref{exi2}.

Note that $d^2 \om^3 = 0 = \om^3\wedge \theta\wedge \theta - \om^3 \wedge d\theta =- \om^3 \wedge d\theta$. By (\ref{2om}) proved below  $d\theta = 0$. This completes the proof of Lemma \ref{exi2}.
\end{proof}


\medskip

In what follows we single  out  several  interesting $\g$-modules   associated
with an $Ad (\g)$-invariant  form $\phi^l_0$ in the  exterior algebra $\Lambda (\g)$.
If $\phi^l_0$ is also  $Aut^+ (\g)$- invariant, then  these modules  generate  associated   $Aut ^+ (\g)$-invariant sub-bundles in $M^n$.

Let $\phi^l$ be an $Ad(\g)$-invariant $l$-form on $\g$ of degree  $3\le l\le n-3$ (for  simplicity we drop a lower index $0$ at $\phi^l_0$   when we   are dealing exclusively with $l$-forms on $\g$). For each  $0\le k \le n$ we define the
following linear operator
$$L_{\phi^l} : \Lambda^k (\g) \to \Lambda ^{k +l} (\g) , \:  \gamma \mapsto  \phi ^l \wedge \gamma.$$

Now we look  at a decomposition of $\Lambda^k (\g)$  under the action of $L_{\phi^l}$  for $0\le k \le n$.  Set

\begin{equation}
\Lambda ^k_{\phi^l_+ }: = \{ \beta \in \Lambda ^k (\g)|\,L_{\phi^l} (\beta) = 0 \},
\label{phip}
\end{equation}
\begin{equation}
\Lambda ^k _{\phi^l_-}: = \{ \gamma \in \Lambda ^k (\g)|\,\la \gamma, \beta \ra = 0\,  \forall  \beta \in \Lambda ^k_+ \},
\label{phim}
\end{equation}
 
where  $\la, \ra$  denotes the induced   inner product on $\Lambda ^k (\g)$.  Denote by $*$ the Hodge
operator  on $\g$  associated with the Killing metric  and some preferred orientation on $\g$.  

\begin{proposition}\label{dec1} Assume that $0\le k  \le n$. Then
\begin{equation}
\Lambda ^k_{\phi^l_-} = *L_{\phi^l} (\Lambda ^{n-l-k}(\g)) = * L_{\phi^l} ( \Lambda ^{n-l-k} _{\phi^l_-}).
\label{dual1}
\end{equation}
The space $\Lambda ^k_{\phi^l_-}$ is not zero  if  and only if $(n-l)\ge k$.  The  operator $*L_{\phi^l}$  induces
an isomorphism between  $\Lambda^k_{\phi ^l_-} $  and $\Lambda ^{n-l-k}_{\phi ^l_-}$.  In particular we have
\begin{eqnarray}
\Lambda ^0_{\phi^l_-}  \cong \Lambda^l_{\phi^l_-} = \R.\label{p0l}\\ 
\Lambda ^1_{\phi^l_-}\cong \Lambda  ^{l-1}_{\phi^{n-l}_-} \cong \Lambda ^1(\g).\label{p1l}\\
\Lambda ^{k} _{\phi^l_-} =  0 \text{ for }  n-l +1\le k \le n.\label{pkl}
\end{eqnarray}
Furthermore  we have the following identities
\begin{equation}
(-1)^{l+1}L_{\phi^l} d_\g + d_\g L_{\phi^l} = 0 = (-1)^{l+1}L_{\phi^l} \delta_\g + \delta_\g L_{\phi^l}.\label{sup0}
\end{equation}
Hence operators $L_{\phi^l}$  preserve the subspaces $\Lambda _{harm}(\g), \Lambda _{d}(\g), \Lambda_\delta(\g)$.
\end{proposition}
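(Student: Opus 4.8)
The plan is to deduce everything except the final display \eqref{sup0} from two elementary facts. By definition $\Lambda^k_{\phi^l_-}$ is the orthogonal complement of $\Lambda^k_{\phi^l_+}=\ker\bigl(L_{\phi^l}\colon\Lambda^k(\g)\to\Lambda^{k+l}(\g)\bigr)$, so it is canonically isomorphic to $\mathrm{Im}\bigl(L_{\phi^l}|_{\Lambda^k}\bigr)$. The second fact is the Hodge--adjoint relation $L_{\phi^l}^{*}=\pm *L_{\phi^l}*$, i.e.\ the adjoint of wedging with $\phi^l$ is signed contraction (this is a routine computation from $\langle\phi^l\wedge\alpha,\beta\rangle\,\mathrm{vol}=\phi^l\wedge\alpha\wedge *\beta$). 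Since the orthogonal complement of a kernel is the image of the adjoint,
\[\Lambda^k_{\phi^l_-}=\mathrm{Im}\bigl(L_{\phi^l}^{*}|_{\Lambda^{k+l}}\bigr)=*L_{\phi^l}*\bigl(\Lambda^{k+l}(\g)\bigr)=*L_{\phi^l}\bigl(\Lambda^{n-l-k}(\g)\bigr),\]
which is the first equality in \eqref{dual1}; the second holds because $L_{\phi^l}$ kills $\Lambda^{n-l-k}_{\phi^l_+}$, so only the $(-)$–part contributes.

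For the dimension statements I would first record the elementary lemma that, for $\phi^l\neq0$, the map $L_{\phi^l}\colon\Lambda^k(\g)\to\Lambda^{k+l}(\g)$ is nonzero exactly when $k+l\le n$: choosing a basis monomial $e^I$ ($|I|=l$) occurring in $\phi^l$ and a disjoint monomial $e^J$ ($|J|=k$), the product $\phi^l\wedge e^J$ contains $e^{I\cup J}$ with nonzero coefficient. Combined with $\Lambda^k_{\phi^l_-}\cong\mathrm{Im}(L_{\phi^l}|_{\Lambda^k})$ this yields $\Lambda^k_{\phi^l_-}\neq0\Leftrightarrow k\le n-l$, which is the nonvanishing claim and \eqref{pkl}. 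The isomorphism $*L_{\phi^l}\colon\Lambda^{n-l-k}_{\phi^l_-}\to\Lambda^{k}_{\phi^l_-}$ is then immediate, since $*$ is an isometry, $L_{\phi^l}$ is injective on the orthocomplement $\Lambda^{n-l-k}_{\phi^l_-}$ of its kernel, and the image is $\Lambda^k_{\phi^l_-}$ by \eqref{dual1}. The special cases \eqref{p0l}--\eqref{p1l} are read off by specialising $k$: for $k=0$ one has $\Lambda^0_{\phi^l_-}=\Lambda^0(\g)=\R$, matched under the isomorphism with the one–dimensional top piece $*(\R\,\phi^l)$; for $k=1$ one observes that $\ker(L_{\phi^l}|_{\Lambda^1})$ is an $Ad(\g)$–invariant subspace of the irreducible module $\g^{*}\cong\g$, hence (being proper, as $1+l\le n$) is zero, so $\Lambda^1_{\phi^l_-}=\Lambda^1(\g)$; the same argument applied to the invariant dual form $*\phi^l\in\Lambda^{n-l}(\g)$, together with \eqref{dual1}, identifies $\Lambda^{l-1}_{\phi^{n-l}_-}$ with $\Lambda^1(\g)$ as well.

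For the superalgebra relations \eqref{sup0} the first identity is one line: $d_\g$ is a graded derivation and every $Ad(\g)$–invariant form is a Chevalley--Eilenberg cocycle (indeed harmonic), so $d_\g\phi^l=0$ gives $d_\g(\phi^l\wedge\gamma)=(-1)^l\phi^l\wedge d_\g\gamma$. The second identity is the crux, and I would emphasise that it does \emph{not} follow from the first by adjunction: taking adjoints of the first identity produces only the commutation of $\delta_\g$ with the contraction $\iota_{\phi^l}=L_{\phi^l}^{*}$, not with the wedge $L_{\phi^l}$, and $\delta_\g$ is not a derivation. Instead I would use the other half of harmonicity, $\delta_\g\phi^l=0$. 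Realising $(\Lambda^\bullet(\g),d_\g,\delta_\g)$ as the complex of left–invariant forms on a compact group with bi–invariant metric turns $\phi^l$ into a parallel form, and then
\[\delta_\g(\phi^l\wedge\gamma)=(-1)^l\phi^l\wedge\delta_\g\gamma-\sum_i(e_i\rfloor\phi^l)\wedge\nabla_{e_i}\gamma ,\]
so the identity reduces to the vanishing of the cross term $\sum_i(e_i\rfloor\phi^l)\wedge\nabla_{e_i}\gamma$. As $\nabla_{e_i}$ acts as $\frac12$ the infinitesimal $ad$–action, this is a structure–constant sum that vanishes by the Jacobi identity and the $Ad(\g)$–invariance of $\phi^l$; for $\phi^l=\om_\g$ it is precisely the total antisymmetrisation of $\sum_i f_{ijk}f_{imp}$, killed by Jacobi. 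Granting \eqref{sup0}, $L_{\phi^l}$ commutes up to sign with both $d_\g$ and $\delta_\g$, hence preserves $\Lambda_{d}(\g)=\mathrm{Im}\,d_\g$ and $\Lambda_{\delta}(\g)=\mathrm{Im}\,\delta_\g$, and therefore their orthogonal complement $\Lambda_{harm}(\g)$ in the Hodge decomposition.

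The main obstacle is thus exactly this second identity of \eqref{sup0}: it is the only non–formal step, requiring both the coclosedness $\delta_\g\phi^l=0$ and the Jacobi/structure–constant cancellation carried out for a \emph{general} invariant $\phi^l$ rather than only for $\om_\g$. Everything else is the linear–algebra package around \eqref{dual1} together with the single representation–theoretic input that $\g^{*}$ is irreducible.
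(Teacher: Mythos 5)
Your argument is correct, and on the one genuinely non-formal point --- the second identity in \eqref{sup0} --- it takes a different route from the paper. For \eqref{dual1} the two proofs are the same computation in different packaging: the paper checks both inclusions by hand through the pairing $\la \beta, *(\phi^l\wedge\gamma)\ra = \la vol,\, \beta\wedge\phi^l\wedge\gamma\ra$, which is exactly your adjunction $(\ker L_{\phi^l})^{\perp} = \mathrm{Im}\,(L_{\phi^l}^{*})$ with $L_{\phi^l}^{*} = \pm\, * L_{\phi^l}\, *$; your monomial lemma ($L_{\phi^l}|_{\Lambda^k}\neq 0$ iff $k+l\le n$) supplies a detail the paper only asserts, namely the existence of $\gamma\in\Lambda^{n-l-k}(\g)$ with $\phi^l\wedge\gamma\neq 0$, and your irreducibility argument for \eqref{p1l} coincides with the paper's. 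For the second identity in \eqref{sup0} the paper quotes Koszul's formula (Lemma \ref{snl1}) expressing $\delta_\g(A\wedge B)$ through the Schouten--Nijenhuis bracket and kills the cross term by $\{\phi^l,B\}_\g=0$; your cross term $\sum_i(e_i\rfloor\phi^l)\wedge\nabla_{e_i}\gamma$ from the parallel-form computation on the group is, up to a constant, that same Schouten bracket, so the content is equivalent, and your observation that adjunction cannot derive the second identity from the first explains precisely why some such extra input is unavoidable (both routes also use $\delta_\g\phi^l=0$, tacit in the paper, explicit in yours). What the citation buys the paper is the cross-term vanishing for \emph{general} $\gamma$ and $\phi^l$; in your text this is verified only for $\om_\g$ on $\Lambda^1$, but the sketch completes in two lines: $T(\gamma):=\sum_i(e_i\rfloor\phi^l)\wedge ad_{e_i}\gamma$ satisfies $T(\alpha\wedge\beta)=T(\alpha)\wedge\beta+(-1)^{(l-1)\deg\alpha}\,\alpha\wedge T(\beta)$, vanishes on $\Lambda^0$, and vanishes on $\Lambda^1$ because invariance of the Killing form gives $\sum_i\la [e_i,u],v\ra e_i=[u,v]$, whence $T(u^\flat)=\pm\, ad_u\phi^l=0$; hence $T\equiv 0$ on all of $\Lambda(\g)$. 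Two further remarks: your reading of \eqref{p0l} as $\Lambda^{n-l}_{\phi^l_-}=\R\cdot *\phi^l$ silently corrects what must be a typo (as printed, $\Lambda^{l}_{\phi^l_-}=\R$ contradicts Example \ref{su3} for $\g=\su(3)$, $l=3$, and the proof of Lemma \ref{har1} uses the corrected form); and your closing inference ``preserves $\mathrm{Im}\,d_\g$ and $\mathrm{Im}\,\delta_\g$, hence their orthogonal complement'' is not valid as stated, since $L_{\phi^l}$ is not an isometry --- but the conclusion is immediate anyway, because $\Lambda_{harm}(\g)=\ker d_\g\cap\ker\delta_\g$ and \eqref{sup0} shows $L_{\phi^l}$ preserves both kernels.
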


\begin{proof} First we show that $*L_{\phi^l} (\Lambda ^{n-l-k}(\g)) \subset \Lambda ^k_{\phi,-}$.
Let $\beta \in \Lambda^k_{\phi^l_+}$.
Then
$$ \la \beta, *(\phi^l \wedge \gamma) \ra = \la  vol,  \beta \wedge \phi^l\wedge  \gamma \ra = 0$$
since $\beta \wedge  \phi^l = 0$. Hence  $* (\phi^l \wedge \gamma)\in \Lambda ^k _{\phi_-}$.  

Now we show that $\Lambda ^k _{\phi^l_-} \subset *L_{\phi^l} (\Lambda ^{n-l-k}(\g)) $. 
It suffices to show that  the orthogonal complement of $*L_{\phi^l} (\Lambda ^{n-l-k}(\g))$
in $\Lambda^k(\g)$  is a subset of $\Lambda ^k _{\phi^l_+}$. Clearly 
$$ \beta \in (*L_{\phi^l} (\Lambda ^{n-l-k}(\g)))^\perp \LLR \la \beta , *( \phi^l \wedge \gamma) \ra = 0 \text{ for  all } \gamma\in \Lambda ^{n-l-k} (\g). $$
Then 
$$ \beta \wedge\phi^l\wedge \gamma = 0 \text{ for  all } \gamma \in \Lambda ^{n-l-k} (\g) .$$
Hence
$$ \beta \wedge  \phi^l = 0,$$ 
which by definition  (\ref{phip}) implies that $\beta  \in \Lambda ^k _{\phi^l_+}$.
This proves the first  assertion  (\ref{dual1}) of Proposition \ref{dec1}. 

The second  assertion follows from the definition (\ref{phim}) of 
$\Lambda ^k_{\phi_-}$, taking into account the  existence of  a  nonzero element $\gamma \in\Lambda ^{n-l-k}(\g)$ such that $\phi ^l \wedge \gamma \not = 0$, if $n-l-k \ge 0$. 

The third  assertion follows from (\ref{dual1}), (\ref{phip}), (\ref{phim}).
Clearly the next assertions (\ref{p0l}) and  (\ref{pkl}) are direct consequences of the second and the third assertion. It remains to prove
(\ref{p1l}).  Note that $\Lambda ^1_{\phi^l_-}$ is nonempty by the second assertion.   Since $\Lambda^1 (\g)$ is an irreducible   $\g$-module,  we  get (\ref{p1l}) from the third assertion of Proposition \ref{dec1}.

Next, using $d_\g \phi^l = 0$ we get the first identity in (\ref{sup0}).
To prove the second identity  in (\ref{sup0}) we use the following

\begin{lemma}\label{snl1} \cite{Koszul1985} The  Schouten-Nijenhuis bracket  $\{, \}_\g$ on $\Lambda (\g)$ can be expressed in terms of $\delta_\g$ as follows
$$ \delta_\g (A_l \wedge B_m) =  \delta_\g (A_l) \wedge B_m  +(-1) ^l A_l \wedge \delta_\g(B_m) + (-1) ^{l+1}\{A_l , B_m\}_\g .$$
\end{lemma}
 Substituting $A_l = \phi^l$ in the above formula, and taking into account  $\{\phi^l, B_m\}_\g = 0$ for all $B_m\in \Lambda (\g)$, since $\phi^l$ is $Ad(\g)$-invariant,   we get
\begin{equation}
\delta_\g (\phi^l \wedge B_m) = (-1)^l \phi^l\wedge \delta _\g (B_m), \label{snl2}
\end{equation}
which is equivalent to the second identity in (\ref{sup0}).
This completes the proof of Proposition \ref{dec1}.
 \end{proof}

 
\begin{corollary}\label{omg} The following relations hold for any compact simple Lie algebra $\g$.
\begin{eqnarray}
\Lambda ^2 _{(*\om_\g)_-} = d_\g (\Lambda ^1 (\g)) \cong \Lambda ^1_{(*\om_\g)_-} = \g \text{ as $\g$-module}. \label{striv}\\
\Lambda ^2 _{(\om _\g) _-}\cong \Lambda ^{n-5} _{(\om_\g)_-} \cong \Lambda ^2 (\g) \text{ as $\g$-module}.\label{2om}\\
\Lambda ^3_{(*\om_\g)_-} \cong \Lambda ^0 _{(*\om_\g)_-} \cong \R \text { as $\g$-module}.\label{3oms}\\
\Lambda ^3_{(\om_\g) _-}  \supset d_\g(\g^\perp).\label{3om}\\
\Lambda ^k_{(\om_\g)_+} \supset (\Lambda^{k-3}(\g)\wedge \om_\g).\label{kom} 
\end{eqnarray}
\end{corollary}


\begin{proof} Clearly (\ref{striv}) follows from (\ref{p1l}).

Let us prove (\ref{2om}). By Remark  \ref{ten} $\Lambda ^2 (\g)$ is a sum of two irreducible $\g$-sub-modules $d_\g (\Lambda ^1 (\g))$ and $\g^{\perp}$. Thus it suffices to show that the action of $L_{\om_\g}$ restricted to each 
 sub-module $d_\g (\Lambda ^1 (\g))$ and $\g^{\perp}$  is not zero.

 First we will show that the image $L_{\om_\g} (d_\g(\Lambda ^1( \g))) \not = 0$. 
 Equivalently it suffices to show that $d_\g (L_{\om_\g} \Lambda ^1 (\g)) \not = 0$. By (\ref{sup0}) we have
 $\delta_\g L_{\om_\g} \Lambda^1(\g) = 0$.  Since $b_4 (\g) = 0$ it follows that $L_{\om_\g} \Lambda ^1 (\g))\not\subset \ker d_\g$. Hence  $L_{\om_\g} (d_\g(\Lambda ^1( \g))) \not = 0$.

Next we will show that $L_{\om_\g} (\g ^\perp) \not = 0$. 
Let  $\triangle$ be the root system of $\g ^\C$.   
We  recall  the following root  decomposition  of the complexification
$\g^\C$, where  $\g$ is a compact real form  of $\g^\C$, see e.g. \cite[Theorem 4.2]{Helgason1978} and
\cite[Theorem 6.3]{Helgason1978}. Let $\h_0\in \g$  
be a Cartan subalgebra of $\g$, so $\h^\C_0$ is a Cartan subalgebra  of $\g ^\C$. Let $\triangle ^+$ be a positive root system of $\g ^\C$  and $\Sigma \subset \triangle ^+$ be a system of simple roots. Denote by $E _{\pm\alpha}$, $\alpha \in \triangle ^+$, the   corresponding  root  vectors such that $[E_\alpha ,E_{-\alpha}] =
{2 H_\alpha \over \alpha ( H_\alpha)}\in \h_0 ^\C$, see e.g. \cite[p.258]{Helgason1978}.
We  decompose $\g$ as
\begin{equation}
\g^\C = \oplus _{\alpha \in \Sigma} \la H_\alpha \ra _\R\oplus _{\alpha\in \triangle^+} \la E_\alpha\ra _\R \oplus _{\alpha\in \triangle^+} \la E _{ - \alpha}\ra _\R.
\label{3.4}
\end{equation}
Then
\begin{equation}
 \g = \oplus _{\alpha \in \Sigma}  \la h_\alpha\ra _\R \oplus_{\alpha\in \triangle^+} \la e_\alpha \ra  _\R \oplus_{\alpha\in \triangle ^+}  \la f_\alpha \ra _\R.
\label{3.5}
 \end{equation}

Now we set $h_\alpha : = iH_\alpha$, $e_\alpha := i( E_\alpha + E _{ - \alpha})$  and $f_\alpha :=  ( E _\alpha - E_{-\alpha} )$.

Note that
$ L_{\om_\g} (\delta _\g (h_\alpha \wedge h_\beta \wedge e_\alpha))$ contains a  nonzero  summand of form 
$e_\alpha\wedge e_\beta \wedge e_{\alpha +\beta}\wedge (- c_{\alpha}   h _\beta  + c^\alpha _\beta h_\alpha )\wedge  f_\alpha$, where $c_\alpha c^\alpha _\beta \not = 0$.  
Since $\delta _\g (\Lambda^3(\g))=\g ^\perp$ is  irreducible,  it follows that $\g ^\perp \subset  \Lambda ^2 _{(\om _\g)_-}$. This completes the proof of (\ref{2om}).

Clearly (\ref{3oms}) is a  direct consequence of Proposition \ref{dec1}.

Now let us prove (\ref{3om}). Note that 
\begin{equation}
\la \om _\g \ra _\R  \subset \Lambda ^3 _{(\om _\g)_+}\label{harm3}.
\end{equation}
  Next we show  that 
\begin{equation}
L_{\om_\g} d_\g (\g^\perp) \not = 0. 
\label{l31}
\end{equation}
By (\ref{sup0}) we have  
$$\delta_\g L_{\om_\g} (\g^\perp) = -L_{\om_\g} \delta_\g (\g^\perp) = 0.$$
It follows that $L_\om (\g^{\perp}) \subset \ker \delta_\g$.  Since $\g^{\perp}$ is an irreducible module,
$L_\om (\g^{\perp})$ does not contain any $Ad(\g)$-invariant  form. Hence  $L_\om (\g^{\perp})\subset \Lambda ^5_\delta (\g)$, in particular $d_\g (L_{\om_\g} (\g^\perp)) \not = 0$.  This implies (\ref{l31}).

The last formula (\ref{kom}) is a consequence of the identity $L_{\om_\g} ^ 2 = 0$.
This completes the proof of Corollary \ref{omg}.

\end{proof}


\begin{example}\label{su3} As a consequence of   Proposition \ref{dec1}  and Corollary \ref{omg} we write here the complexes $\Lambda _{(\om_\g)_\pm}$  and $\Lambda _{(*\om_\g)_\pm}$ for $\g = \su (3)$.
\medskip

\begin{tabular}{l|l|l|l|l |l|l|l|l|l}
 a & $\Lambda ^ 0 _a $ & $\Lambda ^1 _a $ & $\Lambda ^2 _a $ & $\Lambda ^3 _a $ & $\Lambda ^4 _a $& $\Lambda ^5 _a $& $\Lambda ^6 _a $& $\Lambda ^7 _a $& $\Lambda ^8 _a $\\
$*\om_\g-$ & $\R$ & $\g$ & $d_\g(\g)$ &  $\la \om_\g\ra $ & 0 & 0 & 0 & 0 & 0\\
$*\om_\g+$ &  0  & 0 & $\g^\perp$ & $\la \om_\g\ra ^{\perp}$ & $\Lambda ^4(\g)$ & $\Lambda ^3 (\g)$ & $\Lambda ^2  (\g)$ & $\g$ & $ \R$ \\
$\om_\g-$ & $\R $ & $\g$ & $\Lambda^2(\g)$ & $d_\g(\g^\perp) \oplus \delta_\g (\Lambda ^4_{(\om_\g) _-})$ & $*(\Lambda^1(\g) \wedge \om_\g)$ & $\la *\om_\g\ra$ & 0 & 0 & 0\\
$\om_\g+$ & 0 & 0 & 0 &  $\la \om_\g \ra_\R\oplus \R^{27}$ & $\R^8 + 2 \R^{27}$ & $\la *\om_\g\ra ^{\perp}$ & $\Lambda^2(\g)$ & $\g$ & $\R$
 \end{tabular}

\medskip

In this example except the modules $\Lambda ^3_{(\om_\g)_\pm}$ all other  modules $\Lambda^i_a$ can be
defined  easily using Proposition \ref{dec1}  and Corollary \ref{omg}. Since $\Lambda ^4(\g)$ is invariant under
the Hodge star operator $*$ we get the following decomposition
\begin{equation}
\Lambda ^4(\g) = (\Lambda ^1(\g) \wedge \om_\g) \oplus \delta_\g (\Lambda ^5_{27}) + *(\Lambda ^1(\g) \wedge \om_\g) \oplus d_\g (\Lambda ^3_{27}),\label{d4}
\end{equation}
where $\Lambda^3_{27}$ is an irreducible $\g$-submodule of $\Lambda ^3(\g) = \la \om_\g\ra _\R \oplus d_\g (\g^\perp) \oplus \Lambda ^3_\delta(\g)$, (so $\Lambda^3_{27} = \Lambda^3_\delta(\g)$) and $\Lambda ^5_{27} = * \Lambda ^3_{27}$. Using (\ref{snl2}) we get $\Lambda^1(\g)\wedge \om_\g \subset \ker \delta_\g$, and  hence $*(\Lambda ^1(\g)\wedge \om_\g) \subset \ker d_\g$.  

Now let us show that
\begin{equation}
\delta_\g (\Lambda^4_{(\om_\g)_-})\subset \Lambda ^3_{(\om_\g) _-}.\label{ex13}
\end{equation}
Using (\ref{snl2}) it suffices to show that $\delta_\g(L _{\om_\g} (\Lambda^4_{(\om_\g)_-}))\not = 0$.
But that is obvious, since $\ker (\delta_\g)_{|\Lambda ^7 (\g)} = 0$. This yields (\ref{ex13}).

In the same way we  get $L_\om (\delta _\g (\Lambda ^4_{(\om_\g)_+} \cap \ker d_\g)) = 0$, which yields 
\begin{equation}
\delta_\g (\Lambda ^4_{(\om_\g)_+} \cap \ker d_\g) = \Lambda ^3 _{27} \subset \Lambda ^3_{(\om_\g)_+}.\label{ex14}
\end{equation}
The modules $\Lambda ^3_{(\om_\g)_\pm}$ can be completely defined from Proposition \ref{dec1}, Corollary \ref{omg}
and (\ref{ex13}), (\ref{ex14}).
\end{example}
\medskip

Now we assume that $\phi^l_0$ is an $Aut ^+ (\g)$-invariant $l$-form. Using   (\ref{phip}) and (\ref{phim}) we define the corresponding  decomposition of 
$$\Om^k(M) = \Om ^k_{\phi^l _+}(M)\oplus \Om ^k_{\phi^l _-}(M).$$
This leads to  the following exact sequence  of modules
\begin{equation}
0 \to \Om ^k_{\phi^l_+}(M)\stackrel{i}{\to} \Om ^k (M)\stackrel{p} {\to}\Om ^k _{\phi^l_-}(M) \to 0 .
\label{spec1}
\end{equation}
Denote by $d_-$ the composition of the differential operator $d$ with the projection $\Pi_-$ to   $\Om_{\phi^l_-}(M)$. 

\begin{theorem}\label{closed}  Assume that  $\phi ^l$ is {\it quasi-closed}. Then \\
 1.   $(\Om ^*_{\phi^l _+}(M), d)$
is a  differential sub-complex of $(\Om^*(M), d)$.\\
2.  $(\Om^*_{\phi^l _+}(M), d_-)$ is a differential complex. 
\end{theorem}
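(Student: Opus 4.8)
The plan is to read off both assertions from the single structural fact that $\Om^k_{\phi^l_+}(M)$ is, by its definition (\ref{phip}), the pointwise kernel of the wedging operator $L_{\phi^l}(\beta)=\phi^l\wedge\beta$, and that quasi-closedness of $\phi^l$ is exactly what makes $d$ respect this kernel. I would first record the (non-obstructive) point that $\Om^k_{\phi^l_+}(M)$ is a smooth subbundle of constant rank: at each $x\in M^n$ the form $\phi^l(x)$ is $GL$-equivalent to the fixed model $\phi^l_0$ on $\g$, so $\dim\Lambda^k_{\phi^l_+}$ is constant and the fibrewise orthogonal projections $\Pi_\pm$ associated with (\ref{spec1}) are smooth bundle maps; hence $\Om^k(M)=\Om^k_{\phi^l_+}(M)\oplus\Om^k_{\phi^l_-}(M)$ is genuinely a decomposition of sheaves.

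For the first assertion I would take a section $\beta\in\Om^k_{\phi^l_+}(M)$, so $\phi^l\wedge\beta=0$ identically, and differentiate this identity. The Leibniz rule gives
$$0=d(\phi^l\wedge\beta)=d\phi^l\wedge\beta+(-1)^l\phi^l\wedge d\beta.$$
Substituting the quasi-closedness hypothesis $d\phi^l=\theta\wedge\phi^l$ with $\theta$ a $1$-form, the first term becomes $d\phi^l\wedge\beta=\theta\wedge(\phi^l\wedge\beta)=0$ (and the same conclusion holds for the convention $d\phi^l=\phi^l\wedge\theta$, since $\theta$ and $\phi^l$ commute up to sign). Therefore $\phi^l\wedge d\beta=0$, i.e. $d\beta\in\Om^{k+1}_{\phi^l_+}(M)$, so $d$ preserves $\Om^*_{\phi^l_+}(M)$; together with $d^2=0$ this makes it a differential subcomplex. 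This is a one-line computation and presents no real difficulty — the only thing to keep straight is that it is precisely the \emph{wedge} form of the hypothesis that is used, not, say, a divergence condition on $\phi^l$.

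For the second assertion I would argue structurally. Having shown in part~1 that $(\Om^*_{\phi^l_+}(M),d)$ is a subcomplex, the short exact sequence of sheaves (\ref{spec1}) presents $\Om^*_{\phi^l_-}(M)$, via the projection $p=\Pi_-$, as the quotient complex $\Om^*(M)/\Om^*_{\phi^l_+}(M)$; the induced differential on the quotient is well defined exactly because $d$ preserves the subcomplex, and it squares to zero automatically, transporting through the isomorphism $\Pi_-$ to $d_-=\Pi_-\circ d$. If a direct check is preferred, I would take $\gamma\in\Om^k_{\phi^l_-}(M)$, write $d\gamma=\Pi_+(d\gamma)+d_-\gamma$, and compute
$$d_-(d_-\gamma)=\Pi_-\big(d(d\gamma-\Pi_+(d\gamma))\big)=-\Pi_-\big(d\,\Pi_+(d\gamma)\big);$$
since $\Pi_+(d\gamma)\in\Om^{k+1}_{\phi^l_+}(M)$ and $d$ preserves this subspace by part~1, the form $d\,\Pi_+(d\gamma)$ lies in $\Om^{k+2}_{\phi^l_+}(M)=\ker\Pi_-$, so the right-hand side vanishes and $d_-\circ d_-=0$. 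Either route works, and the essential input is part~1; there is no genuine obstacle here beyond the bookkeeping of the projections.
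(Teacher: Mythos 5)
Your proposal is correct and follows essentially the same route as the paper: part~1 is the same Leibniz-rule computation showing $\phi^l\wedge d\beta=0$ from $d\phi^l=\phi^l\wedge\theta$, and your direct check of $d_-^2=0$ (writing $d\gamma=d_-\gamma+\Pi_+(d\gamma)$ and using that $d$ preserves $\Om^*_{\phi^l_+}(M)$) is exactly the paper's argument, with your quotient-complex formulation being only a repackaging of it. The preliminary remark on constancy of rank and smoothness of the projections $\Pi_\pm$ is a sound point the paper leaves implicit.
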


\begin{proof} 1. First   we assume that $\phi^l$ is quasi-closed. Let us show that $(\Om ^*_{\phi^l _+}(M), d)$
is a  differential complex. 
Assume that $\beta \in \Om ^k _{\phi^l_+} (M)$. Then
\begin{equation}
d\beta \wedge \phi^l =  d( \beta \wedge \phi^l) + (-1) ^{deg\, \beta} \beta \wedge d\phi^l = (-1)^{deg\, \beta} \beta \wedge \theta \wedge  \phi^l = 0,\label{cl1}
\end{equation}
which implies that 
\begin{equation}
d\beta \in \Om ^k_{\phi^l_+}(M). \label{cl2}
\end{equation}
 This proves the first assertion of  Theorem \ref{closed}.

2. Next let us show that $d_- ^2 = 0$. Assume that $\alpha \in  \Om ^k _{\phi^l_-}(M)$. 
 Since $ d\alpha = d_- \alpha + \beta$,  where $\beta \in \Om^{k+1}_{\phi^l_+}(M)$,
we get  $d d_-\alpha + d \beta = 0$.  By (\ref{cl2}) $d\beta \in  \Om^{k+2} _{\phi^l_+}(M)$, hence $d d_-\alpha \in \Om^{k+2} _{\phi^l_+}(M)$,  which implies
that $d_ - ^2 \alpha = 0$.  
This completes the proof of Theorem  \ref{closed}.
\end{proof}

Denote  by $H^i(M)$ the de Rham cohomology group $H^i(M, \R)$, and by $H^* _{\phi^l_\pm}(M^n)$  the  cohomology  groups of  modules $(\Om _{\phi^l_\pm}(M), d_{\pm})$, where
$d_+$ is the restriction of $d$ to $\Om_{\phi^l_\pm}(M)$. 

\begin{proposition}\label{spec2}  There exists a long exact sequence of  cohomology groups 
$$0 \to H^1 (M^n) \stackrel{p^*}{\to}  H^1 _{\phi^l_-} (M^n)  \stackrel{t^*}{\to} H ^2 _{\phi^l_+} (M^n) \stackrel{i}{\to} H^2 (M^n) \stackrel{p^*}{\to }\cdots H^n (M^n) \to 0.$$
\end{proposition}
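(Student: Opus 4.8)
The plan is to upgrade the short exact sequence (\ref{spec1}) from a sequence of sheaves to a short exact sequence of \emph{cochain complexes} and then feed it into the standard zig-zag (snake) lemma. First I would note that the fibrewise orthogonal splitting $\Lambda^k(\g) = \Lambda^k_{\phi^l_+} \oplus \Lambda^k_{\phi^l_-}$ integrates to a degreewise direct sum $\Om^k(M) = \Om^k_{\phi^l_+}(M) \oplus \Om^k_{\phi^l_-}(M)$, so that in each degree (\ref{spec1}) is split exact and no sheaf-theoretic subtlety about surjectivity on global sections arises (the sheaves involved are in any case fine). It then remains to check that $i$ and $p$ intertwine the differentials. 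For the inclusion $i$ this is precisely Theorem \ref{closed}.1, which says $(\Om^*_{\phi^l_+}(M), d)$ is a subcomplex, so $d_+ = d|_{\Om^*_{\phi^l_+}}$ and $i$ is a chain map. For the projection $p = \Pi_-$ I would verify $p \circ d = d_- \circ p$ directly: writing $\om = \om_+ + \om_-$ with $\om_\pm \in \Om_{\phi^l_\pm}$, the quasi-closedness of $\phi^l$ gives $d\om_+ \in \Om_{\phi^l_+}(M)$ by (\ref{cl2}), whence $\Pi_-(d\om) = \Pi_-(d\om_-) = d_-\om_- = d_-(p\,\om)$.

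With the short exact sequence of complexes
\begin{equation*}
0 \to (\Om^*_{\phi^l_+}(M), d) \stackrel{i}{\to} (\Om^*(M), d) \stackrel{p}{\to} (\Om^*_{\phi^l_-}(M), d_-) \to 0
\end{equation*}
in hand, the zig-zag lemma produces the long exact sequence
\begin{equation*}
\cdots \to H^k_{\phi^l_+}(M) \stackrel{i}{\to} H^k(M) \stackrel{p^*}{\to} H^k_{\phi^l_-}(M) \stackrel{t^*}{\to} H^{k+1}_{\phi^l_+}(M) \to \cdots ,
\end{equation*}
whose connecting homomorphism is $t^*[\gamma] = [\Pi_+ d\gamma]$. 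This is well defined: a $d_-$-closed $\gamma$ satisfies $\Pi_-(d\gamma)=0$, so $d\gamma = \Pi_+ d\gamma \in \Om^{k+1}_{\phi^l_+}(M)$, and the latter is automatically $d$-closed since $d^2=0$.

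Finally I would cut this infinite sequence down to the stated form by computing its edge terms. Since $\phi^l \neq 0$ one has $\Lambda^0_{\phi^l_+} = 0$, and (\ref{p1l}) gives $\Lambda^1_{\phi^l_-} \cong \Lambda^1(\g)$, forcing $\Lambda^1_{\phi^l_+} = 0$ by the orthogonal splitting; hence $\Om^0_{\phi^l_+}(M) = \Om^1_{\phi^l_+}(M) = 0$ and therefore $H^0_{\phi^l_+}(M) = H^1_{\phi^l_+}(M) = 0$. Feeding these vanishings into the long exact sequence makes $p^*: H^0(M) \to H^0_{\phi^l_-}(M)$ an isomorphism and forces $p^*: H^1(M) \to H^1_{\phi^l_-}(M)$ to be injective (its kernel equals the image of $i\colon H^1_{\phi^l_+}(M)=0 \to H^1(M)$), so the sequence opens as $0 \to H^1(M) \stackrel{p^*}{\to} H^1_{\phi^l_-}(M) \stackrel{t^*}{\to} H^2_{\phi^l_+}(M) \to \cdots$, exactly as stated. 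At the top, (\ref{pkl}) (with $l \ge 3$) gives $\Lambda^n_{\phi^l_-} = 0$, so $H^n_{\phi^l_-}(M) = 0$ and the sequence closes with $\cdots \to H^n(M) \to 0$.

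Everything downstream of the short exact sequence is formal homological algebra, so the one point demanding genuine care—rather than a serious obstacle—is the chain-map property of $p$. This is exactly where quasi-closedness of $\phi^l$ and Theorem \ref{closed} are indispensable: without the stability $d\,\Om_{\phi^l_+}(M) \subset \Om_{\phi^l_+}(M)$ of (\ref{cl2}), the projection $p$ would fail to commute with the differentials, and there would be no short exact sequence of complexes to which the zig-zag lemma could be applied.
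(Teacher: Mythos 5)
Your proposal is correct and follows essentially the same route as the paper: the paper likewise reduces Proposition \ref{spec2} to the commutativity of the three-row diagram of complexes built from the split short exact sequence (\ref{spec1}), verifying $d\circ i = i\circ d_+$ via (\ref{cl2}) and $d_-\circ p = p\circ d$ from the definition of $d_-$, and then invokes the standard long exact sequence. Your explicit check of the edge terms --- $\Om^0_{\phi^l_+}(M)=\Om^1_{\phi^l_+}(M)=0$ from (\ref{p1l}) and $H^n_{\phi^l_-}(M^n)=0$ from (\ref{pkl}) --- is a detail the paper leaves implicit, and it correctly justifies why the sequence opens with an injection at $H^1(M^n)$ and terminates at $H^n(M^n)$.
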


\begin{proof}  Let us define the following diagram of chain complexes
$$
\xymatrix{\Om ^k_{\phi^l_+}(M) \ar[r] ^ {d_+} \ar[d] ^i &  \Om^{k+1}_{\phi^l_+}(M) \ar[r] ^ {d_+} \ar[d] ^i & \Om^{k+2}_{\phi^l_+}(M) \ar[r] ^ {d_+} \ar[d] ^i &\\ 
\Om ^k(M) \ar[r] ^ {d} \ar[d] ^p &  \Om^{k+1}(M) \ar[r] ^ {d} \ar[d] ^p & \Om^{k+2}(M) \ar[r] ^ {d} \ar[d] ^p &\\ 
\Om ^k_{\phi^l _-}(M) \ar[r] ^ {d_-}  &  \Om^{k+1}_{\phi^l_-}(M) \ar[r] ^ {d_-}  & \Om^{k+2}_{\phi^l_-}(M) \ar[r] ^ {d_-} &}
$$
 To prove Proposition \ref{spec2}, taking into account (\ref{spec1}), it suffices to show that  the  above diagram  is commutative.  Equivalently we need to show 
\begin{equation}
d \circ i = i \circ  d_+, \,  d_- \circ p  = p \circ  d. \label {com1}
\end{equation}
The first identity in (\ref{com1}) is a consequence of (\ref{cl2}) (the  $d$-closedness of $\Om _{\phi ^l _+}(M)$).
The second identity follows from the definition of $d_-$. This completes the proof of Proposition \ref{spec2}.
\end{proof}

\begin{remark} The map $p^* : H^k(M) \to H^k _{\phi^l_-} (M)$ associates  each  cohomology class of a closed $k$-form  $\phi^k$ on
$M$ to  the cohomology class  $[p (\phi ^k)] \in H^k_{\phi^l_-} (M)$, see (\ref{com1}). The connecting homomorphism
$t^* : H ^k_{\phi^l_-}(M)\to H^{k+1} _{\phi ^l_+} (M)$ is defined as follows:
$[\phi] \mapsto  [d\phi]$. The  map $i ^* : H^k_{\phi^l_+}(M) \to   H^k(M)$ associates each cohomology class in $H^k_{\phi^l _+}(M)$ of
a closed $k$-form $\phi^l \in \Om ^k _{\phi^l} (M)$ to the cohomology class $[\phi^k]\in H^k(M)$.
\end{remark}

\begin{example}\label{csu3} Let us consider the  new homology groups arisen  in this way on a connected orientable manifold $M^8$  provided with a 3-form $\om^3$ of  type $\om_{\su(3)}$ assuming that $d\om^3 = 0 = d^* \om^3$.
\medskip

\begin{tabular}{l|l|l|l|l}
a & $*\om_\g-$ & $*\om_\g+$  & $\om_\g -$  & $\om_\g +$\\
$H ^ 0 _a $  & $\R$ &  0 &  $\R$ & 0 \\
$H ^1 _a $ & $H^1(M) \oplus \frac{\{d^{-1}(\Om^2_{(\om_\g)_+})\}}{\{\ker d\cap \Om^1(M)\}}$&  0& $H^1(M)$ &0\\
$H ^2 _a $ &  $\frac{\Om^2_{(*\om_\g)_-}}{\Pi_{d_\g} (d\Om^1(M))}$&  $\{\ker d \cap \Om^2_{(*\om_\g)_-}\}$&  $H^2(M) \oplus \frac{\{\phi|d\phi\wedge \om_\g = 0\}}{\{\phi|d\phi =0\}}$ & $0$\\
$H^3 _a $  & $\R $ & $\frac{H^3(M)}{\R} \oplus d\Om^2_{(*\om_\g) _-}$ &   $H^3 _{(\om_\g)_- }$  & $\{\ker d\cap\Om^3_{(\om_\g)_+}\}$\\
$H ^4 _a$ & 0 & $H^4(M)$ & $\frac{\Om^4_{(\om_\g) _-}}{d_-(\Om^3_{(\om_\g)_-})}$ & $H^4_{(\om_\g) _+}(M)$\\
$H ^5 _a $&  0&  $H^5 (M)$ & $\R$ &  $\frac{H^5 (M)}{\R} \oplus d\Om^4_{(\om_\g)_-}$\\
$H ^6 _a $&  0& $H^6(M)$ &  0 & $H^6(M)$ \\
$H ^7 _a $&  0& $H^7(M)$ &  0 & $H^7(M)$ \\
$H ^8 _a $  & 0 & $\R$ & 0  & $\R$\\
\end{tabular}
\medskip

In this table we denote  by $\Pi_{d_\g}$ the orthogonal projection of $\Om^*(M)$ on the image of  the operator $d_\g$ acting on $\Om^*(M)$.
The cohomology groups $H^3_{(\om_\g)_-}$, $H^4_{(\om_\g)_+}$  are best described by using the relations in Example
\ref{su3}.  
\end{example}

\begin{remark}\label{fin1} Note that many  cohomology groups  $H^i_{\phi^l_\om}$ are infinite-dimensional.
 Proposition \ref{spec2}  implies that  the subgroup $p^*(H^i(M)) \subset H^i_{\phi_-}(M^n)$ as well as the coset
 $H^{i+1}_{\phi^l_+}/t^*(H^i_{\phi^l_-}(M))$ are finite-dimensional.
 \end{remark}

Now we are going to define a subgroup of $H^i_{\phi^l_\pm}(M^n)$.




\begin{definition}\label{har} A differential form $\alpha$ in $\Om^* _{\phi_\pm}(M)$  is called {\it $\phi_\pm$-harmonic}, 
if $d_\pm \alpha = 0 = d^* \alpha$. 
\end{definition}

Denote by  $\Hh^k (M^n)$  the space of all  harmonic $k$-forms on $M^n$. Let $\Hh_{\phi^l_\pm} ^ i (M)$ be the space of all  $\phi^l_\pm$-harmonic  forms  in $\Om ^k_{\phi^l_\pm}(M^n)$. 

\begin{proposition}\label{har2}  There is a natural monomorphism  $h_i^\pm: \Hh _{\phi^l_\pm} ^ i (M) \to H_{\phi^l_\pm}^i(M)$
associating  each  $\phi^l_\pm$-harmonic form $\alpha$ to  its cohomology  class $[\alpha]\in H_{\phi^l_\pm}^i(M)$. 
\end{proposition}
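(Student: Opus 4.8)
The plan is to verify two things: that $h_i^\pm$ is well defined, and that it is injective. For well-definedness, recall from Definition \ref{har} that a $\phi^l_\pm$-harmonic form $\alpha \in \Om^i_{\phi^l_\pm}(M)$ satisfies $d_\pm\alpha = 0 = d^*\alpha$. In particular $\alpha$ is a $d_\pm$-cocycle, so its class $[\alpha]\in H^i_{\phi^l_\pm}(M)$ is defined, and the assignment $\alpha\mapsto[\alpha]$ is $\R$-linear. Thus $h_i^\pm$ is a well-defined homomorphism and the content of the proposition is that $\ker h_i^\pm = 0$.

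For injectivity I would run the standard Hodge-theoretic pairing argument, working with the global $L^2$ inner product $\la\, ,\ra$ on $\Om^*(M)$ (so, as in the rest of this section, $M$ is taken compact, which makes $d^*$ the $L^2$-adjoint of $d$ and validates integration by parts). Suppose $\alpha$ is $\phi^l_\pm$-harmonic with $h_i^\pm(\alpha)=[\alpha]=0$; then $\alpha = d_\pm\beta$ for some $\beta\in\Om^{i-1}_{\phi^l_\pm}(M)$. In the $+$ case we have $d_+=d$, and since $d^*\alpha=0$ by harmonicity,
\begin{equation}
\la \alpha, \alpha\ra = \la \alpha, d\beta\ra = \la d^*\alpha, \beta\ra = 0,
\end{equation}
whence $\alpha=0$. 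This disposes of the $+$ case.

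The $-$ case is the only real subtlety, because there $d_-=\Pi_-\circ d$ and the projection must be dealt with. The key observation is that, by (\ref{phim}), the splitting $\Om^k(M)=\Om^k_{\phi^l_+}(M)\oplus\Om^k_{\phi^l_-}(M)$ is orthogonal, so $\Pi_-$ is the $L^2$-orthogonal projection and is in particular self-adjoint; moreover $\Pi_-\alpha=\alpha$ because $\alpha\in\Om^i_{\phi^l_-}(M)$. Writing $\alpha=d_-\beta=\Pi_-(d\beta)$ and again using $d^*\alpha=0$, I would compute
\begin{equation}
\la \alpha, \alpha\ra = \la \alpha, \Pi_-(d\beta)\ra = \la \Pi_-\alpha, d\beta\ra = \la \alpha, d\beta\ra = \la d^*\alpha, \beta\ra = 0,
\end{equation}
so $\alpha=0$ here as well, proving $h_i^\pm$ is a monomorphism. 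The step to get right is precisely this reconciliation in the $-$ case: the harmonicity condition $d^*\alpha=0$ refers to the \emph{full} exterior derivative $d$, whereas the cohomology is computed with the truncated differential $d_-$, and the two are matched exactly because $\Pi_-$ is orthogonal and fixes the $(-)$-form $\alpha$. I expect no further obstacle; the argument is formal once the self-adjointness of $\Pi_-$ and the compactness of $M$ are in place.
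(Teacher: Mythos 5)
Your proposal is correct and is essentially the paper's own proof: the paper runs exactly this pairing computation, $\la \alpha, d_\pm\gamma\ra = \la\alpha, d\gamma\ra = \la d^*\alpha, \gamma\ra = 0$, where the first equality is precisely your observation that the splitting (\ref{phip})--(\ref{phim}) is orthogonal so the $\Om_{\phi^l_+}$-component of $d\gamma$ pairs to zero against $\alpha$. Your write-up merely makes explicit what the paper leaves implicit (self-adjointness of $\Pi_-$, and compactness of $M$ so that $d^*$ is the $L^2$-adjoint of $d$), which is a harmless and indeed welcome amplification.
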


\begin{proof} Assume that  $\alpha \in \ker h_i^\pm$, i.e. $\alpha$  is a  $\phi_\pm$-harmonic form, and $\alpha = d_\pm \gamma$,
for some $\gamma \in \Om ^{i-1}_{\phi^l_\pm}(M^n)$. By the definition of $d_\pm$  we have
$$ \la \alpha,  d_\pm\gamma\ra = \la\alpha, d\gamma\ra =  \la d^* \alpha, \gamma\ra = 0 $$
which implies that $\alpha = 0$. This proves  Proposition \ref{har2}.
\end{proof}

By Proposition \ref{dec1},  $*L_{\phi^l}$ induces a  bundle  isomorphism $\Om ^1_{\phi^l_-}(M^n)  \to \Om ^{n-l-1}_{\phi^l _-}(M^n)$.
  
\begin{proposition}\label{l1} Assume that $d\phi^l =0$. The operator $*L_{\phi^l}$ induces an isomorphism also denoted by $*L_{\phi^l}:\Hh ^1_{\phi_-}(M^n)  \to  \Hh ^{n-l-1} _{\phi_-} (M^n)$.
\end{proposition}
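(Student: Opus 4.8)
The plan is to show that the fibrewise isomorphism $*L_{\phi^l}\colon \Om^1_{\phi^l_-}(M^n)\to\Om^{n-l-1}_{\phi^l_-}(M^n)$ supplied by Proposition \ref{dec1} carries $\phi_-$-harmonic forms to $\phi_-$-harmonic forms, and that under it the two conditions defining $\phi_-$-harmonicity get \emph{interchanged}. Writing $\beta:=*L_{\phi^l}\alpha$ for $\alpha\in\Om^1_{\phi^l_-}(M^n)$, I would prove the two equivalences
\begin{equation}
d^*\beta=0\ \Longleftrightarrow\ d_-\alpha=0,\qquad d_-\beta=0\ \Longleftrightarrow\ d^*\alpha=0.
\end{equation}
Granting these, $\beta$ is $\phi_-$-harmonic (i.e. $d_-\beta=0=d^*\beta$) precisely when $\alpha$ is $\phi_-$-harmonic; since $*L_{\phi^l}$ is already a linear fibrewise isomorphism, its restriction is then a bijection $\Hh^1_{\phi_-}(M^n)\to\Hh^{n-l-1}_{\phi_-}(M^n)$, so the ``iff'' form of the two equivalences delivers injectivity and surjectivity simultaneously.

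For the first equivalence I would use $**=\pm\mathrm{id}$, the Hodge identity $d^*=\pm *d*$, and the hypothesis $d\phi^l=0$ to compute $d^*\beta=\pm *d(\phi^l\wedge\alpha)=\pm *(\phi^l\wedge d\alpha)=\pm *L_{\phi^l}(d\alpha)$. As $*$ is injective, $d^*\beta=0$ iff $\phi^l\wedge d\alpha=0$; and by the definition of $\Om^2_{\phi^l_+}$ together with the orthogonal projection $\Pi_-$, this is exactly $d_-\alpha=\Pi_-(d\alpha)=0$. This step is a direct Hodge-duality computation once $d\phi^l=0$ is invoked.

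The second equivalence is the main obstacle. First, since $\Lambda^{n-l}_{\phi^l_-}=\R\langle *\phi^l\rangle$ is one-dimensional by Proposition \ref{dec1}, the component $d_-\beta=\Pi_-(d\beta)$ of the $(n-l)$-form $d\beta$ vanishes iff the \emph{top-degree} form $\phi^l\wedge d\beta$ vanishes. Using $d\phi^l=0$ again, $\phi^l\wedge d\beta=\pm d(\phi^l\wedge\beta)=\pm d\bigl(\phi^l\wedge *(\phi^l\wedge\alpha)\bigr)$. The key algebraic point is that the fibrewise map $\alpha\mapsto *^{-1}\bigl(\phi^l\wedge *(\phi^l\wedge\alpha)\bigr)$ is an $Ad(\g)$-equivariant endomorphism of $\Lambda^1(\g)\cong\g$; since $\g$ is an irreducible $\g$-module, Schur's lemma forces it to be a scalar $c$, so $\phi^l\wedge *(\phi^l\wedge\alpha)=c\,{*}\alpha$. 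Pairing with $\alpha$ gives $c\,|\alpha|^2=\pm|\phi^l\wedge\alpha|^2$, which is nonzero because $L_{\phi^l}$ is injective on $1$-forms (as $\Lambda^1_{\phi^l_+}=0$); hence $c\neq0$. Consequently $\phi^l\wedge d\beta=\pm c\,d({*}\alpha)=\pm c\,(d^*\alpha)\,\mathrm{vol}$, and therefore $d_-\beta=0$ iff $d^*\alpha=0$.

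I expect the difficulty to be concentrated in this last step, for two reasons: recognising that $d_-\beta$ is detected by the scalar $\phi^l\wedge d\beta$ (which rests on the one-dimensionality of $\Om^{n-l}_{\phi^l_-}$), and reducing $\phi^l\wedge *(\phi^l\wedge\alpha)$ to a nonzero multiple of ${*}\alpha$ via equivariance and Schur's lemma. Once both equivalences are established in both directions, the fibrewise isomorphism $*L_{\phi^l}$ restricts to the asserted isomorphism $\Hh^1_{\phi_-}(M^n)\to\Hh^{n-l-1}_{\phi_-}(M^n)$, completing the proof.
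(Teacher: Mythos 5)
Your proof is correct, and it assembles the same basic ingredients as the paper --- the fibrewise isomorphism $*L_{\phi^l}$ from Proposition \ref{dec1}, the hypothesis $d\phi^l=0$ to commute $d$ past wedging with $\phi^l$, and a Schur-lemma scalar identity: your relation $\phi^l\wedge *(\phi^l\wedge\alpha)=c\,{*}\alpha$ is exactly the paper's Lemma \ref{irr6} applied to $\beta=*L_{\phi^l}\alpha$, and your pairing argument $c\,|\alpha|^2=\pm|\phi^l\wedge\alpha|^2\neq 0$ is a cleaner way to see $c\neq 0$ than the paper's image-of-$\lambda$ argument --- but the architecture is genuinely different. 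The paper argues in two separate halves: it first shows $*L_{\phi^l}$ maps $\Hh^1_{\phi_-}$ into $\Hh^{n-l-1}_{\phi_-}$, establishing $d_-(*L_{\phi^l}\alpha)=0$ by pairing $d(*L_{\phi^l}\alpha)$ against $*\phi^l$ through the global Hodge decomposition (\ref{hod2}) (so compactness is implicitly invoked, and as written this orthogonality is an integrated identity while $d_-=0$ is a pointwise condition), and it then proves surjectivity separately by exhibiting the explicit candidate preimage $\alpha:=*(\beta\wedge\phi^l)$ and verifying its harmonicity via Lemma \ref{irr6}. You instead prove the two pointwise equivalences $d^*\beta=0\iff d_-\alpha=0$ and $d_-\beta=0\iff d^*\alpha=0$, i.e., you observe that $*L_{\phi^l}$ \emph{interchanges} the two conditions defining $\phi_-$-harmonicity; injectivity and surjectivity then follow simultaneously from the bundle isomorphism, with no separate inverse construction needed. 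Your detection of $d_-\beta$ by the top-degree form $\phi^l\wedge d\beta$, resting on $\dim\Lambda^{n-l}_{\phi^l_-}=1$ (the same fact the paper uses in Lemma \ref{har1}), replaces the Hodge-theoretic step by a purely local computation; this both tightens that step of the paper's proof and frees the statement from any compactness assumption, while the paper's computation (\ref{l2}) is recovered as your first equivalence read in one direction.
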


\begin{proof} First  we show that if $\alpha \in  \Hh ^1_{\phi_-}(M^n)$, then $*L_{\phi^l}(\alpha)\in\Hh ^{n-l-1} _{\phi_-} (M^n)$. We compute
\begin{equation} 
d^* (*L_{\phi^l}(\alpha)) = d (\alpha \wedge \phi^l) =  (d_-\alpha + d_+ \alpha) \wedge \phi^l  = 0, \label{l2}
\end{equation}
since $d_-\alpha = 0$, and $(d_+\alpha) \wedge  \phi^l = 0$. 
 
Next we prove that $d_- (*L_{\phi^l}(\alpha)) = 0$, or equivalently $d(*L_{\phi^l}(\alpha)) \in  \Om ^{n-l}_{\phi^l_+}(M^n)$.  Note  that  we have  the following  orthogonal  Hodge decomposition

\begin{equation}
\Om ^{n-l}(M^n) = \Hh ^{n-l} (M^n) \oplus d (\Om ^{n-l-1} (M^n))   \oplus d^* (\Om ^{n-l+1} (M^n)).\label{hod2}
\end{equation}

Since $d\phi ^l = 0$,   we have $\phi^{n-l} \in \Hh^{n-l}(M) \oplus d^* (\Om ^{n-l+1} (M^n))$.  Hence we get  from  \ref{hod2}
\begin{equation}
\la  \phi ^{n-l}, d(*L_{\phi^l}(\alpha))  \ra = d(*L_{\phi^l}(\alpha))\wedge \phi ^{l} = 0.
\label{hod3}
\end{equation}

It follows that $d(*L_{\phi^l}(\alpha)) \in  \Om^{n-l}_{\phi^l_+}(M^n)$.   Thus $*L_{\phi^l}(\alpha)\in\Hh ^{n-l-1} _{\phi_-} (M^n)$.

By Proposition \ref{dec1} the restriction of $*L_{\phi^l}$ to $\Hh^1_{\phi^l_-}(M^n)$  is 
injective.

Let us show that  this map is also surjective.  We will  first prove  the following

\begin{lemma}\label{irr6}  There exists a nonzero constant $c$  depending on $\phi^l_0$ such that  for all
$\beta \in \Om ^{n-l-1} _{\phi^l_-}(M^n)$  we have
\begin{equation}
*(\beta \wedge \phi^l_0)\wedge \phi^l_0  = c\cdot ( * \beta). 
\end{equation}
\end{lemma}
\begin{proof}  Lemma \ref{irr6} follows from  the fact that the map
$$\Xi: \Lambda^{n-l-1}_{(\phi^l_0)_-} \to \Lambda^{n-l-1}(\g), \: \beta \mapsto * ( * (\beta  \wedge \phi ^ l _0 ) \wedge \phi ^l_0),$$
is an $Ad(\g)$-equivariant map: $Ad(g) \Xi = \Xi Ad(g) $ for all $g \in Ad(\g)$.
By Proposition \ref{dec1} the image $*L_{\phi^l_0}(\Lambda ^{n-l-1}_{(\phi^l_0)_-}) = \Lambda ^1 (\g)$, and apply
 Proposition \ref{dec1} again,  we conclude that the image of $\Xi$ is $\Lambda^{n-l-1}_{(\phi^l_0)_-}$.
Thus $\Xi$ is an $Ad(\g)$-equivariant  endomorphism of  $\Lambda^{n-l-1}_{(\phi^l_0)_-}$.
The same argument as in the proof of Lemma \ref{exi2} using Schur's Lemma implies that $\Xi$ is a  multiple of the identity map. This proves Lemma \ref{irr6}. 
\end{proof}

 Let us continue the proof of Proposition \ref{l1}.  Suppose  that $\beta \in \Hh_{\phi^l_-} ^{n-l-1}$.  Set
\begin{equation}
\alpha := *(\beta \wedge \phi^l) \in \Om ^1 (M^n).\label{inv1}
\end{equation}
Using $d* \beta = 0$, $d\phi ^l = 0$ and taking into account Lemma \ref{irr6} we get
$$ d\alpha \wedge \phi^l = d(\alpha \wedge \phi^l)=  d ( * (\beta \wedge \phi) \wedge \phi) = c\cdot d* \beta = 0,$$
which implies that $d\alpha \in \Om ^2_{\phi^l_+}(M^n)$. Hence
\begin{equation}
 d_-\alpha = 0.\label{l11}
 \end{equation}
Next we note that
\begin{equation}
d*\alpha = d(\beta \wedge \phi^l) = d\beta \wedge \phi = 0\label{l12}
\end{equation}
since $d_-\beta = 0$.

By (\ref{l11}) and (\ref{l12})  $\alpha$ is $\phi^l_-$-harmonic.  This completes the proof
of Proposition \ref{l1}.
\end{proof}

\begin{lemma}\label{har1} Let $M^n$ be a connected compact $Aut(\g)$-manifold  provided  with a  $l$-form  $\phi ^l$ of type $\phi^l_0$. Then\\
1. There  is a monomorphism $ i: H^1 (M^n) \to \Hh_{\phi^l_-} ^1 (M^n)$. This monomorphism is an isomorphism if
$\phi^l_0 = \om^3_\g$.\\
2. If $\delta \phi^l=0$, then $\Hh_{\phi^l_-} ^{n-l} (M^n) = 1$.
\end{lemma}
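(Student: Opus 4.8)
The plan is to handle the two degrees $k=1$ and $k=n-l$ separately; in each case I would first read off the bundle $\Om^\bullet_{\phi^l_-}(M)$ from Proposition \ref{dec1} and then show that the $\phi^l_-$-harmonicity of Definition \ref{har} collapses onto the ordinary Hodge conditions. For part~1, I would first note that equation~(\ref{p1l}) gives $\Lambda^1_{\phi^l_-}=\Lambda^1(\g)$, so that $\Om^1_{\phi^l_-}(M)=\Om^1(M)$ and $\Om^1_{\phi^l_+}(M)=0$. Consequently every genuine Hodge-harmonic $1$-form $\alpha$ (i.e.\ $d\alpha=0=d^*\alpha$) is automatically $\phi^l_-$-harmonic, since $d_-\alpha=\Pi_-(d\alpha)=0$ and $d^*\alpha=0$. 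Composing the Hodge isomorphism $H^1(M^n)\cong\Hh^1(M^n)$ (here $M^n$ compact is used) with the inclusion $\Hh^1(M^n)\hookrightarrow\Hh^1_{\phi^l_-}(M^n)$ furnishes the monomorphism $i$; injectivity is immediate because $i$ is an inclusion of spaces of honest forms (this is also the content of Proposition \ref{har2} at $i=1$).

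To upgrade $i$ to an isomorphism when $\phi^l_0=\om^3_\g$, I would prove surjectivity. Given $\alpha\in\Hh^1_{(\om^3)_-}(M)$, the condition $d_-\alpha=0$ says $d\alpha\in\Om^2_{(\om^3)_+}(M)$. But Corollary \ref{omg}, equation~(\ref{2om}), gives $\Lambda^2_{(\om_\g)_-}\cong\Lambda^2(\g)$, hence $\Lambda^2_{(\om_\g)_+}=0$ and $\Om^2_{(\om^3)_+}(M)=0$; therefore $d\alpha=0$. Together with $d^*\alpha=0$ this makes $\alpha$ genuinely harmonic, so $\alpha$ lies in the image of $i$ and $i$ is onto.

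For part~2, I would first identify the relevant line bundle: by (\ref{dual1}) with $k=n-l$ (equivalently, dualizing (\ref{p0l})), $\Lambda^{n-l}_{\phi^l_-}=*L_{\phi^l}(\Lambda^0(\g))=\R\cdot *\phi^l_0$, so $\Om^{n-l}_{\phi^l_-}(M)$ is the real line bundle spanned pointwise by $*\phi^l$, with sections exactly $f\cdot{*\phi^l}$, $f\in C^\infty(M^n)$. Next, the condition $d_-\alpha=0$ is vacuous in this degree: by (\ref{pkl}) one has $\Lambda^{n-l+1}_{\phi^l_-}=0$, whence $\Om^{n-l+1}_{\phi^l_-}(M)=0$ and $\Pi_-$ kills all $(n-l+1)$-forms. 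Thus $\alpha=f\cdot{*\phi^l}$ is $\phi^l_-$-harmonic exactly when $d^*\alpha=0$. Applying $*$ and $**=\pm\,\mathrm{id}$, the equation $d^*(f\,{*\phi^l})=0$ is equivalent to $d(f\phi^l)=0$, that is $df\wedge\phi^l+f\,d\phi^l=0$. The hypothesis $\delta\phi^l=0$ enters precisely to discard the term $f\,d\phi^l$ (so that $*\phi^l$ is itself $\phi^l_-$-harmonic), leaving $df\wedge\phi^l=0$, i.e.\ $df\in\Om^1_{\phi^l_+}(M)$. Since (\ref{p1l}) gives $\Lambda^1_{\phi^l_+}=0$ (the degree-one multi-symplecticity underlying Theorem \ref{main3}.1), this forces $df=0$, so $f$ is constant and $\Hh^{n-l}_{\phi^l_-}(M^n)=\R\cdot{*\phi^l}$ is one-dimensional.

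The main obstacle is the $(n-l)$-case. One must be careful that the only surviving harmonicity requirement is $d^*\alpha=0$ (the $d_-$-condition being automatic by (\ref{pkl})), and then correctly transport it through the Hodge star into the purely algebraic wedge equation $df\wedge\phi^l=0$; controlling the extra contribution $f\,d\phi^l$ is exactly where the closedness hypothesis on $\phi^l$ is spent. The decisive input in both parts is the degree-one vanishing $\Lambda^1_{\phi^l_+}=0$ of (\ref{p1l}): it is what converts ``$df$ wedges to zero against $\phi^l$'' into ``$df=0$'' in part~2, mirroring the vanishing of $\Om^2_{(\om^3)_+}(M)$ that forces genuine harmonicity in part~1. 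Conceptually, part~2 is simply the $k=0$ endpoint of the Hodge-dual isomorphism $*L_{\phi^l}\colon\Hh^k_{\phi^l_-}(M^n)\to\Hh^{n-l-k}_{\phi^l_-}(M^n)$ of Proposition \ref{l1}, carrying $\Hh^0_{\phi^l_-}(M^n)\cong\R$ (the constants) onto $\Hh^{n-l}_{\phi^l_-}(M^n)$.
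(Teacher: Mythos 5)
Your argument is correct and is essentially the paper's proof with the details written out. Part 1 is Hodge theory plus the vanishing $\Lambda^2_{(\om_\g)_+}=0$ extracted from (\ref{2om}): the paper phrases surjectivity as $H^1(M^n)=H^1_{(\om_\g)_-}$, invoking Corollary \ref{omg} and Proposition \ref{har2}, while you check directly on harmonic representatives that $d_-\alpha=0$ forces $d\alpha\in\Om^2_{(\om^3)_+}(M)=0$ --- the same computation. Part 2 likewise matches: the identification $\Om^{n-l}_{\phi^l_-}(M^n)=\la *\phi^l\ra_\R$ from Proposition \ref{dec1}, the vacuity of the $d_-$-condition via (\ref{pkl}), and the injectivity of $df\mapsto df\wedge\phi^l$ via (\ref{p1l}) are all what the paper compresses into ``Since $\phi^l$ is harmonic, (\ref{harl}) holds if and only if $f$ is constant.''

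One piece of hypothesis bookkeeping is off, though it mirrors a looseness in the paper itself. In part 2 you assert that $\delta\phi^l=0$ is what discards the term $f\,d\phi^l$ from $d^*(f\,{*\phi^l})=0\Leftrightarrow df\wedge\phi^l+f\,d\phi^l=0$; it is not. Killing that term requires $d\phi^l=0$, which in this part of the section is the standing assumption (cf.\ Proposition \ref{l1}, whose statement assumes $d\phi^l=0$), while $\delta\phi^l=0$ is the additional hypothesis making $\phi^l$ harmonic --- and indeed the paper's proof says ``since $\phi^l$ is harmonic,'' i.e., it uses both. Under the literal hypothesis $\delta\phi^l=0$ alone, your chain (and the paper's) stalls at $df\wedge\phi^l=-f\,d\phi^l$. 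Curiously, the condition as printed in (\ref{harl}), namely $d(f\,{*\phi^l})=0$, is the one for which $\delta\phi^l=0$ would be exactly the right input, since then $df\wedge{*\phi^l}=0$ forces $df=0$ by multi-symplecticity of $\phi^l$ (Theorem \ref{main3}.1); but your $d^*$-route is the one consistent with Definition \ref{har}, under which (\ref{harl}) should be read as $d(f\phi^l)=0$, exactly as you derived.
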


\begin{proof} 
Let $\alpha$ be a harmonic  1-form on $M^n$. Then $\alpha$ is also a $\phi^l_-$-harmonic form. Thus
$ i: H^1 (M^n) \to \Hh_{\phi^l_-} ^1 (M^n)$ is a monomorphism.  If $\phi^l_0 = \om_\g$ then $i$ is an isomorphism,
since $H^1 (M^n) = H^1_{(\om_\g)_-}$ as a direct consequence of Corollary \ref{omg}, taking into account Proposition
\ref{har2}. This proves the first assertion
of Lemma \ref{har1}.

By Proposition \ref{dec1} we can write  $\Om ^{n-l} _{\phi^l_-}(M^n) = \la* \phi^l\ra _\R$, hence
\begin{equation}
\Hh^{n-l}_{\phi^{l}_-} (M^n) = \{ f\phi \in \Lambda ^{n-l}_{\phi^l_-}|\,  d (f *\phi^l) = 0 \}.\label{harl}
\end{equation}

Since $\phi^l$ is harmonic, (\ref{harl})  holds if and only if $f$ is constant.  This completes the proof
of Lemma \ref{har1}.
\end{proof}

\begin{remark}\label{sal} The construction of our differential complexes is similar to the construction of  differential complexes on a 7-manifold with an ``integrable" $G_2$-structure  by Fernandez and Ugarte in \cite{FU1998}. Their  consideration is based on  Reyes work \cite{Reyes1998}, where   Reyes  also  considered    differential complexes  associated to certain  $G$-structures, using some ideas in \cite{Salamon1989}. Many special properties
of our complexes are  related to the cohomological structure of a simple compact Lie algebra $\g$.  A subcomplex $\Om^2_{(*\om_\g)_+}$ will be shown to play a roll in the geometry of  strongly associative submanifolds of dimension 3, see
Proposition \ref{triv5}.
\end{remark}

\section{Special submanifolds in $Aut(\g)$-manifolds}

In this section we also assume that $\g$ is a compact simple Lie algebra  and $M^n$ is a smooth manifold
provided with a 3-form of type $\om_\g$, where $\g$ is a compact  simple Lie algebra.  The Lie bracket on $\g$ extends
smoothly to a cross-product $TM\times TM \to TM$, which  we denote by $[, ]_\g$. We study a natural class of submanifolds in  $Aut(\g)$-manifolds, which generalize the  notion of Lie subgroups in a Lie group $G$,  and show
their algebraic  and geometric rigidity  (Propositions \ref{main5}, \ref{triv4},  \ref{triv5},  \ref{triv6}, Remark \ref{liu}).

\begin{definition}\label{ass} Let $M^n$  be a manifold provided with a 3-form $\om^3$ of  type $\om_\g$.
A submanifold $N^k\subset M^n$ is called {\it associative}, if the tangent  space $TN^k$ is closed under the bracket
$[, ]_\g$.  A submanifold $N^k \subset M^n$ is called {\it coassociative}, if its normal bundle $T^\perp N^k$ is closed
under the Lie bracket $[, ]_\g$.  An associative (resp. coassociative) submanifold $N^k \subset M^n$ is called {\it strongly associative} (resp.
{\it strongly coassociative}), if the restriction of $\om^3$ to $TN^k$ (resp. to $T^\perp N^k$) is multi-symplectic.
\end{definition}

The following Proposition shows the algebraic rigidity of strongly associative and strongly coassociative submanifolds.  It is a direct consequence of Lemma \ref{multi}, so we omit its proof.

\begin{proposition}\label{main5} Assume that $N^k$ is  a strongly associative submanifold in $M^n$. Then
all the tangent spaces $(T_xN^k, [,]_\g)$  are isomorphic to a compact semisimple Lie subalgebra $\h \subset \g$.
Assume that $N^k$ is a strongly coassociative submanifold in $M^n$. Then all the normal spaces $(T^\perp _x N^k, [,]_\g)$
are isomorphic to a compact semisimple Lie subalgebra $\h \subset \g$. 
\end{proposition}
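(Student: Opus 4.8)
The plan is to work pointwise and push everything forward to the Lie algebra $\g$ by means of the linear isomorphisms $I_x : T_xM^n \to \g$ of Remark \ref{fried}.1, which carry $\om^3(x)$ to $\om_\g$. First I would observe that, by the very definition of the cross-product $[,]_\g$ as the $Aut(\g)$-invariant extension of the bracket of $\g$, each $I_x$ is simultaneously a Lie-algebra isomorphism $(T_xM^n,[,]_\g)\to(\g,[,])$ and an isometry $(T_xM^n,K_\g)\to(\g,-\la,\ra)$. Consequently, if $N^k$ is associative, then for every $x\in N^k$ the image $\h_x := I_x(T_xN^k)$ is a genuine Lie subalgebra of $\g$; since $\g$ is compact, $\h_x$ is a compact subalgebra and the Killing form $\la,\ra$ restricts to a negative-definite, hence nondegenerate, form on $\h_x$.

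The core step is a relative version of Lemma \ref{multi}: I claim that $\om_\g|_{\h_x}$ is multi-symplectic if and only if $\h_x$ is semisimple. Note that this restriction is the $3$-form $(X,Y,Z)\mapsto \la X,[Y,Z]\ra$ on $\h_x$, built from the Killing form of $\g$ rather than from the intrinsic Killing form of $\h_x$, so Lemma \ref{multi} does not apply verbatim; nonetheless its argument adapts. Writing $\h_x = \mathfrak{z}\oplus[\h_x,\h_x]$ for the splitting of the compact algebra $\h_x$ into its center and its semisimple part, invariance of $\la,\ra$ gives, for $X\in\mathfrak{z}$, the identity $\la X,[Y,Z]\ra = \la [X,Y],Z\ra = 0$ for all $Y,Z\in\h_x$; thus every central $X$ satisfies $X\rfloor(\om_\g|_{\h_x})=0$, so $\mathfrak{z}$ lies in the kernel of the map $v\mapsto v\rfloor(\om_\g|_{\h_x})$. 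Hence multi-symplecticity forces $\mathfrak{z}=0$, so $\h_x=[\h_x,\h_x]$ is semisimple; conversely, semisimplicity gives $[\h_x,\h_x]=\h_x$, and nondegeneracy of $\la,\ra|_{\h_x}$ then yields injectivity of that map. Since $N^k$ is strongly associative, $\om^3|_{T_xN^k}$ is multi-symplectic, which under $I_x$ is precisely the multi-symplecticity of $\om_\g|_{\h_x}$; therefore each $\h_x$ is compact semisimple.

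To upgrade ``$\h_x$ semisimple for each $x$'' to ``all tangent algebras isomorphic to one fixed $\h$'', I would invoke connectedness of $N^k$ (arguing componentwise otherwise): the $k$-dimensional subalgebras $\h_x$ vary smoothly in $x$, and the isomorphism type of a Lie algebra is a discrete invariant, hence locally constant along a connected manifold. Thus all $(T_xN^k,[,]_\g)$ are isomorphic to a single compact semisimple Lie algebra $\h$, realized as the subalgebra $\h_{x_0}\subset\g$ at a base point. The strongly coassociative case is entirely parallel: because $I_x$ is an isometry onto $(\g,-\la,\ra)$, it carries $T^\perp_xN^k$ to the Killing-orthogonal complement $\h_x^\perp\subset\g$; coassociativity makes $\h_x^\perp$ a subalgebra, strong coassociativity makes $\om_\g|_{\h_x^\perp}$ multi-symplectic, and the same argument shows $\h_x^\perp$ is compact semisimple, with all normal algebras isomorphic by connectedness.

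The main obstacle I anticipate is the bookkeeping in the relative version of Lemma \ref{multi}: one must keep track that the restricted $3$-form uses the ambient Killing form of $\g$ and not the intrinsic one of the subalgebra, so that the clean dichotomy ``multi-symplectic $\iff$ semisimple'' survives. The two inputs that make this go through are the invariance of $\la,\ra$, which places the center in the kernel, and the compactness of $\g$, which forces $\la,\ra|_{\h_x}$ to be nondegenerate; without compactness the equivalence can fail.
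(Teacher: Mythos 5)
Your proof is correct and is essentially the paper's own argument: the paper omits the proof of Proposition \ref{main5}, declaring it a direct consequence of Lemma \ref{multi}, and your write-up is exactly that consequence, with the worthwhile extra care that the restricted $3$-form $\la X,[Y,Z]\ra$ on $\h_x$ involves the ambient Killing form of $\g$ rather than the intrinsic one of $\h_x$, which you handle correctly via compactness of $\g$ (definiteness of $\la,\ra|_{\h_x}$ plus $\h_x=\mathfrak{z}\oplus[\h_x,\h_x]$). The one step stated too glibly is the local constancy of the isomorphism type along connected $N^k$ --- for a general smooth family of Lie brackets the type can jump, and here one should invoke rigidity of semisimple Lie algebras (finitely many isomorphism classes in each dimension, each with an open $GL$-orbit in the variety of brackets, by Whitehead's lemma and Nijenhuis--Richardson), but this is standard and the paper does not address it either.
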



We  call the semisimple Lie algebra $\h$ described in Proposition \ref{main5}  {\it the Lie  type} of a strongly
associative (resp. strongly coassociative) submanifold $N^k$.

The following Lemma is well-known, so we will omit its proof.

\begin{lemma}\label{kero} Assume that $N^k$ is an associative submanifold in a manifold $M^n$ provided with a closed 3-form $\om^3$. Then the distribution $\ker \om^3_{| N^k}$ is integrable.
\end{lemma}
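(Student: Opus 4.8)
The plan is to reduce the statement to the classical fact that the characteristic (kernel) distribution of a \emph{closed} form of constant rank is involutive, and to verify involutivity by a one-line Cartan-calculus computation. Writing $\iota: N^k \hookrightarrow M^n$ for the inclusion, I would first set $\alpha := \iota^* \om^3$, i.e. the restriction $\om^3_{|N^k}$. Since pullback commutes with the exterior differential and $d\om^3 = 0$ by hypothesis, the $3$-form $\alpha$ on $N^k$ is closed: $d\alpha = 0$. By definition the object in question is the distribution $\Dd := \ker \om^3_{|N^k} = \{ v \in TN^k : v \rfloor \alpha = 0\}$, and I will take for granted (as the word ``distribution'' presupposes) that $\Dd$ has constant rank, so that Frobenius' theorem applies as soon as involutivity is established.

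Next I would check involutivity directly. Let $X, Y$ be local sections of $\Dd$, so that $X \rfloor \alpha = 0 = Y \rfloor \alpha$. Using Cartan's formula $\mathcal{L}_X = d\,(X\rfloor \cdot) + X \rfloor\, d(\cdot)$ together with $X\rfloor\alpha = 0$ and $d\alpha = 0$, I obtain $\mathcal{L}_X \alpha = d(X\rfloor\alpha) + X \rfloor d\alpha = 0$. Then the standard identity $[X,Y]\rfloor \alpha = \mathcal{L}_X(Y\rfloor\alpha) - Y \rfloor (\mathcal{L}_X\alpha)$ for the commutator of the Lie derivative with contraction shows that both terms on the right vanish, since $Y \rfloor \alpha = 0$ and $\mathcal{L}_X\alpha = 0$. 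Hence $[X,Y] \rfloor\alpha = 0$, i.e. $[X,Y]$ is again a section of $\Dd$, so $\Dd$ is involutive and therefore integrable by Frobenius' theorem. This is exactly the asserted conclusion.

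I expect the only genuine subtlety to be the constant-rank issue needed to treat $\ker\om^3_{|N^k}$ as an honest distribution; the involutivity computation itself is routine and, notably, relies on closedness of $\om^3$ alone. In particular the associativity of $N^k$ (Definition \ref{ass}) does not enter the integrability argument beyond delimiting the class of submanifolds under study, which is fully consistent with the remark that Lemma \ref{kero} is well known. If one wished to avoid assuming constant rank a priori, an alternative would be to note that $\alpha$ has a fixed $GL$-orbit type along $N^k$ wherever $N^k$ inherits the algebraic structure of $\om_\g$, forcing the kernel dimension to be locally constant; but for the present statement I would simply record the constant-rank hypothesis and invoke Frobenius.
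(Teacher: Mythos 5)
Your proof is correct and is exactly the standard argument the paper has in mind: the paper omits the proof of Lemma \ref{kero} as well known, and the well-known proof is precisely your Cartan-calculus computation, namely that for closed $\alpha=\om^3_{|N^k}$ and sections $X,Y$ of the kernel one has $\mathcal{L}_X\alpha = d(X\rfloor\alpha)+X\rfloor d\alpha =0$ and hence $[X,Y]\rfloor\alpha = \mathcal{L}_X(Y\rfloor\alpha)-Y\rfloor\mathcal{L}_X\alpha = 0$, so Frobenius applies. Your explicit flagging of the constant-rank hypothesis is a sound touch, consistent with the paper's implicit use of the word ``distribution'' (and with the fact that for an associative $N^k$ the pointwise kernel is the center of the subalgebra $(T_xN^k,[,]_\g)$, so its dimension is locally constant when the Lie type of $N^k$ does not jump); note also that, as you observe, associativity plays no role in the involutivity computation itself, only closedness of $\om^3$ does.
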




\begin{proposition}\label{triv4}
1. Assume that $N^3$ is a strongly  associative 3-submanifold in a manifold $M^n$ provided  with a 3-form $\om^3$ of type $\om_\g$. Then  the mean curvature $H$ of $N^3$ satisfies the following equation for any $x\in N^3$
\begin{equation}
H(x)^* = c \cdot \vec{T_xN^3}\rfloor d\om^3(x),\label{min4}
\end{equation}
where  $H(x)^*$ denotes the  covector in $T^*_xM$ dual to $H(x)\in T_xM$ with respect to the Riemannian metric $K_\g(x)$, and $c$ is a positive constant  depending only on the Lie type of  $N^3$.

2. Assume that $N^{n-3}$  is  strongly coassociative submanifold of codimension 3 in a manifold $M^n$ provided  with a  3-form $\om^3$ of type
$\om_\g$. Then  the mean curvature $H$ of $N^{n-3}$ satisfies the following equation for  any $x\in N^{n-3}$
\begin{equation}
H (x)^*  = c\cdot \vec{T_xN^{n-3}}\rfloor  d(*\om^3)(x),\label{min6}
\end{equation}
where $H(x)^*$ denotes the  covector in $T^*_xM$ dual to $H(x)\in T_xM$ with respect to the Riemannian metric $K_\g(x)$, and $c$ is a positive constant depending only of the Lie type of $N^{n-3}$.
\end{proposition}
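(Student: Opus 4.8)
The plan is to reduce both statements to a pointwise first‑variation identity and then to use the Lie‑algebraic structure of the (co)associative plane to annihilate every correction term. I first treat part~1. By Proposition~\ref{main5} the tangent algebra $(T_xN^3,[,]_\g)$ is isomorphic to the unique three–dimensional compact semisimple Lie algebra $\su(2)$, and since $N^3$ is strongly associative the restriction $\om^3_{|T_xN^3}$ is multi‑symplectic, hence (being a $3$‑form on a $3$‑space) a nonzero multiple of the volume form: $\om^3_{|T_xN^3}=\lambda\cdot\mathrm{vol}_{T_xN^3}$ with $\lambda=\la\om^3,\vec{T_xN^3}\ra\neq 0$. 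Because all tangent algebras are isomorphic with the same induced metric $K_\g|_{\h}$, the scalar $\lambda$ is constant along $N^3$ and depends only on the Lie type; orienting $N^3$ so that $\lambda>0$, it suffices to prove $\la H,V\ra=\lambda^{-1}(\vec{T_xN^3}\rfloor d\om^3)(V)$ for every normal vector $V$, with $c$ a positive multiple of $1/\lambda$.

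Next I fix $x\in N^3$, an orthonormal tangent frame $e_1,e_2,e_3$, and a compactly supported normal variation $V$, and compute $\tfrac{d}{dt}\big|_0\int_{N_t}\om^3$ in two ways. Cartan's formula together with Stokes gives $\tfrac{d}{dt}\big|_0\int_{N_t}\om^3=\int_N i^*(V\rfloor d\om^3)=\int_N d\om^3(V,e_1,e_2,e_3)\,\mathrm{vol}_N$. Differentiating $\int_N F_t^*\om^3$ directly and using $\nabla^{LC}_{\partial_t}\partial_a=\nabla^{LC}_{e_a}V$ yields instead $\int_N\big[(\nabla^{LC}_V\om^3)(e_1,e_2,e_3)+\sum_a\om^3(e_1,\dots,\nabla^{LC}_{e_a}V,\dots,e_3)\big]\mathrm{vol}_N$, where $\nabla^{LC}_{e_a}V$ sits in the $a$‑th slot. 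Splitting $\nabla^{LC}_{e_a}V$ into tangential and normal parts, the tangential part $(\nabla^{LC}_{e_a}V)^T=-\sum_b\la\nabla^{LC}_{e_a}e_b,V\ra e_b$ collapses, by antisymmetry and $\om^3_{|T_xN^3}=\lambda\,\mathrm{vol}$, to $-\lambda\sum_a\la\nabla^{LC}_{e_a}e_a,V\ra=-\lambda\la H,V\ra$. This leaves a rotation term $R:=\sum_a\om^3(e_1,\dots,(\nabla^{LC}_{e_a}V)^\perp,\dots,e_3)$ and the term $(\nabla^{LC}_V\om^3)(e_1,e_2,e_3)$.

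The crux is that these two remaining terms vanish pointwise. Each summand of $R$ is a value of $\om^3$ on two tangent and one normal vector, namely $\om^3(W,e_b,e_c)=\la W,[e_b,e_c]_\g\ra$ with $W$ normal; this is $0$ because associativity forces $[e_b,e_c]_\g\in T_xN^3$. For the other term I use (\ref{rel1}) and (\ref{dg2}): under the identification $I_x$ one has $(\nabla^{LC}_V\om^3)(e_1,e_2,e_3)=(d_\g\,\xi(V))(\vec{\h})=\la\xi(V),\delta_\g\vec{\h}\ra$ by the adjunction of Remark~\ref{del1}, where $\vec{\h}=e_1\wedge e_2\wedge e_3$ and $\xi(V)\in\g^\perp$. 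Finally $\delta_\g\vec{\h}=0$: by (\ref{delg}) it equals $[e_1,e_2]_\g\wedge e_3-[e_1,e_3]_\g\wedge e_2+[e_2,e_3]_\g\wedge e_1$, and since $K_\g|_{\h}$ is $\ad_\h$‑invariant the structure constants $\la[e_i,e_j]_\g,e_k\ra$ are totally antisymmetric, so every summand has a repeated index and cancels. Feeding $R=0$ and $(\nabla^{LC}_V\om^3)(e_1,e_2,e_3)=0$ into the integral and letting $V$ range over bump fields gives the pointwise identity $d\om^3(V,e_1,e_2,e_3)=-\lambda\la H,V\ra$, which is exactly (\ref{min4}).

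For part~2 I pass to the Hodge dual $*\om^3$ and the submanifold $N^{n-3}$. Now Proposition~\ref{main5} gives $(T^\perp_xN^{n-3},[,]_\g)\cong\su(2)$, and $*\vec{T_xN^{n-3}}=\pm\vec{T^\perp_xN^{n-3}}$ shows $(*\om^3)_{|T_xN^{n-3}}=\mu\,\mathrm{vol}$ with $\mu=\pm\la\om^3,\vec{T^\perp_xN^{n-3}}\ra$ again a nonzero constant of the Lie type. Running the same first‑variation expansion with $*\om^3$ in place of $\om^3$, and using that the $\so(\g)$‑action commutes with $*$ so that $\rho_*(\tau)(*\om_\g)=*(d_\g\tau)$, the rotation term vanishes because $*\om^3$ on $(n-4)$ tangent and one normal vector is, up to sign, $\om^3$ on one tangent and two normal vectors, which is $0$ by coassociativity; and the normal‑derivative term reduces to $\la\xi(V),\delta_\g\vec{T^\perp_xN^{n-3}}\ra=0$ since the three normal directions span an $\su(2)$‑subalgebra. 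This yields (\ref{min6}). The main obstacle throughout is precisely the vanishing $\delta_\g(\text{fundamental }3\text{-vector})=0$ of the (co)associative plane, together with checking that $\lambda$ (resp. $\mu$) is a genuine constant determined by the Lie type; the rest is bookkeeping with (\ref{kn1}), (\ref{rel1}) and (\ref{delg}).
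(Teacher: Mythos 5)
Your argument is correct, and it reaches (\ref{min4}) and (\ref{min6}) by a genuinely different route than the paper. The paper's proof is essentially a citation: it verifies the hypotheses of the relative-calibration machinery of \cite{Le1990} --- namely that $\vec{T_xN^3}$ (resp.\ $\vec{T_xN^{n-3}}$) is a critical point of $f_{\om_\g}$ (resp.\ $f_{*\om_\g}$) on the Grassmannian with nonzero critical value $c^{-1}$, imported from \cite[Theorem 3.1]{Le1990} --- and then quotes the first-variation formula \cite[Lemma 1.1]{Le1990} to get (\ref{min5}). You instead re-derive the first variation from scratch, comparing the Cartan--Stokes computation of $\frac{d}{dt}\big|_0\int_{N_t}\om^3$ with the covariant expansion of the Lie derivative, and then kill the two correction terms by Lie-algebraic arguments. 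Your vanishing of the rotation term, $\om^3(W,e_b,e_c)=\la W,[e_b,e_c]_\g\ra=0$ for normal $W$, is exactly the Grassmannian criticality that the paper outsources to \cite{Le1990}; in your version it is transparently equivalent to associativity. Your second vanishing, $(\nabla^{LC}_V\om^3)(\vec{T_xN^3})=\la\xi(V),\delta_\g\vec{\h}\ra=0$ via (\ref{rel1}), (\ref{dg2}), (\ref{delg}) and the total antisymmetry of the structure constants of the tangent algebra, has no visible counterpart in the paper's two-line proof --- it is hidden inside the cited lemma --- yet it is genuinely needed: without $\delta_\g\vec{\h}=0$ the pointwise identity would retain the extra term $(\nabla_V\om^3)(\vec{T_xN^3})$, which $d\om^3(V,\vec{T_xN^3})$ alone does not control; the same remark applies to $\delta_\g\vec{T^\perp_xN^{n-3}}=0$ in part 2, where you also correctly borrow the identity $\rho_*(\tau)(*\om_\g)=*(\rho_*(\tau)\om_\g)$ that the paper states in the proof of Proposition \ref{triv5}. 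What each approach buys: the paper's is shorter and places strongly (co)associative submanifolds inside the general framework of relative calibrations (whence stability-type information), while yours is self-contained, makes explicit where strong associativity versus mere associativity enters ($\lambda\neq 0$ versus the rotation term), and your localization is clean precisely because replacing $V$ by $\varphi V$ only creates rotation-type terms, which vanish. One point to tighten: the constancy of $\lambda$ along $N^3$ requires, strictly speaking, that the conjugacy class of the embedded subalgebra $I_x(T_xN^3)\subset\g$ --- not merely its abstract isomorphism type --- be locally constant (continuity plus discreteness of the set of such classes), since distinct $\su(2)$-embeddings have distinct indices and hence distinct $\lambda$; this is the same implicit content carried by the paper's phrase ``depending only on the Lie type''.
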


\begin{proof}  Let us choose $c^{-1} = \om_\g (\vec{T_xN^3})$  for strongly associative  submanifold $N^3$, $x\in N^3$, and
let us choose $c^{-1} = *(\om_\g) (\vec {T_x N^{n-3}})$ for strongly coassociative submanifold $N^{n-3}$, $x\in N^{n-3}$.  By  Theorem 3.1 in \cite{Le1990} $c$ is a nonzero critical value of the function $f_{\om_\g}(x)$  (resp.
$f_{*\om_\g}(x)$) defined on each  Grassmannian $Gr_3^+ (T_xM^n)$ of oriented 3-vectors, $x\in N^3$,  (resp. on $Gr_{n-3}^+ (T_xM^n)$,
$x\in N^{n-3}$)  by setting $f_{\om_\g} (u): = \om_\g(x) (\vec u)$ (resp. $f_{\om_\g} (u): =* \om_\g(x) (\vec u)$).
 
Now Proposition \ref{triv4} follows directly from   \cite[Lemma 1.1]{Le1990}, where we computed
 the first variation  formula for  a Riemannian submanifold $N^k\subset M$ satisfying  the condition that  there is a differential form $\om$ on $M$  such that $\om_{| N^k} = vol_{|N^k}$ and moreover the value $\om(x)(\vec T_xN^k)$ is a nonzero critical value
 of the function $f_\om(x)$ defined on the   Grassmannian $Gr_k^+ (T_xM))$ by  setting $f_\om (u): = \om (\vec u)$. For a strongly associative submanifold $N^3$ Lemma 1.1 in \cite{Le1990}  yields
 
\begin{lemma}\label{lem1v} (cf \cite[Lemma 1.1]{Le1990}) For any point $x \in N^3$   and for any normal vector $X\in T^\perp _xN^3$ we have
\begin{equation}
\la -H(x), X \ra = c \cdot d\om^3 (X\wedge \vec{T_xN^3}).
\label{min5}
\end{equation}
\end{lemma}

Clearly (\ref{min4}) is  equivalent to (\ref{min5}).  In the same way we prove (\ref{min6}).
\end{proof}

Strongly associative  or coassociative submanifolds  of (co)dimension 3  thus satisfy the 
equation in Proposition \ref{triv4}, which is a first order  perturbation of the
second order elliptic equation describing  minimal submanifolds. 
On the other hand, as in \cite[Proposition 2.3]{Robles2008} we can also define strongly associative   submanifolds (resp. strongly coassociative manifolds) as integral submanifolds of some differential system on $M$. 

\begin{proposition}\label{triv5}  A 3-submanifold  $N^3\subset M^n$ is strongly associative, if   and only if the restriction of any 3-form on $M$ with value in the subbundle $d_\g (\Om^2_{(*\om_\g) _+})$ to $N^3$ vanishes.
A submanifold $N^{n-3}\subset M^n$  is strongly coassociative, if the restriction of  any
$(n-3)$-form on $M$ with value in the subbundle $*d_\g (\Om^2_{(*\om_\g) _+})$ to $N^{n-3}$ vanishes.
\end{proposition}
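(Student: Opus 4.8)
The plan is to reduce the statement to a pointwise assertion about the tangent $3$-plane $P := T_xN^3 \subset \g$ (identified with $\g$ by the $Aut(\g)$-structure) and to identify the differential ideal explicitly. First I would pin down the subbundle $d_\g(\Om^2_{(*\om_\g)_+})$. By Proposition \ref{dec1} applied to $\phi^l = *\om_\g$ (degree $n-3$), one has $\Lambda^2_{(*\om_\g)_-} = *L_{*\om_\g}(\Lambda^1(\g)) = \{\,v\rfloor \om_\g : v\in\g\,\} = d_\g(\g)$, using the identity $v\rfloor\om_\g = d_\g v$ coming from $\om_\g(v,\cdot,\cdot)=\la v,[\cdot,\cdot]\ra$ together with Remark \ref{del1}. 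Since $\Lambda^2(\g)=d_\g(\g)\oplus\g^\perp$ orthogonally, definition (\ref{phim}) gives $\Lambda^2_{(*\om_\g)_+}=\g^\perp$, so fiberwise $d_\g(\Om^2_{(*\om_\g)_+}) = d_\g(\g^\perp) = \Lambda^3_d(\g)$ in the decomposition (\ref{dec3}). The restriction of every section of this subbundle to $N^3$ vanishes if and only if, at each $x$, the unit $3$-vector $\vec{T_xN^3}$ is orthogonal to $\Lambda^3_d(\g)$, i.e. $\vec{T_xN^3}\in\la\om_\g\ra_\R\oplus\Lambda^3_\delta(\g)$. Because $\om_\g$ is harmonic and $\Lambda^3_\delta(\g)=\delta_\g(\Lambda^4(\g))$ is coexact while $\delta_\g$ is injective on $\Lambda^3_d(\g)$, this is exactly the condition $\delta_\g\vec{T_xN^3}=0$.

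The key computation is then to show $\delta_\g\vec P = 0$ if and only if $P$ is closed under $[,]_\g$. Taking an orthonormal basis $e_1,e_2,e_3$ of $P$ and applying (\ref{delg}), I would split each bracket as $[e_i,e_j]=[e_i,e_j]^P+[e_i,e_j]^\perp$. The part of $\delta_\g\vec P$ lying in $\Lambda^2 P$ has coefficients of the form $\la[e_i,e_j],e_k\ra$ with a repeated index among $i,j,k$; by invariance of the Killing form these structure constants are totally antisymmetric, so every such coefficient vanishes and the $\Lambda^2 P$-part is identically zero regardless of $P$. The surviving part lies in $P^\perp\wedge P$ and is a sum $\sum [e_i,e_j]^\perp\wedge(\cdots)$ whose terms are linearly independent; it vanishes precisely when all $[e_i,e_j]^\perp=0$, i.e. when $P$ is a subalgebra. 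Hence the vanishing condition is equivalent to associativity of $N^3$.

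It remains to upgrade ``subalgebra'' to \emph{strongly} associative. For the ``only if'' direction this is clean: by Proposition \ref{main5} a strongly associative $N^3$ has each $T_xN^3$ isomorphic to a compact semisimple subalgebra, hence a subalgebra, so $\delta_\g\vec{T_xN^3}=0$ and the restrictions vanish. \textbf{The hard part is the ``if'' direction.} Here the computation above only yields that $P$ is a subalgebra, whereas strong associativity additionally demands that $\om^3|_{T_xN^3}$ be multi-symplectic, equivalently (Lemma \ref{multi} applied to the $3$-dimensional $P$) that $\om_\g(\vec P)\neq 0$, equivalently that $P$ be \emph{semisimple} rather than abelian. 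The genuine obstacle is therefore to exclude the abelian tangent planes, which are exactly the associative-but-not-strong ones and which sit inside $\Lambda^3_\delta(\g)$. I would resolve this through the classification of $3$-dimensional compact subalgebras: such a subalgebra is either abelian $\R^3$ or $\su(2)$, and a subalgebra $P$ with $\vec P$ having nonzero $\om_\g$-component is necessarily $\su(2)$, on which $\om_\g|_P = \om_{\su(2)}$ is multi-symplectic by Lemma \ref{multi}. Pinning down that the full differential-ideal condition forces the $\su(2)$-type (equivalently, that the multi-symplectic non-degeneracy is built into the geometry being characterized) is where the proof must do its real work.

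Finally, the coassociative case is obtained by Hodge duality. The subbundle $*d_\g(\Om^2_{(*\om_\g)_+}) = *\Lambda^3_d(\g)$ pairs with $\vec{T_xN^{n-3}}$ in the same way that $\Lambda^3_d(\g)$ pairs with the normal $3$-vector $\vec{T^\perp_xN^{n-3}}$, since $*$ is an isometry carrying $\vec{T_xN^{n-3}}$ to $\pm\vec{T^\perp_xN^{n-3}}$. Thus the vanishing of the restrictions becomes $\delta_\g\vec{T^\perp_xN^{n-3}}=0$, i.e. $T^\perp N^{n-3}$ is a subalgebra (coassociativity), and Proposition \ref{main5} again forces the $\su(2)$-type, upgrading to strong coassociativity. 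In both cases the decisive and most delicate point is the same: the purely linear vanishing condition governs the subalgebra (associativity) property, and promoting it to the strong, multi-symplectic property requires ruling out the abelian tangent or normal $3$-plane, handled via Proposition \ref{main5} and the $3$-dimensional subalgebra classification.
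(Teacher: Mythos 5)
Your linear-algebra core is correct and is in fact far more self-contained than the paper's own proof: the paper disposes of both assertions in two sentences, citing \cite[Proposition 2.3]{Robles2008} together with formula (\ref{dg2}) (and, for the coassociative case, the identity $\rho_*(\tau)(*\om_\g)=*(\rho_*(\tau)\om_\g)$). What you do by hand --- identifying $\Lambda^2_{(*\om_\g)_+}=\g^\perp$ via Proposition \ref{dec1}, hence the subbundle as $d_\g(\g^\perp)=\Lambda^3_d(\g)$, and showing that vanishing of all the restrictions is equivalent to $\delta_\g\vec{T_xN^3}=0$, which by (\ref{delg}) and the total antisymmetry of the structure constants is equivalent to $T_xN^3$ being a subalgebra --- is exactly what the citation delivers, since by (\ref{dg2}) the forms $d_\g\tau$, $\tau\in\g^\perp$, are the infinitesimal variations $\rho_*(\tau)\om_\g$ of $\om_\g$ transverse to its stabilizer, and $\delta_\g\vec{P}=0$ is the criticality condition for $f_{\om_\g}$ on the Grassmannian. (One step worth making explicit in your reduction: $\delta_\g\vec{P}$ automatically lies in $\g^\perp$, because $\la\delta_\g\vec{P},d_\g v\ra=\la\vec{P},d_\g^2 v\ra=0$, so orthogonality to $\g^\perp$ alone already forces $\delta_\g\vec{P}=0$.) Your Hodge-duality treatment of the coassociative assertion is likewise the same mechanism the paper invokes through the star-equivariance identity.

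The gap you flag at the end is genuine, and it is not closable in the way you hope: the vanishing of the differential system characterizes \emph{associativity}, not strong associativity. An abelian tangent 3-plane $P$ is a subalgebra, so $\delta_\g\vec{P}=0$ and every restriction vanishes, yet $\om^3|_P=0$, which is not multi-symplectic; concretely, a 3-torus inside a compact simple Lie group of rank at least $3$ (say $SU(4)$ with its bi-invariant Cartan 3-form) satisfies the hypothesis of the ``if'' direction but is not strongly associative. Your attempted repair via Proposition \ref{main5} is circular --- that proposition \emph{assumes} strong associativity, so it serves only the ``only if'' direction --- and no classification of 3-dimensional compact subalgebras can exclude the abelian stratum, because the vanishing condition is a closed condition that the abelian planes satisfy. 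The honest conclusion is that the ``if'' direction requires the additional open hypothesis that $\om^3|_{TN^3}$ be nowhere zero (equivalently, that the tangent planes realize a nonzero critical value of $f_{\om_\g}$). This imprecision sits in the Proposition's statement itself, and the paper's citation-style proof inherits it silently from Robles' criterion, which characterizes critical submanifolds of all critical values, zero included. So your computation is sound and your instinct about where the difficulty lies is exactly right; what is missing is not an idea you failed to find but a nondegeneracy hypothesis the statement needs.
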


\begin{proof}  The first  assertion of Proposition \ref{triv5} is a direct consequence of \cite[Proposition 2.3]{Robles2008}, which describes the set of $\phi$-critical planes, applying  to $\phi = \om_\g$   taking into account the formulas  (\ref{dg2}) and (\ref{striv}). 

 The second assertion is a consequence  of  \cite[Proposition 2.3]{Robles2008}, the formula
(\ref{dg2})  and the following  identity
$$\rho_*(\tau)( * \om_\g) = * (\rho_* (\tau) \om_\g)$$
for any $\tau \in \so (\g) \cong \Lambda ^2 (\g)$.
\end{proof}

\begin{remark}\label{simcal} Propositions \ref{triv4} and \ref{triv5} show that  strongly associative  submanifolds in dimension 3 and
strongly coassociative submanifolds in co-dimension 3 like  calibrated submanifolds  are
 solutions  to a  special  class of first order  PDE, which  are in the same time  solutions to a special class of  elliptic second order equations.
 In particular, if  $d\phi^k = 0$, then  strongly (co)associative  submanifolds of  (co)dimension 3  in Propositions \ref{triv4} and \ref{triv5}  are calibrated submanifolds.
\end{remark}

 Proposition \ref{triv4} can be generalized for other strongly (co)associative  submanifolds $N^k$, when $k \ge 4$, under some additional conditions  on the Lie type of $N^k$. We say that a compact Lie subalgebra $\h \subset \g$ is {\it critical  with respect to an $Ad(\g)$-invariant $k$-form $\phi^k_0$ on $\g$}, if $\vec{\h}$ is  a critical point corresponding to a nonzero critical value
 of the function $f_{\phi^k_0}$ defined on $Gr _k ^+(\g)$ as we defined above for $\phi^k_0 = \om_\g$.

\begin{example}  In \cite{Le1990} we showed  that the canonical embedded Lie subalgebra $\su(n) \to  \su(m)$, $n \le m$,
is critical  with respect to  an $Ad(\g)$-invariant  form $\phi^{n^2 -1}$  which is  the wedge product of elementary
$Ad(\g)$-invariant $(2k-1)$-forms $\theta ^{2k-1}$  defined on $\su(m)$ as follows
\begin{eqnarray}
\theta ^{2k-1} (X_1, \cdots, X_{2k-1}) =Re \sum_{\sigma \in \Sigma_{2k-1}} \eps_\sigma Tr (X_{\sigma (1)} \cdots X_{\sigma(2k-1)}) \text{ if  $k$ is even},\nonumber \\
\theta ^{2k-1} (X_1, \cdots, X_{2k-1}) =Im \sum_{\sigma \in \Sigma_{2k-1}} \eps_\sigma Tr (X_{\sigma (1)} \cdots X_{\sigma(2k-1)}) \text{ if  $k$ is odd},\nonumber
\end{eqnarray}
where  $X_i \in \su(m)$  and the multiplication is  the usual  matrix multiplication.

In \cite{Le1990} and \cite{Le1990b}  using different methods we proved that  the canonical embedded  group
$SU(n) \to SU(m)$ is  stable minimal with respect to the bi-invariant Riemannian metric on $SU(m)$.
\end{example}
 
\begin{proposition}\label{triv6} Assume that  a Lie algebra  $\h\subset \g$ is a critical with respect to an $Aut^+(\g)$-invariant  $k$-form $\phi^k_0$   on $\g$.  Then any  associative submanifold $N^k$  of  Lie type $\h$ satisfies
\begin{equation}
H(x)^* =  \vec{T_xN^k}\rfloor d\phi^k(x),\label{min4}
\end{equation}
where  $H(x)^*$ denotes the  covector in $T^*_xM$ dual to $H(x)\in T_xM$ with respect to the Riemannian metric $K_\g(x)$, and $\phi^k$ is the extension of $\phi^k_0$ on $M^n$. Moreover $N^k$ is orientable.
A similar statement also holds for  coassociative  submanifolds $N^{n-k} \subset M^n$.
\end{proposition}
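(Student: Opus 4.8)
The plan is to follow the proof of Proposition \ref{triv4} verbatim, reducing everything to the first variation formula \cite[Lemma 1.1]{Le1990}. That lemma applies to a Riemannian submanifold $N^k\subset M$ as soon as there is a $k$-form $\om$ on $M$ with $\om_{|N^k}=vol_{|N^k}$ for which $\om(x)(\vec{T_xN^k})$ is a nonzero critical value of $f_\om(x)\colon u\mapsto \om(x)(\vec u)$ on the Grassmannian $Gr_k^+(T_xM)$, and under these hypotheses it yields $\la -H(x),X\ra = d\om(X\wedge\vec{T_xN^k})$ for every normal $X$, which is equivalent to (\ref{min4}). Thus the whole task is to verify that, after a global rescaling, $\phi^k$ meets these hypotheses at every point of $N^k$.

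First I would show that $\phi^k$ restricts to a nowhere-vanishing form on $N^k$. Choosing at each $x$ a trivialization $I_x\colon T_xM\to\g$ of the $Aut^+(\g)$-structure carrying $\phi^k(x)$ to $\phi^k_0$, the hypothesis that $N^k$ is associative of Lie type $\h$ identifies $I_x(T_xN^k)$ with a subalgebra in the $Aut^+(\g)$-orbit of $\h$, so the $Aut^+(\g)$-invariance of $\phi^k_0$ gives $|\phi^k(x)(\vec{T_xN^k})| = |\phi^k_0(\vec{\h})|$, the nonzero critical value furnished by criticality of $\h$. Hence $\phi^k_{|N^k}$ never vanishes; this already yields orientability of $N^k$, and after orienting $N^k$ so that $\phi^k(\vec{T_xN^k})>0$ and rescaling $\phi^k$ by the reciprocal of this constant we obtain $\phi^k_{|N^k}=vol_{|N^k}$.

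The decisive step is transferring the algebraic criticality of $\h$ on $\g$ to pointwise criticality on $M$. Since $\phi^k_0$ is $Aut^+(\g)$-invariant, the function $f_{\phi^k_0}$ on $Gr_k^+(\g)$ does not depend on the choice of $I_x$, and $I_x$ is an isometry intertwining $f_{\phi^k}(x)$ with $f_{\phi^k_0}$ and sending $\vec{T_xN^k}$ to $\vec{\h}$. As $\vec{\h}$ is by assumption a critical point of $f_{\phi^k_0}$ with nonzero critical value, $\vec{T_xN^k}$ is a critical point of $f_{\phi^k}(x)$ with the same value, so \cite[Lemma 1.1]{Le1990} applies at every $x\in N^k$ and delivers (\ref{min4}). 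The coassociative case is handled exactly as in the passage from the first to the second part of Proposition \ref{triv4}: one works with the $(n-k)$-form $*\phi^k$ on the tangent directions of $N^{n-k}$, using that under the Hodge-star diffeomorphism $Gr_k^+(\g)\to Gr_{n-k}^+(\g)$ the functions $f_{\phi^k_0}$ and $f_{*\phi^k_0}$ correspond, so that criticality of $\h$ for $\phi^k_0$ becomes criticality of $\h^\perp$ for $*\phi^k_0$.

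The step I expect to be the main obstacle is the constancy underlying the transfer: associativity of Lie type $\h$ only guarantees that each tangent algebra $T_xN^k$ is abstractly isomorphic to $\h$, whereas the invariance argument requires $I_x(T_xN^k)$ to lie in a single $Aut^+(\g)$-orbit so that both the value $\phi^k_0(\vec{T_xN^k})$ and its criticality are genuinely constant along $N^k$. Since isomorphic compact semisimple subalgebras of $\g$ need not be conjugate (distinct embeddings can give distinct values of $\phi^k_0$), closing this gap needs either the relevant conjugacy theorem for compact subalgebras or a strengthening of the definition of Lie type to orbit membership; once this constancy is in place, the remaining arguments are the direct specialization of \cite[Lemma 1.1]{Le1990} already carried out for Proposition \ref{triv4}.
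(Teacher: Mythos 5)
Your argument is correct and is essentially the paper's own proof: the paper likewise normalizes so that $1$ is the critical value of $f_{\phi^k_0}$ at $\vec{\h}$, applies \cite[Lemma 1.1]{Le1990} pointwise, and obtains orientability from the fact that $\phi^k_{|N^k}$ is a nowhere-zero form of top degree; your version merely spells out the rescaling and the pointwise transfer of criticality that the paper compresses into one line. The conjugacy subtlety you flag at the end is real but is not resolved in the paper either --- its proof tacitly reads ``Lie type $\h$'' as membership in the $Aut^+(\g)$-orbit of $\h$ (a reading under which, by rigidity of semisimple subalgebras, the orbit is locally constant along a connected $N^k$) --- so on this point your attempt is, if anything, more careful than the published argument.
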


\begin{proof} We  can   assume that 1 is a critical value of
the function $f_{\phi^k_0}$ defined on $Gr _k^+ (\g)$  and $\h$ is a critical point corresponding to this value.    Now applying \cite[Lemma 1.1]{Le1990} to
$N^k$ we obtain Proposition \ref{triv6} immediately. (The orientability of $N^k$ is a consequence of the fact that the
restriction of $\phi^k$ to $N^k$ is  a nonzero form of top degree).
\end{proof}

\begin{remark}\label{liu} As a consequence of a result  by Robles \cite[Lemma 5.1] {Robles2008}, the differential ideal generated by  differential forms  taking value in $d_\g (\Om^2_{(\om_\g)_+})$ (resp. in $ *d_\g (\Om^2_{(\om_\g)_+})$  is differentially closed, if $M^n$ is  torsion-free. Robles also  shows that in the torsion-free case the  differential system  corresponding to coassociative submanifolds of codimension 3 is not in involution.
It is possible that the considered differential system is not minimal  in the sense that there is  a smaller differential system  having the same  integral submanifolds, see \cite[Remark 5.2]{Robles2008}.
\end{remark}


\section{Final remarks}

 1. The richness of geometry of   manifolds equipped with a simple Cartan 3-form  demonstrated in our note  shows that these manifolds could be considered as  natural generalizations of the notion of compact simple Lie groups in the category of Riemannian manifolds. There are many interesting 1-order differential operators on   such manifolds,  which we could exploit further to understand the geometry of the underlying manifolds.  One of  the  operators we have in our mind is the elliptic self-adjoint operator $d + d^* + \lambda ( d_\g + \delta_\g)$, $\lambda \in \R$, acting on $\Om^*(M)$. It would be interesting to develop the theory further for  manifolds of special   algebraic type, especially to find topological constrains of  manifolds with harmonic $\phi^l$-forms.




2.  We have discussed in this note only   first order invariants of considered manifolds  and do not discuss the curvature of the underlying Riemannian metric as well as relations between    first oder invariants and  second order  invariants   of these manifolds.

3. It would be interesting to study  Lie group actions and moments maps  on manifolds with a closed  form  of type
$\phi ^l$, see also \cite{MS2010}.

4. Find sufficient and necessary conditions  to ensure the local existence   of a class of (co)associative submanifolds.



\section{Appendix. Riemannian manifolds equipped with a parallel 3-form}

In this appendix we describe  simply-connected complete Riemannian manifolds provided with  a parallel 3-form. Let us first recall the following basis definitions.

A 3-form $\om ^3 $ on $\R ^ 6 = \C^ 3$ is called {\it a Special Lagrangian 3-form}  (or $SL$-form), if
$\om$ can be written as $\om ^3 =  Re (dz^1 \wedge dz^2\wedge dz^3)$.

A 3-form $\om ^3$ on $\R^7 = Im\, \O$ is called {\it of $G_2$-type},  if $\om^3$ is on the  $GL(\R^7)$-orbit of the form $\om^3_0 (X, Y, Z): = 
\la XY, Z \ra$, where  $\la, \ra$  is the inner product on the octonion algebra $\O$.

A 3-form $\om ^3$ on $\R^{2n+1}$ is called  {\it of product type of maximal rank}, if $\om^3$ can be written
as  $\om ^3 = dz \wedge (\sum _{i =1} ^n dx^i \wedge dy ^i)$.

A 3-form $\om ^3$ on $\R^n$ is called {\it a compact simple Cartan form}, if $\om ^3 = \om_\g$ for
some compact simple Lie algebra $\g$.



\begin{theorem}\label{3para}  Let $M^n$ be a connected simply connected complete Riemannian manifold provided with
a parallel 3-form $\om^3$. Then $(M^n, \om ^3)$ is a direct product of basis Riemannian manifolds $M_i$  provided with   a paralell $k$-form $\om _i$ of the following types.\\
1) A  Calabi-Yau 6-manifold  $M^6$ provided with a $SL$ 3-form.\\
2) A (torsion-free) $G_2$-manifold  $M^7$ provided with a 3-form of $G_2$-type.\\
3) A  compact simple Lie group or its noncompact dual  with the associated Cartan 3-form.\\
4) A  K\"ahler manifold $M^{2n}$ with   a K\"ahler form $\om ^2$.\\
5) A  Euclidean space $(\R^k, \phi^3)$ provided with a parallel multi-symplectic 3-form $\phi^3$.\\
6) A Riemannian manifold  $N^l$  with the zero  3-form. \\
The 3-form $\om^3$ is a  sum of the $SL$-3-forms,  $G_2$-forms, a multiple by a nonzero constant of the Cartan 3-forms, and    3-forms  of
product type of maximal rank, whose precise description will be given in the proof below. 
\end{theorem}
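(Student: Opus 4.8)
The plan is to turn this global statement into a representation-theoretic question on each irreducible holonomy factor. First I would invoke the de Rham decomposition theorem: since $M^n$ is connected, simply connected and complete, it splits isometrically as $M^n = \R^{n_0}\times M_1\times\cdots\times M_r$ with each $M_i$ irreducible and non-flat, and its holonomy group is $\{1\}\times H_1\times\cdots\times H_r$ where $H_i$ acts irreducibly on $T_xM_i$. A parallel form is precisely a holonomy-invariant element of $\Lambda^3 T^*_xM^n$, so the whole problem reduces to computing the $(\prod_i H_i)$-invariants of $\Lambda^3(V_0\oplus V_1\oplus\cdots\oplus V_r)$, where $V_0=(\R^{n_0})^*$ and $V_i=T^*_xM_i$.

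Second, I would use the multilinear decomposition $\Lambda^3(\bigoplus_i V_i)=\bigoplus \Lambda^{a_0}V_0\otimes\cdots\otimes\Lambda^{a_r}V_r$ over $\sum a_i=3$, together with the fact that an invariant in a tensor product must be invariant slotwise. The key input is that an irreducible non-flat factor carries no parallel $1$-form, i.e.\ $\Lambda^1 V_i=V_i$ has no $H_i$-invariant; hence every surviving summand has $a_i\in\{0,2,3\}$ for $i\geq 1$, and $a_i=2$ forces $\Lambda^2 V_i$ to contain an invariant, i.e.\ $M_i$ to be K\"ahler. This restricts the possible summands to exactly three shapes: a parallel $3$-form on the flat factor $\R^{n_0}$; a parallel $3$-form supported on a single $M_i$; and the wedge of a K\"ahler form on a K\"ahler factor with a parallel $1$-form, which must come from $\R^{n_0}$. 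For the flat summand I would further split $\R^{n_0}=\ker L_{\phi^3}\oplus(\ker L_{\phi^3})^\perp$ so as to present its $3$-form as a multi-symplectic form on a Euclidean factor plus a complementary flat factor carrying the zero form.

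The heart of the argument, and the step I expect to be the main obstacle, is identifying which irreducible factors support a parallel $3$-form, i.e.\ a degree-$3$ invariant of $H_i$. Here I would run through Berger's holonomy list: $SO(m)$ with $m\neq 3$, together with $Spin(7)$ and $Sp(k)\cdot Sp(1)$, have their lowest invariant in degree $4$ and so contribute nothing; $U(m)$ and $Sp(k)$ have invariants only in even degree, so they enter only through the K\"ahler-form term; holonomy $SU(3)$ contributes the real part of the holomorphic volume form, which is exactly the Special Lagrangian $3$-form in dimension $6$; holonomy $G_2$ in dimension $7$ contributes its defining $3$-form. The remaining case is an irreducible symmetric space $G/K$, where by Cartan's theorem parallel forms coincide with invariant forms and hence with real cohomology; the delicate point is to show that an $Ad(K)$-invariant $3$-form on $\sm=T_o(G/K)$ exists precisely in the group case, where $\sm\cong\g$ carries the Cartan form $\om_\g$ (and its noncompact dual), while all other pairs have $b_3=0$. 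I would settle this by reducing to the invariants of $\Lambda^3\sm^*$ and matching against the known Betti numbers of Table \ref{hit}.4.1 and the Hopf--Samelson description of $H^*(G)$, observing that the geodesic symmetry acting by $-1$ on $\sm$ is no obstruction since it need not lie in the holonomy group $K$.

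Finally, I would assemble these local contributions into the stated product decomposition, absorbing the low-dimensional coincidence $\so(3)\cong\su(2)$ into the Cartan/volume case, recording that each irreducible summand is a nonzero multiple of its canonical form (Cartan, $SL$, or $G_2$) and that the K\"ahler summands produce the K\"ahler-form-times-$1$-form and product-type pieces. Checking that the families $1)$--$6)$ exhaust the possibilities then yields the precise description promised in the statement.
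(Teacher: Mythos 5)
Your proposal is correct and follows the same global skeleton as the paper's proof: the de Rham splitting for a connected, simply connected, complete manifold, the reduction of parallel forms to holonomy-invariant elements at a point, Berger's list for the non-symmetric irreducible factors, and the fact that among irreducible symmetric spaces only the group type (and its noncompact dual) has an invariant $3$-form. Where you genuinely diverge is in the treatment of the mixed terms. The paper handles a factor $M_i$ on which $\om^3$ restricts to zero by a rank argument: it contracts $\om^3$ with one or two vectors transverse to $M_i$, uses irreducibility of $G_i$ to exclude an invariant $1$-form (``rank $1$'') and to force the invariant $2$-form $\Om_i$ to be of maximal rank (Lemma \ref{product}), then reconstructs the cross term explicitly via the dual bivector $\Lambda_i$ and the covector $\hat u_i$, and verifies in Lemma \ref{final} that the remainder vanishes. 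You instead decompose $\Lambda^3(V_0\oplus\cdots\oplus V_r)$ into the summands $\Lambda^{a_0}V_0\otimes\cdots\otimes\Lambda^{a_r}V_r$ and use slotwise invariance for the product holonomy group; since no non-flat irreducible factor carries an invariant $1$-form, only the shapes $(a_0=3)$, $(a_i=3)$, and $(a_i=2,\,a_0=1)$ survive, which yields in one stroke the paper's trichotomy together with the fact that the $1$-form leg must lie in the flat factor. The paper's hands-on route buys the explicit formula $\om^3=\om^3_m+\sum_i\Om^i\wedge\hat u_i$ promised in the statement; your route buys brevity and makes both the K\"ahler conclusion and the location of the $1$-form automatic. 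Two remarks: your observation that the geodesic symmetry, acting by $-1$ on $\sm$, need not lie in the (connected) holonomy group is a correct and necessary point in the group case — the Cartan $3$-form is anti-invariant under it — which the paper leaves implicit; and your claim that $SO(m)$, $m\neq 3$, has its lowest invariant in degree $4$ is inaccurate (its invariants sit only in degrees $0$ and $m$), though the conclusion you draw, namely that there is no degree-$3$ invariant for $m>3$, is exactly right, and your explicit absorption of the residual $m=3$ volume-form case into the $\su(2)$ picture is if anything more careful than Lemma \ref{3irr}, which passes over generic holonomy $SO(3)$ in silence.
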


\begin{proof} Let $G$ be the holonomy  group of  a connected simply connected Riemannian manifold $M^n$ provided with
a parallel 3-form $\om^3$.  By the de Rham theorem, see e.g. \cite[10.43, chapter 10]{Besse1987}, $M^n$ is a product of Riemannian manifolds $M_i, i \in I,$  such that
$G= \Pi_{i\in I} G_i$ where $G_i$ acts irreducibly on $TM_i$ for $i\ge 1$ and $M_0$  is flat.  Now  choose an arbitrary 
point $x \in M^n$. Let $L_{\om^3}$ be the map defined   in (\ref{multi}). Since $\ker L_{\om^3}$ is a $G$-module, we have the following decomposition
$$\ker L_{\om^3} = \oplus _{i \in J}T_xM_i \oplus  (\ker L_{\om^3} \cap T_xM_0),$$
 where $J$ is some  subset of $I$.  Let $N$ be  a submanifold in $M$ satisfying the following conditions:\\
1.  $N\ni x$ and $T_x N = \ker L_\om$,\\
2. $N$ is invariant under the action of $G$.

Note that there is a unique  submanifold $N\subset M$ satisfying the conditions 1 and 2.
Let $W$ be the orthogonal complement to $\ker L$.  Denote by $\R^p$ the  subspace of  the flat  space $M_0$ which is tangent to $W$. Set
\begin{equation}
M^n_m: = \Pi_{i \in I \setminus J} M_i \times \R^p. \label{decoma}
\end{equation}
Then $M^n = N \times  M^n_m$.  Let $\pi_m: M^n \to M^n_m$ be the natural projection. Clearly 
$\om^3 \in \pi^*(\Om^3(M^n_m))$.
Denote by $\bar \om_i^3$ the restriction of
$\om^3$ to $M_i$ which enter in the decomposition (\ref{decoma}). Since $G$ preserves $\om^3$, the subgroup $G_i$ preserves the 3-form $\bar\om^3_i$.

\begin{lemma}\label{3irr}  If $\bar \om^3_i$ is not zero  and $G_i$ acts irreducibly on $TM_i$ then $\bar \om_i^3$ is either a SL 3-form, or a $G_2$-form, or a multiple of a Cartan simple
form.
\end{lemma}

\begin{proof}  Note that  $G_i$ is  a  subgroup of the stabilizer of $\bar\om^3_i$.  First we inspect the list of all possible  irreducible Riemannian holonomy groups  in \cite[table 1, chapter 10]{Besse1987}, using the fact that the groups $U(n), SU(n)$ (resp. $Sp(n), Sp(1)Sp(n)$)  have no invariant nonzero 3-form on the space $\R^{2n}$, if $n\not = 3$ (resp. on the space $\R^{4n}$  for all $n$), as well as  $Spin(7)$ has no-invariant 3-form on $\R^8$,
to conclude  that $M_i$ must be a symmetric space of type I or IV, or a Calabi-Yau 6-manifold, or a $G_2$-manifold.  Using the  table of Poincare polynomials for symmetric spaces of type 1  in \cite{Takeuchi1962} we obtain that
the only  symmetric spaces of type 1 and  of dimension greater than equal 3  with non-trivial  Betti number $b_3$  are compact simple Lie groups.  This completes the proof of Lemma \ref{3irr}.
\end{proof}

Now we assume that $\bar \om ^3_i = 0$. 
We say that the 3-form $\om^3$ has {\it rank 2} on $M_i$, if for some $x\in M_i$  (and hence  for all $x \in M_i$)
the linear map $L^2_{\om^3}: \Lambda ^2 T_xM_i \to T^*_xM, v\wedge w \mapsto (v\wedge w) \rfloor \om^3$, has nonzero
image. 
We say that $\om^3$ has {\it rank 1} on $M_i$ if the image $L^2_{\om^3}$ is zero and  for some $x\in M_i$
(and hence for all $x\in M_i$)
the map $L_{\om^3}:  TM_i \to T^*M, v\mapsto v \rfloor \om^3$ has nonzero
image.  If both the maps $L^2_{\om^3}$ and $L_{\om^3}$ have trivial image, then  clearly $\om^3$  belongs to the
space $\Lambda ^3 (T^*(M_1 \times \cdots, _{\hat i} \cdots \times M_k))$.

\begin{lemma}\label{product} Assume that $\bar \om^3_i = 0$. Then the 3-form  $\om^3$ has   rank 2 on $M_i$ and   $M_i$ is a K\"ahler manifold provided with a K\"ahler 2-form $\om^2_i$. Moreover  there is a subspace $\R_i \subset \R^p$ provided with a constant 1-form $dx^i$ such that
the restriction of $\om^3$ to $M_i \times \R_i$ is  equal to $\lambda_{i} dx^i\wedge \om^2_i$  for some nonzero constant $\lambda_{i}$.
\end{lemma}

\begin{proof}  Assume that  $\om^3$ has rank 1 on $M_i$. 
Then there is  a vector $v\in T_xM_i$  and two vectors  $u, w \in T_xM_m^n$ such that $u, v$ are orthogonal to $T_xM_i$ and
$\om^3(v, u, w) = 1$. Since $u, v$ are orthogonal to $T_xM_i$ the 1-form $(u\wedge w) \rfloor \om^3$ is invariant under
the action of $G_i$.  Taking into account the irreducibility of the action of  $G_i$ on $T_xM_i$ we  obtain the first assertion of Lemma \ref{product}.  Thus $\om^3$ has rank 2 on $M_i$, if $\bar \om^3_i = 0$.
It follows that there is a vector $v\in T_xM^n_m$ such that $v$ is orthogonal to $T_xM_i$ and
$\om^3 (v, u, w) =1$ for some two vectors $u,w \in T_xM_i$. Repeating the previous argument, we conclude that
$v\in T_x\R^p$ and 
$G_i$ leaves  $\Om_i: =(u\rfloor\om^3)_{|T_xM_i}$ invariant. Since $G_i$ acts irreducibly on $T_xM_i$, the 2-form
$\Om_i$ has maximal rank.    We conclude that $M_i$ is a  K\"ahler manifold.  Denote by $\Lambda _i$ the bivector
in $\Lambda ^2 T_xM_i$ which is dual to $\Om_i$ with respect to the Riemannian metric. Since the K\"ahler form on $M_i$ is defined
uniquely up to a nonzero scalar multiple, the two-vector $\Lambda_i$ is defined uniquely up to a nonzero scalar multiple in the sense that $\Lambda_i$ does not depend on the  original vector $u$.  Now set $\hat u_i : = (\Lambda _i\rfloor \om ^3)_{|\R^p}$.  Let $\bar u_i \in T_x\R^p$ be the vector dual to $\hat u_i$ with respect to the given Riemannian metric. Then $\hat u_i (\bar u_i) = 1$. Clearly the  restriction of $\om^3$ to $M_i \times \la \bar u_i\ra _\R$ is a 3-form of the product type of maximal rank.  
This completes the proof of  Lemma \ref{product}.
\end{proof}

Let us complete the proof of Theorem \ref{3para}.   Set
$$\om^3_m: = \sum_i \pi_i^* (\bar \om^3_i) + \pi_0^* (\om^3_{|\R^p}),$$
where  $ i \in I\setminus J$ and $\pi_i: M\to M_i$, $\pi_0 : M \to \R^p$ are the  natural projections. 
Clearly  $\om ^3 -\om^3_m$ is also
invariant  under the action of $G$.
We note that the restriction of $\om^3-\om^3_m$ to each $M_i$ is zero, hence the rank of $\om^3 -\om^3_m$
to each $M_i$ has rank 2 by Lemma \ref{product}. Next we observe that the restriction of $\om^3 -\om^3_m$ to
$\R^p$ vanishes and it has rank 1, since in the opposite case, using the same argument as in the proof of Lemma \ref{product} we conclude that one of the space $M_i$, $i \in I\setminus J$, is flat, which contradicts our  assumption.

\begin{lemma}\label{final} We have $\om ^3-\om^3_m - \sum \Om^i \wedge \hat u_i = 0$.
\end{lemma}

\begin{proof} By our construction   the 3-form $\om ^3-\om^3_m - \sum_i \Om^i \wedge \hat u_i$ belongs  to the space 
$\pi_0 ^* ( \Om^3 (\R^p))$.  On the other hand, the restriction of $\om ^3-\om^3_m - \sum \Om^i \wedge \hat u_i$
to $\R^p$ is zero. This proves Lemma \ref{final}.
\end{proof}

 Clearly Lemma \ref{final}  completes the proof of Theorem \ref{3para}.
\end{proof}

\section* {Acknowledgement} 
This  note was partially supported by grant of ASCR Nr IAA100190701.   
The author is grateful to Sasha Elashvili, Willem Graaf, Todor Milev, Dmitri Panyushev,  Colleen   Robles, Andrew Swann, Ji\v ri
Van\v zura  for stimulating  and helpful discussions; especially  she is indebted to Ji\v ri
Van\v zura   for  turning  her interest  to this problem. She  thanks   anonymous referees for their   helpful remarks, which improved the  exposition  of this note and preventing the author to publish a wrong example.     She  thanks  Xiaobo Liu, Yoshihiro Ohnita, Hiroyuki Tasaki for  sending their reprints.  A part of this note has been written during the author stay at the ASSMS, GCU, Lahore-Pakistan.  She thanks ASSMS for their hospitality and financial support.
\medskip

\medskip

Institute of Mathematics of ASCR, Zitna 25, 11567 Praha 1, Czech Republic, hvle@math.cas.cz\\
\end{document}